\documentclass[12pt]{article}

\usepackage{amsmath,amssymb,amsthm}
\usepackage[pagebackref,colorlinks=true,allcolors=black,bookmarksopen,bookmarksdepth=3]{hyperref}
\usepackage[margin=1in]{geometry}
\usepackage{tikz}

\usepackage{enumitem}
\setitemize{label=$\bullet$,topsep=1pt,itemsep=1pt,partopsep=0pt,parsep=0pt}

\newtheorem{theorem}{Theorem}[section]
\newtheorem{lemma}[theorem]{Lemma}
\newtheorem{corollary}[theorem]{Corollary}
\newtheorem{proposition}[theorem]{Proposition}

\theoremstyle{definition}
\newtheorem{definition}[theorem]{Definition}

\theoremstyle{remark}
\newtheorem{remark}[theorem]{Remark}

\newtheorem{convention}[theorem]{Convention}

\numberwithin{equation}{section}

\newcommand{\F}{\mathfrak F}

\newcommand{\ZZ}{\mathbb Z}
\newcommand{\CC}{\mathbb C}
\newcommand{\RR}{\mathbb R}
\newcommand{\HH}{\mathbb H}
\newcommand{\sL}{\mathcal L}
\newcommand{\A}{\mathfrak A}
\newcommand{\B}{\mathfrak B}
\newcommand{\id}{\operatorname{id}}

\newcommand{\Hom}{\operatorname{Hom}}
\newcommand{\Con}{\operatorname{Con}}

\newcommand{\Diff}{\operatorname{Diff}}
\newcommand{\Isom}{\operatorname{Isom}}
\newcommand{\isom}{\operatorname{\mathfrak{isom}}}

\newcommand{\Teich}{\operatorname{Teich}}
\newcommand{\Homeq}{\operatorname{Homeq}}

\newcommand{\Stab}{\operatorname{Stab}}
\newcommand{\Nbd}{\operatorname{Nbd}}
\newcommand{\colim}{\operatornamewithlimits{colim}}
\newcommand{\acts}{\curvearrowright}
\newcommand{\PGL}{\operatorname{PGL}}
\newcommand{\GL}{\operatorname{GL}}
\mathchardef\mhyphen"2D
\newcommand{\free}{\mathrm{free}}
\newcommand{\refl}{\mathrm{refl}}
\newcommand{\trefl}{\mathrm{trefl}}
\newcommand{\tame}{\mathrm{tame}}
\newcommand{\wild}{\mathrm{wild}}

\newcommand{\Maps}{\operatorname{Maps}}
\newcommand{\Bun}{\operatorname{Bun}}
\newcommand{\tildetimes}{\mathbin{\tilde\times}}
\newcommand{\hacts}{\mathrel{\smash{\overset h\acts}}}
\newcommand{\hto}{\mathrel{\smash{\xrightarrow h}}}

\begin{document}

\title{Smoothing finite group actions on three-manifolds}

\author{John Pardon}

\date{March 25, 2020}

\maketitle

\begin{abstract}
We show that every continuous action of a finite group on a smooth three-manifold is a uniform limit of smooth actions.
%MSC 2010 Primary: 57M60, 57R10, 57S17, 57S10
%MSC 2010 Secondary: 55M35, 57S05, 57M30, 57M50
%Keywords: three-manifolds, group actions
\end{abstract}

\section{Introduction}

Every continuous finite group action on a manifold of dimension $\leq 2$ is conjugate to a smooth action \cite[pp340--341]{edmonds}\cite{constantinkolev,eilenberg,brouwer,kerekjarto}.
In contrast, there are many examples of finite group actions on three-manifolds which are not conjugate to smooth actions, see Bing \cite{binginvolution,bingperiodic}, Montgomery--Zippin \cite{montgomeryzippin}, and Alford \cite{alford}; all of these examples are defined as uniform limits of smooth actions.

In this paper, we show that every continuous action of a finite group on a smooth three-manifold is a uniform limit of smooth actions, answering an old question (see Edmonds \cite[p343]{edmonds}).
Recall that a neighborhood of an action $\varphi:G\acts M$ in the uniform topology (aka the strong $C^0$ topology) consists of those actions $\tilde\varphi:G\acts M$ such that $(\varphi(g)x,\tilde\varphi(g)x)\in U$ for every $(g,x)\in G\times M$, where $U\subseteq M\times M$ is a neighborhood of the diagonal.
Note that we do not assume that $M$ is compact.

\begin{theorem}\label{main}
Every continuous action $\varphi:G\acts M$ of a finite group on a smooth three-manifold is a uniform limit of smooth actions $\tilde\varphi:G\acts M$.
If $\varphi$ is smooth over $\Nbd K$ for $K\subseteq M$ closed and $\varphi(G)$-invariant, then we may take $\tilde\varphi=\varphi$ over $\Nbd K$.
\end{theorem}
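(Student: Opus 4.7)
The plan is to construct $\tilde\varphi$ as the limit of a sequence of smoothings, each defined on a larger invariant piece of $M$ and agreeing with its predecessors where already defined. Concretely, fix a $G$-invariant exhaustion $K = K_0 \subseteq K_1 \subseteq K_2 \subseteq \cdots$ of $M$ by compact sets. At stage $i$ one produces a conjugation of $\varphi$ by an ambient homeomorphism of $M$, close to the identity and supported in a small invariant neighborhood of $K_i \setminus \Nbd K_{i-1}$, yielding an action smooth on a neighborhood of $K_i$ and agreeing with the previous stage on a neighborhood of $K_{i-1}$. The rel-$K$ clause of the theorem (applied at each stage, with $K$ replaced by $\Nbd K_{i-1}$) is precisely what allows this induction to close; indeed, without it there would be no way to cleanly propagate smoothness outward.

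Near any point $x \in M$ with stabilizer $G_x$, an invariant neighborhood of $G \cdot x$ has the form $G \times_{G_x} V$ where $V$ is a $G_x$-invariant neighborhood of $x$. This reduces the local problem to smoothing a finite group action on a $3$-disk with a fixed point at the origin. By P.~A.~Smith theory and the Newman--Smith theorem, for each cyclic subgroup $\ZZ/p \subseteq G_x$ the fixed set $V^{\ZZ/p}$ is a $\ZZ/p$-cohomology submanifold of $V$ of codimension $1$ or $3$; however, it may be wildly embedded, as witnessed by the Bing and Alford involutions on $S^3$ cited above. The core local task is therefore to straighten $V^{G_x}$ to a smoothly embedded submanifold via a small equivariant ambient homeomorphism of $V$, after which a standard equivariant collaring argument produces a smooth local model of the action.

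The main obstacle is this equivariant taming of wildly embedded invariant submanifolds in a $3$-manifold. My plan is to combine classical approximation theorems of Bing and Moise, which produce small non-equivariant ambient homeomorphisms taming wild arcs, $1$-complexes, and $2$-spheres in $3$-manifolds, with a symmetrization procedure that averages such a taming homeomorphism over the finite group $G_x$ to restore equivariance; the subtlety is to control the error introduced by the averaging so that the resulting equivariant homeomorphism is still close to the identity. Once each local piece is smoothed, the local smoothings are spliced using an equivariant isotopy extension theorem, each splicing isotopy supported in an invariant neighborhood disjoint from the already-smoothed region, so that both the rel-$K$ condition and the earlier-stage smoothings are preserved. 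With the sizes of the corrections at each stage chosen to sum to something controllable in the uniform topology, passing to the limit as $i \to \infty$ yields a global homeomorphism of $M$ conjugating $\varphi$ to a smooth action $\tilde\varphi$ arbitrarily close to $\varphi$.
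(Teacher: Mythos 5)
Your outer framework — inducting over a $G$-invariant compact exhaustion, using the rel-$K$ clause to propagate smoothings, and splicing with equivariant isotopy extension — is a reasonable way to organize the argument, and the first step (reducing to a local model near each orbit) is sound. But the core of the proposal, the ``equivariant taming'' of the fixed locus, has two serious gaps.

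First, the averaging step you propose does not exist: there is no way to average ambient homeomorphisms of a $3$-manifold over a finite group, because $\Homeo(M)$ carries no linear or convex structure on which a finite group would act in a way admitting a barycenter. The paper does use an averaging argument (Proposition \ref{closeconjugate}), but only for \emph{smooth} actions already uniformly close to a fixed smooth action, and even there the averaging is carried out locally in charts where there \emph{is} a linear structure. A Bing--Moise taming homeomorphism is a priori just continuous, is not close to the identity in any $C^1$ sense near the wild set, and its $G_x$-conjugates do not live in any space where they can be averaged. So ``restore equivariance by symmetrization'' cannot be made to work by any small perturbation of this idea.

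Second, and more fundamentally, the strategy of straightening $V^{G_x}$ by a small ambient homeomorphism and then collaring works only for the $2$-dimensional part of the fixed set. For surfaces one has both Bing's taming theorem (Theorem \ref{tamereembedding}) \emph{and} Craggs' uniqueness-up-to-small-isotopy (Theorem \ref{tamereembeddingunique}), and the paper uses exactly this pair to tame $F^\refl$ locally away from a $1$-skeleton (Proposition \ref{tamerefl}). There is no analogue of Craggs for wild arcs or wild isolated fixed points: the tame approximations of a wildly embedded arc need not be ambiently isotopic to each other by small isotopies, and the Montgomery--Zippin and Bing examples show that the local structure near a $0$- or $1$-dimensional wild fixed set cannot be ``straightened.'' (Incidentally, your ``codimension $1$ or $3$'' is also incomplete: Smith theory gives codimension $2$ fixed sets, i.e.\ fixed circles and arcs, for all primes, and these are precisely the hard case.) What the paper actually does with the $0$- and $1$-dimensional bad locus is completely different from taming it. It exploits the fact that this locus has covering dimension $\leq 1$, covers a closed neighborhood $M_0$ of it by small $G$-equivariantly permuted open sets with no triple intersections, produces properly embedded incompressible surfaces in the pairwise intersections that are $G$-invariant up to isotopy (via a lattice of incompressible surfaces, using the fixed-point property of finite groups acting on lattices), cuts $M_0$ into pieces along these surfaces, and then upgrades the resulting homotopy action to a strict smooth action using the JSJ decomposition, Thurston's hyperbolization, Mostow--Prasad--Marden rigidity, and Kerckhoff's Nielsen realization. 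None of this machinery, or a substitute for it, appears in the proposal, and it is exactly where the main content of the theorem lies.
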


\begin{remark}
In higher dimensions, there exist (even free) finite group actions which are not uniformly approximable by smooth actions.
Indeed, let $M$ be a topological manifold of dimension $\geq 5$ which admits no smooth structure but which admits a finite cover $\widetilde M\to M$ with $\widetilde M$ admiting a smooth structure (e.g.\ $M$ could be a non-smoothable fake real projective space \cite{lopez}).
By passing to a further cover, we may assume that $\widetilde M\to M$ is Galois, say with Galois group $G$.
Choosing arbitrarily a smooth structure on $\widetilde M$, we claim that the continuous free action $\varphi:G\acts\widetilde M$ is not uniformly approximable by smooth actions.
Indeed, for any action $\varphi':G\acts\widetilde M$ sufficiently uniformly close to $\varphi$, the quotients $M=\widetilde M/\varphi(G)$ and $\widetilde M/\varphi'(G)$ are homeomorphic by the Chapman--Ferry $\alpha$-approximation theorem \cite{chapmanferry} (we thank Mladen Bestvina for pointing this out to us).
Hence $\varphi'$ cannot be smooth, as otherwise we would obtain a smooth structure on $M$, which we have assumed does not exist.
\end{remark}

We now sketch the proof of Theorem \ref{main}, starting with the case of free actions.
If $\varphi:G\acts M$ is free, then the quotient space $M/G$ is a topological manifold.
By Bing and Moise, there exists a smooth structure on $M/G$, which we can pull back to a smooth structure on $M$ (call it $M^s$) with respect to which $\varphi$ is smooth.
Now the identity map $\id:M\to M^s$ is a homeomorphism between smooth three-manifolds, and Bing and Moise tell us that any homeomorphism between smooth three-manifolds can be uniformly approximated by diffeomorphisms.
Denoting by $\alpha:M\to M^s$ such a diffeomorphism, we conclude that the conjugated action $\alpha^{-1}\varphi\alpha:G\acts M$ is smooth and uniformly close to $\varphi$.
In fact, this reasoning shows moreover that any action $\varphi:G\acts M$ can be smoothed over the (necessarily open) locus where it is free.

To treat more general actions $\varphi:G\acts M$, we need some understanding of which subsets of $M$ can occur as the fixed points of the action of $G$ or of one of its subgroups.
Smith theory concerns precisely this question, and provides that for any homeomorphism $g$ of prime order $p$ of a topological three-manifold $M$, the fixed set $M^g$ is a topological manifold (of possibly varying dimension and possibly wildly embedded inside $M$).
Writing
\begin{equation}
M^g=M^g_{(0)}\sqcup M^g_{(1)}\sqcup M^g_{(2)}\sqcup M^g_{(3)}
\end{equation}
for the decomposition of $M^g$ by dimension, we furthermore have that $M^g_{(2)}$ can be non-empty only when $p=2$ and $g$ reverses orientation near $M^g_{(2)}$.

The proof of Theorem \ref{main} now proceeds in three steps which smooth a given action $\varphi:G\acts M$ over successively larger open subsets of $M$.
We may assume without loss of generality that $\varphi:G\acts M$ is \emph{generically free}, namely no nontrivial element $g\in G$ acts as the identity on a nonempty open subset of $M$ or, equivalently, $M^g_{(3)}=\varnothing$ for every prime order $g\in G$.

The first step is to smooth the action over the open set $M^\free\subseteq M$ where it is free.
As discussed above, this is a straightforward application of the smoothing theory for homeomorphisms of three-manifolds due to Bing and Moise.

The second step is to smooth the action over the open set $M^\refl\subseteq M$ defined as the set of points $x$ whose stabilizer $G_x$ is either trivial or of order two, generated by an involution $g$ for which $x\in M^g_{(2)}$.
Smoothing an involution fixing a surface is essentially due to Craggs \cite{craggs}.
The main point is that any (possibly wildly) embedded surface (in particular, $F^\refl:=\{x\in M^\refl\,|\,G_x=\ZZ/2\}$) in a three-manifold can be approximated uniformly by tamely embedded surfaces (due to Bing) and that such approximations are unique up to small isotopy (due to Craggs).

The third and final step (which constitutes the main content of this paper) is to smooth the action over the remainder
\begin{equation}\label{badlocus}
M\setminus M^\refl=\bigcup_{\begin{smallmatrix}g\in G\setminus\{1\}\\g^p=1\end{smallmatrix}}M^g_{(0)}\cup M^g_{(1)}.
\end{equation}
Since this locus is a union of $0$- and $1$-dimensional manifolds (possibly wildly) embedded in $M$, it has covering dimension $\leq 1$, and this will be crucial to our argument.
We consider a small closed $G$-invariant neighborhood $M_0$ of \eqref{badlocus} with smooth boundary, and we fix a $G$-equivariant finite open cover $M_0=\bigcup_iU_i$ by small open sets $U_i$ (possibly permuted by the action of $G$) such that all triple intersections are empty ($U_i\cap U_j\cap U_k=\varnothing$ for distinct $i,j,k$).
We now find properly embedded incompressible surfaces $F_{ij}\subseteq U_i\cap U_j$ (separating the $U_i$ end and the $U_j$ end) which are $G$-invariant up to isotopy.
The construction of such surfaces uses the ``lattice of incompressible surfaces'' from \cite[\S 2]{pardonhilbertsmith} and the elementary fact that a finite group acting on a nonempty lattice always has a fixed point (take the least upper bound of any orbit).
These surfaces $F_{ij}$ divide $M_0$ into pieces $N_i$ (each a compact three-manifold-with-boundary), and $G$ acts on $\bigsqcup_iN_i$ up to homotopy.
Finally, we note that these homotopy actions can be upgraded to strict actions (by diffeomorphisms) by appealing to the JSJ decomposition, the existence of hyperbolic structures due to Thurston, the rigidity results of Mostow, Prasad, and Marden, and the solution to the Nielsen realization problem for surfaces by Kerckhoff.
The resulting smooth action of $G$ on $M$ can be made arbitrarily close to the original action in the uniform topology by taking the neighborhood $M_0$ and the open sets $U_i$ to be sufficiently small.

\begin{remark}
We work throughout this paper in the smooth category unless the contrary is explicitly stated (as in `topological manifold' or `continuous action'), however one could just as well work instead in the piecewise-linear category.
In particular, Theorem \ref{main} is equivalent to the corresponding statement in the piecewise-linear category.
\end{remark}

\begin{convention}
Manifold means Hausdorff, locally Euclidean, and paracompact.
\end{convention}

\subsection{Acknowledgements}

This paper owes much to the work of the two anonymous referees, both of whose comments contributed significantly to its accuracy and clarity.
The author gratefully acknowledges helpful communication with Ian Agol, Mladen Bestvina, Martin Bridson, Dave Gabai, and Shmuel Weinberger.

This research was conducted during the period the author served as a Clay Research Fellow.  The author was also partially supported by a Packard Fellowship and by the National Science Foundation under the Alan T.\ Waterman Award, Grant No.\ 1747553.

\section{Nielsen realization for some three-manifolds}\label{secnielsen}

This section collects various known results in three-manifold topology.
Specifically, we study the problem of upgrading a homotopy action on a three-manifold to a genuine action.
Conditions under which this is possible are well-known due to work of Jaco--Shalen \cite{jacoshalen}, Johannson \cite{johannson}, Thurston \cite{morgan}, Mostow \cite{mostow}, Prasad \cite{prasad}, Marden \cite{marden}, Kerckhoff \cite{kerckhoff}, Zimmermann \cite{zimmermannnielsen}, and Heil--Tollefson \cite{heiltollefsonII,heiltollefsonIII}.
We include this section mainly to make this paper self-contained, as we were unable to find the exact statement we need in the literature; in particular, we do not want to restrict to three-manifolds with incompressible boundary.

\subsection{Groups of diffeomorphisms and homotopy equivalences}

For a compact manifold-with-boundary $M$, we denote by $\Diff(M)$ the group of diffeomorphisms of $M$, and we denote by $\Diff(M,\partial M)$ the subgroup of those diffeomorphisms which are the identity over $\partial M$.
There is an exact sequence
\begin{equation}\label{diffses}
1\to\Diff(M,\partial M)\to\Diff(M)\to\Diff(\partial M)
\end{equation}
with the image of the final map being a union of connected components.
Similarly, we denote by $\Homeq(M)$ the monoid of self homotopy equivalences of the pair $(M,\partial M)$, and we define $\Homeq(M,\partial M)$ by the exact sequence
\begin{equation}\label{homeqses}
1\to\Homeq(M,\partial M)\to\Homeq(M)\to\Homeq(\partial M)
\end{equation}
Both \eqref{diffses} and \eqref{homeqses} induce long exact sequences of homotopy groups (with the caveat that the very final map on $\pi_0$ need not be surjective).
There is an obvious forgetful map from \eqref{diffses} to \eqref{homeqses}, which induces a map between the associated long exact sequences.
Note that $\Homeq(M)$ does \emph{not} denote the monoid of self homotopy equivalences of $M$, which is instead homotopy equivalent to $\Homeq(M^\circ)$.

Most of the spaces we will consider here are (disjoint unions of) $K(\pi,1)$ spaces, so for future use we record here the straightforward fact that the space of maps between two such spaces admits a natural group theoretic description.

\begin{lemma}\label{mapskpione}
The components of $\Maps(K(\pi,1),K(\pi',1))$ are indexed by the orbits of the conjugation action $\pi'\acts\Hom(\pi,\pi')$, and the component of a given $f:\pi\to\pi'$ is $K(Z_{\pi'}(f(\pi)),1)$ where $Z_G(H)$ denotes the centralizer of the subgroup $H\leq G$.
\qed
\end{lemma}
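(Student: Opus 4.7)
The plan is to analyze the evaluation fibration
\[
\Maps_*(K(\pi,1),K(\pi',1))\to\Maps(K(\pi,1),K(\pi',1))\to K(\pi',1)
\]
given by evaluation at the basepoint.

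First, I would show that the based mapping space is weakly equivalent to the discrete set $\Hom(\pi,\pi')$. By obstruction theory, a based map $K(\pi,1)\to K(\pi',1)$ is determined up to based homotopy by its induced map on $\pi_1$, since $K(\pi',1)$ has no higher homotopy; this gives $\pi_0(\Maps_*)=\Hom(\pi,\pi')$. The same obstruction-theoretic argument (or equivalently the computation $\pi_n(\Maps_*(X,Y),f)=\pi_0(\Maps_*(X,\Omega^n Y))$ combined with the observation that $\Omega K(\pi',1)$ is weakly equivalent to the discrete group $\pi'$ while $\Omega^n K(\pi',1)$ is weakly contractible for $n\geq 2$) shows that each based component is weakly contractible.

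The two claims then follow from the long exact sequence of the evaluation fibration. The monodromy of $\pi_1(K(\pi',1))=\pi'$ on $\pi_0(\Maps_*)=\Hom(\pi,\pi')$ is the conjugation action; one verifies this by constructing, for each $\gamma\in\pi'$, an explicit loop of based maps implementing conjugation by $\gamma$. Consequently $\pi_0$ of the total space is the orbit set $\Hom(\pi,\pi')/\pi'$, giving the first statement. Restricting to the connected component $T_f$ containing a given $f$, the induced fibration $T_f\to K(\pi',1)$ has discrete fiber the $\pi'$-orbit of $f$, namely $\pi'/Z_{\pi'}(f(\pi))$. The long exact sequence then yields $\pi_n(T_f)=0$ for $n\geq 2$ together with
\[
0\to\pi_1(T_f)\to\pi'\to\pi'/Z_{\pi'}(f(\pi))\to 0,
\]
so $\pi_1(T_f)=Z_{\pi'}(f(\pi))$, and hence $T_f\simeq K(Z_{\pi'}(f(\pi)),1)$.

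The only nontrivial input beyond the mechanical long exact sequence is the identification of the monodromy of the evaluation fibration with conjugation, which is a standard fact but worth double-checking by direct inspection. Everything else is straightforward obstruction theory combined with the vanishing of higher homotopy of $K(\pi',1)$.
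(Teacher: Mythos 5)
Your derivation is correct, and it is the standard proof via the evaluation fibration; the paper records this lemma as an elementary known fact with no proof attached, so there is no argument in the source to compare against. One technical point worth tightening: the parenthetical identity $\pi_n(\Maps_*(X,Y),f)=\pi_0(\Maps_*(X,\Omega^n Y))$ is literally valid when $f$ is the constant map (where the adjunction $\Omega\Maps_*(X,Y)=\Maps_*(X,\Omega Y)$ holds on the nose), but for an arbitrary component $f$ it is not clear that the based loop space at $f$ is controlled by $\Maps_*(X,\Omega Y)$; you should rely on the obstruction-theoretic argument you state first (e.g.\ attaching cells of $K(\pi,1)$ one at a time and noting that the resulting fibers $\Omega^{k}K(\pi',1)$ are weakly discrete or contractible), which does establish weak contractibility of every based component. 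With that understood, the remaining steps — identifying the $\pi_1(K(\pi',1))$-monodromy on $\pi_0(\Maps_*)=\Hom(\pi,\pi')$ with conjugation, reading off $\pi_0$ of the total space as the orbit set, and extracting $\pi_1(T_f)=Z_{\pi'}(f(\pi))$ with vanishing higher homotopy from the long exact sequence of $\pi'/Z_{\pi'}(f(\pi))\to T_f\to K(\pi',1)$ — are all correct and give exactly the stated conclusion.
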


More succinctly, $\Maps(K(\pi,1),K(\pi',1))$ is the homotopy quotient of $\Hom(\pi,\pi')$ by the conjugation action of $\pi'$.

\subsection{Homotopy group actions}

Let $G$ be a finite group.
A (strict) action $\varphi:G\acts M$ is simply a homomorphism $\varphi:G\to\Diff(M)$.
A(n often much) weaker notion is that of a homomorphism $\varphi:G\to\pi_0\Diff(M)$ or to $\pi_0\Homeq(M)$.
In this paper, the intermediate notion of a `homotopy action' $G\hacts M$ or a `homotopy homomorphism' $G\hto\Diff(M)$ (or to $\Homeq(M)$) will play an important role.

For any topological monoid $A$ (such as $\Diff(M)$ or $\Homeq(M)$), a \emph{homotopy homomorphism} $\varphi:G\hto A$ is, by definition, a collection of maps $\varphi^k:G^{k+1}\times[0,1]^k\to A$ for all $k\geq 0$ satisfying
\begin{align}
\varphi^k(g_0,\ldots,g_k)_{[0,1]^i\times\{1\}\times[0,1]^{k-i-1}}&=\varphi^{k-1}(g_0,\ldots,g_ig_{i+1},\ldots,g_k)\\
\varphi^k(g_0,\ldots,g_k)_{[0,1]^i\times\{0\}\times[0,1]^{k-i-1}}&=\varphi^i(g_0,\ldots,g_i)\circ\varphi^{k-i-1}(g_{i+1},\ldots,g_k)
\end{align}
for $0\leq i<k$ (compare Sugawara \cite[\S 2]{sugawara} and Boardman--Vogt \cite[Definition 1.14]{boardmanvogt}).
Note the lack of any condition on the value of $\varphi^k$ when one of the inputs is the identity.

A \emph{homotopy action by diffeomorphisms} (resp.\ \emph{homotopy equivalences}) $G\hacts M$ shall mean a homotopy homomorphism $G\hto\Diff(M)$ (resp.\ $\Homeq(M)$).
We will often shorten this to `homotopy action' when it is either clear from context or irrelevant whether we mean by diffeomorphisms or by homotopy equivalences.

We endow the spaces of strict actions $\Hom(G,A)\subseteq\Maps(G,A)$ and homotopy actions $\Hom^h(G,A)\subseteq\prod_{k\geq 0}\Maps(G^{k+1}\times[0,1]^k,A)$ both with the subspace topology.
Note that if a map of topological monoids $A\to B$ is a homotopy equivalence (of spaces), then the induced map $\Hom^h(G,A)\to\Hom^h(G,B)$ is a homotopy equivalence.

There is an evident inclusion
\begin{equation}\label{strictintohomotopy}
\Hom(G,A)\hookrightarrow\Hom^h(G,A)
\end{equation}
of strict homomorphisms into homotopy homomorphisms, by taking $\varphi^k$ to be independent of the $[0,1]^k$ factor for all $k$.
Our main aim in this section is to show that in many cases, this map is a homotopy equivalence, or at least surjective on connected components.
(We will allow ourselves to not worry too much about the distinction between weak homotopy equivalence and homotopy equivalence in our discussion.)

There is another perspective on strict and homotopy actions which we will frequently take advantage of.
Let $BG=K(G,1)$ denote the classifying space of the finite group $G$, namely $BG$ is a connected space with a basepoint $\ast\in BG$ with $\pi_1(BG,\ast)=G$ (a specified isomorphism) and $\pi_i(BG,\ast)=0$ for $i>1$.
Now the data of a homotopy action by diffeomorphisms $G\hacts M$ is equivalent (as we are about to see) to the data of a bundle over $BG$ together with an identification of the fiber over the basepoint with $M$ (by this we mean that $\Hom^h(G,\Diff(M))$ is homotopy equivalent to the classifying space of such bundles), and a strict action $G\acts M$ is such a bundle equipped with a flat connection.
\emph{The problem of upgrading a homotopy action to a strict action may thus be viewed as the problem of constructing a flat connection on a given bundle over $BG$ with fiber $M$.}

To make this discussion precise, let us fix the following model of $BG$ (the specific choice of model is, of course, ultimately irrelevant).
Let $EG$ be the complete semi-simplicial complex on the vertex set $G$, i.e.\ the space $EG$ is built by gluing together $k$-simplices indexed by ordered $(k+1)$-tuples of elements of $G$ (which are the vertices of the simplex) via the obvious maps forgetting vertices (note that we do not collapse any `degenerate simplices').
This space $EG$ is contractible, and it carries a free action of $G$ by multiplication on the left.
The $k$-simplices of the quotient $BG=EG/G$ are thus indexed by $k$-tuples of elements of $G$, and the faces of the $k$-simplex $(g_1,\ldots,g_k)$ are given by $(g_2,\ldots,g_k),(g_1g_2,g_3,\ldots,g_k),(g_1,g_2g_3,g_4,\ldots,g_k),\ldots,(g_1,\ldots,g_{k-2},g_{k-1}g_k),(g_1,\ldots,g_{k-1})$.

In order to discuss bundles over $BG$, fix for every face inclusion $i:\Delta^k\hookrightarrow\Delta^n$ a germ of a retraction $r(i):\Delta^n\to\Delta^k$, such that $r(i\circ j)=r(j)\circ r(i)$.
Such retractions may be constructed by induction (the key fact used in this induction is that any smooth real valued function on the boundary of a manifold-with-corners admits smooth extensions to the interior).
Bundles over $BG$ are now defined via transition functions which are pulled back under these retractions on a neighborhood of the boundary of every simplex; the same pullback condition is also imposed on connections.

Let us denote by $\Bun((BG,*),\Diff(M))$ the space of bundles over $BG$ with marked fiber $M$ over the basepoint, equipped with a connection (up to isomorphism respecting the marking and the connection).
This space $\Bun((BG,*),\Diff(M))$ classifies families of bundles over $BG$ with marked fiber $M$ over the basepoint (with or without a connection, as a connection is a contractible choice).
The subspace of $\Bun((BG,*),\Diff(M))$ consisting of those bundles whose connection is flat is \emph{equal} to $\Hom(G,\Diff(M))$.

To compare $\Bun((BG,*),\Diff(M))$ with $\Hom^h(G,\Diff(M))$, let us recall the Adams family of paths \cite{adamscobar}.
This is, for every $k\geq 0$, a map $\gamma^k$ from the cube $[0,1]^k$ to the space of paths in $\Delta^{k+1}$ from vertex $0$ to vertex $k+1$.
These families of paths $\gamma^k$ have the following concatenation/restriction properties for $0\leq i<k$: (1) $\gamma^k|_{[0,1]^i\times\{1\}\times[0,1]^{k-i-1}}$ is the pushforward of $\gamma^{k-1}$ under the inclusion $\Delta^k\to\Delta^{k+1}$ missing vertex $i+1$, and (2) $\gamma^k|_{[0,1]^i\times\{0\}\times[0,1]^{k-i-1}}$ is the concatenation of $\gamma^i$ and $\gamma^{k-i-1}$, viewed as paths from vertex $0$ to vertex $i+1$ contained in the initial $\Delta^{i+1}\subseteq\Delta^{k+1}$ and paths from vertex $i+1$ to vertex $k+1$ contained in the final $\Delta^{k-i}\subseteq\Delta^{k+1}$, respectively.
These compatibility properties immediately imply that integrating connections along the Adams family of paths defines a map
\begin{equation}
\Bun((BG,*),\Diff(M))\to\Hom^h(G,\Diff(M)).
\end{equation}
To see that this map is a homotopy equivalence, we can show by induction on $k\geq 0$ that the analogous map from bundles over the $(k+1)$-skeleton of $BG$ to homotopy actions defined up to level $k$ is a homotopy equivalence.
When we increment $k$, the domain and codomain of this map are replaced by the total spaces of fibrations over them with fiber either empty or $\Omega^k\Diff(M)$ (the $k$-fold based loop space).
Analyzing the map on fibers explicitly, one sees that it is indeed a homotopy equivalence.

\subsection{Local stability of strict actions}

When studying families of strict actions, the following local stability result is fundamental.

\begin{proposition}\label{closeconjugate}
Fix a strict action $\varphi:G\acts M$.
For every strict action $\varphi':G\acts M$ sufficiently close to $\varphi$ in the smooth topology, there exists a diffeomorphism $\rho$ of $M$ such that $\varphi'=\rho^{-1}\varphi\rho$.
Moreover, $\rho$ may be taken to depend smoothly on $\varphi'$.
\end{proposition}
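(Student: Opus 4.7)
The plan is to construct $\rho$ by a $G$-equivariant averaging procedure, using the Riemannian center-of-mass construction with respect to a $\varphi$-invariant metric on $M$.

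First I would fix a smooth Riemannian metric on $M$ and average it over $G$ via $\varphi$ to obtain a metric $g_\varphi$ with respect to which every $\varphi(g)$ is an isometry. For $\varphi'$ sufficiently $C^1$-close to $\varphi$, each $\varphi(g)^{-1}\varphi'(g)$ is uniformly $C^1$-close to the identity, so for every $x\in M$ the finite set $\{\varphi(g)^{-1}\varphi'(g)\cdot x\}_{g\in G}$ lies inside a small geodesically convex ball around $x$ (below the injectivity radius and below the threshold where the squared-distance function is strictly convex, which is uniform on compact $M$).

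Then I would define
\begin{equation}
\rho(x) := \operatornamewithlimits{argmin}_{y\in M}\sum_{g\in G}d_{g_\varphi}\!\bigl(y,\;\varphi(g)^{-1}\varphi'(g)\cdot x\bigr)^2,
\end{equation}
the Grove--Karcher Riemannian barycenter of the averaged family. The implicit function theorem applied to the critical-point equation shows that $\rho$ is smooth and depends smoothly on the input data, hence smoothly on $\varphi'$. Since every $\varphi(h)$ is an isometry and the barycenter is characterized intrinsically by the distance function, one computes
\begin{equation}
\rho(\varphi'(h)x)=\operatorname{centroid}_{g}\bigl(\varphi(g)^{-1}\varphi'(gh)x\bigr)=\operatorname{centroid}_{g'}\bigl(\varphi(h)\varphi(g')^{-1}\varphi'(g')x\bigr)=\varphi(h)\rho(x),
\end{equation}
where I reindexed by $g'=gh$ and pulled the isometry $\varphi(h)$ out of the centroid. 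This gives $\rho\circ\varphi'(h)=\varphi(h)\circ\rho$, equivalently $\varphi'=\rho^{-1}\varphi\rho$.

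Finally, I would verify that $\rho$ is a diffeomorphism. By construction $\rho$ is $C^0$-close to $\id_M$ (when $\varphi'$ is $C^0$-close to $\varphi$) and $d\rho$ is close to the identity (when $\varphi'$ is $C^1$-close to $\varphi$), so $\rho$ is a local diffeomorphism $C^0$-close to the identity, hence a diffeomorphism on the compact $M$ in question. The step I expect to be most delicate is the uniform size of the geodesic ball in which the barycenter is defined: one must take $\varphi'$ close enough to $\varphi$ (uniformly in $g$) that the entire $G$-family $\{\varphi(g)^{-1}\varphi'(g)x\}_g$ stays in a Karcher-admissible ball around $x$ for every $x\in M$. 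On the compact manifolds-with-boundary considered in this section this uniformity is automatic, but on noncompact $M$ it would require a proper choice of the smooth topology on the space of actions.
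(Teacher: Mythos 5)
Your proof is correct, and it takes a genuinely different route from the paper's, while being animated by the same underlying idea: correct the identity map $M_{\varphi'}\to M_\varphi$ to a $G$-equivariant map by averaging the translates $\{\varphi(g)^{-1}\varphi'(g)x\}_{g\in G}$.

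The paper accomplishes the averaging \emph{locally} and \emph{linearly}: it covers $M_\varphi$ by locally linear charts $u_i:G\times_{H_i}\RR^{n_i}\to M_\varphi$, fixes an invariant partition-of-unity-like family $\eta_i$, defines a chartwise averaging operator $A_if$ that performs the linear average only where $\eta_i\equiv 1$, and takes $\rho$ to be the infinite composition $(\cdots\circ A_2\circ A_1)\id$, which is well-defined thanks to local finiteness of the cover. You instead perform the averaging \emph{globally} and \emph{intrinsically} via the Grove--Karcher Riemannian barycenter with respect to a $\varphi$-invariant metric. Your equivariance computation is right: reindex $g'=gh$, use that $\varphi(h)$ is a $g_\varphi$-isometry, and pull it through the argmin. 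The implicit-function-theorem argument for smoothness and smooth dependence on $\varphi'$ is also sound, and closeness in $C^1$ gives that $\rho$ is a local diffeomorphism; combined with $C^0$-closeness to $\id_M$ it is a diffeomorphism. The trade-off between the two approaches: your construction is one-shot and conceptually cleaner, but it invokes the convexity package for Riemannian barycenters, and on noncompact $M$ you must take the strong (Whitney) smooth topology and allow the admissible-ball radius to vary with $x$ (which is exactly what the strong topology buys you, so this is a fine print matter rather than a gap). The paper's iterated local linear averaging is more elementary and handles noncompactness and corners/boundary automatically, since it only ever uses the vector-space structure of each chart, not global geodesic convexity. Both methods produce $\rho$ depending smoothly on $\varphi'$, as required. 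One point worth being explicit about (which applies to both proofs but a bit more to yours): when $\partial M\ne\varnothing$ one should either double along the boundary or use a product collar so that the barycenter stays inside $M$; the paper's chart-based averaging sidesteps this because its local models already incorporate the boundary.
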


\begin{proof}
The strategy is to consider the identity map $M_{\varphi'}\to M_\varphi$ and try to correct it to make it $G$-equivariant, thus giving the desired $\rho$.
(Here $M_\varphi$ indicates $M$ equipped with the $G$-action $\varphi$, and the same for $M_{\varphi'}$.)
The obvious way to correct a map to be equivariant is by averaging, though of course this does not make sense \emph{a priori} since $M$ does not have any sort of linear structure.
We can, however, choose an atlas of local charts on $M$, each of which has linear structure, and then do our averaging locally in each chart.
Here are the details.

To begin with, recall that every strict action $\varphi:G\acts M$ is locally linear, in the sense that $M$ can be covered by $G$-equivariant open inclusions $G\times_H\RR^n\to M_\varphi$ with $H\leq G$ and $H$ acting linearly on $\RR^n$.
Indeed, to construct such a chart near a point $p\in M$, start with any locally defined map $(M,p)\to(T_pM,0)$ whose derivative at $p$ is the identity, and average it under the action of the stabilizer group $G_p$ to make it $G_p$-equivariant.

Now fix a cover of $M_\varphi$ by charts $u_i:G\times_{H_i}\RR^{n_i}\to M_\varphi$.
For each such chart, choose a $\varphi(G)$-invariant function $\eta_i:M_\varphi\to\RR_{\geq 0}$ supported inside the image of $u_i$, such that every $x\in M$ has a neighborhood over which some $\eta_i$ is $\equiv 1$ and only finitely many $\eta_i$ are not $\equiv 0$.

Now for $\varphi'$ sufficiently uniformly close to $\varphi$ and any function $f:M_{\varphi'}\to M_\varphi$ sufficiently uniformly close to the identity, we may define the $i$th averaging $A_if:M_{\varphi'}\to M_\varphi$ by the formula
\begin{equation}
(A_if)(x)=(1-\eta_i(x))\cdot f(x)+\eta_i(x)\cdot\frac 1{\left|G\right|}\sum_{g\in G}\varphi(g^{-1})f(\varphi'(g)x)
\end{equation}
for $x\in u_i(G\times_H\RR^n)$ and $(A_if)(x)=f(x)$ otherwise.
Note that $A_if$ is $G$-equivariant at any $x\in M_{\varphi'}$ at which either $f$ is $G$-equivariant or $\eta_i(\varphi'(G)x)=1$.

Now for $\varphi'$ sufficiently uniformly close to $\varphi$, the infinite composition $\rho:=(\cdots\circ A_2\circ A_1)\id:M_{\varphi'}\to M_\varphi$ is defined and $G$-equivariant.
If $\varphi'$ is sufficiently smoothly close to $\varphi$, this $\rho$ is a diffeomorphism.
\end{proof}

\begin{corollary}\label{stricthascollar}
For any strict action $\varphi:G\acts M$ on a manifold-with-boundary with $\partial M$ compact, there exists a $\varphi(G)$-equivariant boundary collar $\partial M\times[0,\varepsilon)\hookrightarrow M$.
\end{corollary}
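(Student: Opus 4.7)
The plan is to produce the equivariant collar by an averaging argument on an inward-pointing vector field. Since $\varphi$ is a strict (hence smooth) action, $\partial M$ is a smoothly embedded compact submanifold preserved by $\varphi(G)$, so the ordinary collar neighborhood theorem supplies some smooth embedding $c:\partial M\times[0,\delta)\hookrightarrow M$ (not yet equivariant). From $c$ we obtain a smooth inward-pointing vector field $X$ defined on a neighborhood $U$ of $\partial M$, namely the pushforward of $\partial/\partial t$, extended smoothly (and arbitrarily) off the image of $c$ inside some slightly smaller neighborhood using a cutoff.

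Next I would average: shrinking $U$ to a $\varphi(G)$-invariant neighborhood of $\partial M$ on which every $\varphi(g)_*X$ is defined, set
\begin{equation}
\tilde X:=\frac1{\left|G\right|}\sum_{g\in G}\varphi(g)_*X.
\end{equation}
By construction $\tilde X$ is smooth and $\varphi(G)$-invariant. Since $\varphi(g)$ is a diffeomorphism of $M$ preserving $\partial M$, each $\varphi(g)_*X$ is still inward-pointing at every point of $\partial M$, and convex combinations of inward-pointing vectors are inward-pointing, so $\tilde X$ is inward-pointing along $\partial M$.

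Finally I would flow. Because $\partial M$ is compact, the flow of $\tilde X$ exists on $\partial M\times[0,\varepsilon)$ for some $\varepsilon>0$, and for $\varepsilon$ sufficiently small the resulting smooth map
\begin{equation}
\Phi:\partial M\times[0,\varepsilon)\to M,\qquad (x,t)\mapsto\Phi_t(x)
\end{equation}
is an embedding: at $t=0$ it is the inclusion of $\partial M$, its derivative in the $t$-direction is the inward vector $\tilde X$ which is transverse to $\partial M$, so by the inverse function theorem together with compactness of $\partial M$ it is injective on a collar of uniform width. Equivariance $\Phi_t\circ\varphi(g)|_{\partial M}=\varphi(g)\circ\Phi_t$ is immediate from the $\varphi(G)$-invariance of $\tilde X$. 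The only point requiring care is this step of promoting local injectivity to injectivity on a uniform strip, which is where compactness of $\partial M$ (assumed in the statement) is used; everything else is a formal consequence of smoothness of $\varphi$ and the finiteness of $G$.
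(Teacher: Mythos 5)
Your proof is correct, but it takes a genuinely different route from the paper's. You use the classical equivariant collar argument: average an inward-pointing vector field over the (finite) group action, then take the time-$t$ flow. The paper instead averages a defining function $r:M\to\RR_{\geq 0}$ vanishing transversely on $\partial M$, uses the resulting $\varphi(G)$-invariant submersion near $\partial M$ to write the action as a one-parameter family of strict actions $\psi_t:G\acts\partial M$ on the level sets, and then invokes Proposition~\ref{closeconjugate} (local stability of strict actions) to produce diffeomorphisms $\rho_t$ with $\psi_t=\rho_t^{-1}\psi_0\rho_t$; precomposing the collar by $(x,t)\mapsto(\rho_t^{-1}(x),t)$ straightens it. Your approach is more elementary and self-contained: it avoids Proposition~\ref{closeconjugate} entirely, relying only on the convexity of the cone of inward-pointing vectors and existence/uniqueness for ODEs. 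The paper's approach, on the other hand, is tailored to the framework already built around Proposition~\ref{closeconjugate} and makes explicit the role of local stability, which recurs throughout \S 2. Both constructions are canonical up to contractible choice (in your case because the space of inward-pointing vector fields near $\partial M$ is convex, hence contractible, and averaging and flowing are canonical operations), so either one serves for the later use of Corollary~\ref{stricthascollar} in the proof of Corollary~\ref{surfacerelboundary}, where "well defined up to contractible choice" is needed.
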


\begin{proof}
Choose any function $r:M\to\RR_{\geq 0}$ vanishing transversely along $\partial M$.
By averaging $r$, we may make it $\varphi(G)$-invariant.
Choose any collar $\partial M\times[0,\varepsilon)\hookrightarrow M$ with second coordinate $r$.
By restricting $\varphi$ to the slices $\partial M\times\{t\}$, we obtain a family of actions $\psi_t:G\acts\partial M$ for $t\in[0,\varepsilon)$.
Apply Proposition \ref{closeconjugate} to $\psi_t$ as a perturbation of $\psi_0$ to obtain diffeomorphisms $\rho_t$ (defined for $t$ sufficiently small) such that $\psi_t=\rho_t^{-1}\psi_0\rho_t$.
Now precompose the collar with $(x,t)\mapsto(\rho_t^{-1}(x),t)$.
\end{proof}

\begin{corollary}\label{restrictionfibration}
The restriction map $\Hom(G,\Diff(M))\to\Hom(G,\Diff(\partial M))$ is a fibration.
\end{corollary}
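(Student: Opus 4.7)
The plan is to verify the homotopy lifting property for $r\colon\Hom(G,\Diff(M))\to\Hom(G,\Diff(\partial M))$. Given a test space $K$, a map $\Phi_0\colon K\to\Hom(G,\Diff(M))$, and a homotopy $\Psi\colon K\times[0,1]\to\Hom(G,\Diff(\partial M))$ with $\Psi(k,0)=\Phi_0(k)|_{\partial M}$, I would construct a lift $\Phi\colon K\times[0,1]\to\Hom(G,\Diff(M))$ extending $\Phi_0$ and covering $\Psi$. The idea is to leave $\Phi_0(k)$ unchanged in the interior and splice the homotopy $\Psi(k,\cdot)$ into a $\Phi_0(k)$-equivariant collar of $\partial M$ via a smooth cutoff in the collar direction.

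The first step is to produce a continuous family of $\Phi_0(k)$-equivariant collars $c_k\colon\partial M\times[0,\varepsilon)\hookrightarrow M$. For this, I would inspect the proof of Corollary~\ref{stricthascollar}: every choice made there (the auxiliary function $r$, its $\Phi_0(k)$-averaging, the base collar, and the conjugating diffeomorphisms $\rho_t$ coming from Proposition~\ref{closeconjugate}) can be arranged to depend continuously on the input action. Locally on $K$ this yields a continuous family of collars, and globally these local families may be patched via a partition of unity on $K$, using that once the width $\varepsilon$ is fixed the space of $\Phi_0(k)$-equivariant collars of that width is convex (since the data is determined by the equivariant defining function $r$).

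The second step is the lift itself. Fix a smooth cutoff $\lambda\colon[0,\varepsilon)\to[0,1]$ with $\lambda\equiv 1$ near $0$ and $\lambda\equiv 0$ on $[\varepsilon/2,\varepsilon)$, and set
\[
\Phi(k,t)(g)(c_k(x,s))=c_k\bigl(\Psi(k,\lambda(s)t)(g)(x),\,s\bigr)
\]
for $(x,s)\in\partial M\times[0,\varepsilon)$, while declaring $\Phi(k,t)=\Phi_0(k)$ on $M\setminus c_k(\partial M\times[0,\varepsilon/2))$. The two formulas agree on the overlap $c_k(\partial M\times[\varepsilon/2,\varepsilon))$ because there $\lambda(s)=0$, $\Psi(k,0)=\Phi_0(k)|_{\partial M}$, and $c_k$ is $\Phi_0(k)$-equivariant. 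For each fixed $(k,t,s)$ the map $g\mapsto\Psi(k,\lambda(s)t)(g)$ is a homomorphism, so $\Phi(k,t)$ is a strict $G$-action on $M$; restriction to $\partial M$ yields $\Psi(k,t)$ since $\lambda(0)=1$, and at $t=0$ one recovers $\Phi_0(k)$.

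The main obstacle is really only the first step: arranging the family of $\Phi_0(k)$-equivariant collars to depend continuously on $k\in K$ with a uniform width $\varepsilon$. The splicing construction and the check that $\Phi(k,t)$ is a $G$-action, equals $\Phi_0(k)$ at $t=0$, and restricts to $\Psi(k,t)$ on $\partial M$ are essentially formal once equivariant collars are in hand.
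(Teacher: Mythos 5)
Your splicing formula has a real gap. The homotopy $\Psi\colon K\times[0,1]\to\Hom(G,\Diff(\partial M))$ is only \emph{continuous} in $t$ with respect to the $C^\infty$ topology on $\Diff(\partial M)$; there is no reason for the path $\tau\mapsto\Psi(k,\tau)(g)$ to be a smooth family of diffeomorphisms. Consequently, the map on the collar defined by $(x,s)\mapsto\bigl(\Psi(k,\lambda(s)t)(g)(x),\,s\bigr)$ need not be smooth in $s$, and hence $\Phi(k,t)(g)$ need not be an element of $\Diff(M)$ at all. For instance, take $\Psi(k,\tau)(g)$ to be translation of $S^1=\RR/\ZZ$ by $f(\tau)$ for a nowhere-differentiable continuous $f$; then $(\theta,s)\mapsto\theta+f(\lambda(s)t)$ fails to be differentiable in $s$ on the set where $\lambda$ has nonvanishing derivative, for \emph{any} nonconstant smooth $\lambda$. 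The group-law and boundary-value checks you carry out are fine, but they do not address this smoothness problem, which is exactly where the fibration property has content.

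The paper sidesteps this issue entirely by never inserting the parameter $t$ (or $s$) into the manifold coordinate. Instead it invokes Proposition~\ref{closeconjugate} (local stability of strict actions): for each fixed $\varphi$ and each small $t$ the boundary action $\psi_t$ is conjugate to $\psi_0=\varphi|_{\partial M}$ by a diffeomorphism $\rho_t$ of $\partial M$ depending continuously on $t$, and a compactness argument extends this to all $t\in[0,1]$. One then lifts $\rho_t$ to a continuous family of diffeomorphisms of $M$ using the (non-equivariant) fibration $\Diff(M)\to\Diff(\partial M)$, and sets $\varphi_t:=\rho_t^{-1}\varphi\rho_t$. Each $\varphi_t(g)$ is a composition of diffeomorphisms, so no smoothness in $t$ is needed; continuity of $t\mapsto\rho_t$ suffices. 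Your first step (continuous families of equivariant collars, via the defining-function formulation and convexity of the relevant vector fields) is in the right spirit and resembles Corollary~\ref{stricthascollar}, but the lifting step needs the conjugation mechanism rather than a cutoff splice.
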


\begin{proof}
Given a strict action $\varphi:G\acts M$ and a one-parameter family of strict actions $\psi_t:G\acts\partial M$ for $t\in[0,1]$ with $\psi_0=\varphi|_{\partial M}$, Proposition \ref{closeconjugate} guarantees that $\psi_t=\rho_t^{-1}\psi_0\rho_t$ for some one-parameter family of diffeomorphisms $\rho_t:\partial M\to\partial M$ starting at $\rho_0=\id$.
(\emph{A priori} Proposition \ref{closeconjugate} provides this only for small $t$, however a compactness argument pushes this to all $t$.)
Extending $\rho_t$ to a family of diffeomorphisms of $M$ (which we can do since $\Diff(M)\to\Diff(\partial M)$ is a fibration), we obtain a one-parameter family $\varphi_t:=\rho_t^{-1}\varphi\rho_t$.
This construction works well in families of pairs $(\varphi,\{\psi_t\}_{t\in[0,1]})$, which is enough.
\end{proof}

\subsection{Actions on circles and surfaces}

Before discussing homotopy actions on three-manifolds, we must discuss actions on circles and surfaces, where we have a good understanding, due most significantly to the solution of the Nielsen realization problem by Kerckhoff \cite{kerckhoff} and again later by Wolpert \cite{wolpert} (other than the appeal to their seminal work, the reasoning in this section is essentially elementary).

\begin{convention}\label{connecteddisconnected}
For sake of linguistic convenience, we tacitly assume all manifolds being acted on to be connected.
The results and arguments all extend trivially to the general case, which we will in fact need.
\end{convention}

In the case of the circle, we have homotopy equivalences
\begin{equation}
S^1\rtimes(\ZZ/2)=\Isom(S^1)\xrightarrow\sim\Diff(S^1)\xrightarrow\sim\Homeq(S^1).
\end{equation}
It follows that a homomorphism $G\to\pi_0\Diff(S^1)=\pi_0\Homeq(S^1)$ is simply a homomorphism $G\to\ZZ/2$ recording which elements of $G$ reverse orientation.
It also follows that the inclusion $\Hom^h(G,\Diff(S^1))\xrightarrow\sim\Hom^h(G,\Homeq(S^1))$ from the space of homotopy actions by diffeomorphisms into the space of homotopy actions by homotopy equivalences is a homotopy equivalence.
The following result compares strict actions and homotopy actions:

\begin{proposition}\label{nielsencircle}
For a finite group $G$, the inclusion of the space of strict actions $G\acts S^1$ into the space of homotopy actions $G\hacts S^1$ is a homotopy equivalence.
\end{proposition}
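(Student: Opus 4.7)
The plan is to route the comparison through the compact Lie group $O(2) = \Isom(S^1) \subseteq \Diff(S^1)$, meeting at the subspace $\Hom(G, O(2))$. This subspace is related to $\Hom(G, \Diff(S^1))$ by an averaging argument, and to $\Hom^h(G, \Diff(S^1))$ by classifying-space theory for a compact Lie group. Concretely, I would prove that the commuting square
\[\begin{array}{ccc}
\Hom(G,O(2))&\hookrightarrow&\Hom(G,\Diff(S^1))\\
\downarrow&&\downarrow\\
\Hom^h(G,O(2))&\xrightarrow{\sim}&\Hom^h(G,\Diff(S^1))
\end{array}\]
has top, left, and bottom arrows each a homotopy equivalence; the right-vertical arrow (which is the statement of the proposition) then follows by two-out-of-three. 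The bottom arrow is already a homotopy equivalence since $O(2) \hookrightarrow \Diff(S^1)$ is a homotopy equivalence of topological groups and $\Hom^h(G, \cdot)$ respects such.

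For the top arrow I would average Riemannian metrics. Set
\[Z = \{(\varphi, g) : \varphi \in \Hom(G, \Diff(S^1)),\ g \text{ a } \varphi(G)\text{-invariant length-}2\pi \text{ Riemannian metric on } S^1\}.\]
The projection $Z \to \Hom(G, \Diff(S^1))$ has fibers the spaces of $\varphi(G)$-invariant length-$2\pi$ metrics, which are nonempty (by averaging an arbitrary metric) and convex, hence contractible; it is therefore a weak equivalence. The projection $Z \to \mathcal M$ onto the convex (thus contractible) space of all length-$2\pi$ metrics on $S^1$ is trivialized by conjugation along the arc-length reparametrization isometries $\sigma_g : (S^1, g_{\mathrm{std}}) \to (S^1, g)$ (once one fixes a basepoint and orientation), yielding a canonical homeomorphism $Z \cong \mathcal M \times \Hom(G, O(2))$. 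Combining, $\Hom(G, \Diff(S^1)) \simeq \Hom(G, O(2))$, inverse to the subspace inclusion.

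For the left-vertical arrow, the section's identification of homotopy actions with bundles over $BG$ with marked fiber gives $\Hom^h(G, O(2)) \simeq \Maps_\ast(BG, BO(2))$. I would then argue that the components of this mapping space are indexed by conjugacy classes of strict homomorphisms $\rho : G \to O(2)$, and that the $\rho$-component is homotopy equivalent to the orbit $O(2)/C_{O(2)}(\rho(G))$ in $\Hom(G, O(2))$. This matches precisely the orbit-stabilizer decomposition of $\Hom(G, O(2))$, so the inclusion $\Hom(G, O(2)) \hookrightarrow \Hom^h(G, O(2))$ is a homotopy equivalence.

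The main obstacle is this last step: verifying that every marked $O(2)$-bundle over $BG$ arises from a strict representation with the correct parametrization. This amounts to the classical assertion that $\Maps_\ast(BG, BK) \simeq \Hom(G, K)$ for $G$ finite and $K$ a compact Lie group. For $K = O(2)$ one can check it directly via the fiber sequence $BSO(2) \to BO(2) \to B\ZZ/2$, using that $H^{\geq 1}(BG, \ZZ)$ is torsion to ensure the relevant obstructions in the fiberwise computation vanish; alternatively, one can cite results of Dwyer--Zabrodsky type on mapping spaces into classifying spaces of compact Lie groups. Everything else (the averaging and the fact that $O(2) \xrightarrow{\sim} \Diff(S^1)$ induces a homotopy equivalence on $\Hom^h$) is elementary or already recorded.
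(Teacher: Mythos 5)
Your proof is correct and routes through the same underlying geometry (averaging to reduce $\Diff(S^1)$ to its isometry group $O(2)$), but organizes it differently. The paper works directly with the model of a homotopy action as a circle bundle over $BG$ and shows that the space of flat connections on any such bundle is contractible: it first reduces, via a fiberwise metric, to $\Isom(S^1)$-flat connections, whose flatness equation is linear because $\isom(S^1)$ is abelian (hence the space is convex), and then shows nonemptiness by an averaging/obstruction argument using $H^2(G,\isom(S^1))=0$. You instead set up a two-out-of-three square through $\Hom(G,O(2))$. Your top arrow (averaging metrics) and bottom arrow ($O(2)\xrightarrow{\sim}\Diff(S^1)$ inducing an equivalence on $\Hom^h$, which the paper records as a general fact about topological monoids) are both fine. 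The real content lies in the left arrow, $\Hom(G,O(2))\hookrightarrow\Hom^h(G,O(2))\simeq\Maps_\ast(BG,BO(2))$: appealing to the Dwyer--Zabrodsky theorem is a heavier hammer than is warranted (its general form involves $p$-completion and is genuinely deep), and the direct verification you sketch via the fiber sequence $BSO(2)\to BO(2)\to B\ZZ/2$, once carried out carefully, would reproduce essentially the same obstruction-theoretic computation (a convex space of flat structures plus a vanishing obstruction in $H^2(G,\RR)$) that the paper already does. So the square scaffolding is a conceptually clean packaging but does not actually shorten the hard step.
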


\begin{proof}
We consider specifically the inclusion of strict actions $G\acts S^1$ into homotopy actions $G\hacts S^1$ by diffeomorphisms.
It is equivalent to show that on any given circle bundle over $BG$, the space of flat connections is contractible.

To show that this space of flat connections is contractible, we introduce geometric structures into the picture.
Given a circle bundle over $BG$, we may choose a fiberwise metric of unit length, and moreover this is a contractible choice.
Similarly, given a circle bundle over $BG$ with flat connection, we may choose a fiberwise metric of unit length which is parallel with respect to the connection; this is also a contractible choice (it is equivalent to choosing a metric of length $\left|G\right|^{-1}$ on the quotient orbifold).
Note that this works well in families of strict actions due to Proposition \ref{closeconjugate}.
Hence it suffices to show that on any given circle bundle over $BG$ with fiberwise metric of unit length, the space of flat connections preserving the metric is contractible.

Since the Lie algebra $\isom(S^1)$ of the structure group $\Isom(S^1)$ is abelian, the space of metric preserving flat connections is convex, and hence is either empty or contractible.
To show that the space of flat connections is nonempty, argue as follows.
The pullback of the bundle to $EG$ is trivial (since $EG$ is contractible) and thus has a flat connection.
Averaging this flat connection (which is possible since $\isom(S^1)$ is abelian) over the action of translation by $G$ on $EG$ produces a flat connection which descends to $BG$ as desired.

An equivalent algebraic version of this averaging/descent argument is to note that the obstruction to the existence of a flat connection lies in the group $H^2(G,\isom(S^1))$, which both is a vector space over $\RR$ (since $\isom(S^1)$ is) and is annihilated by $\left|G\right|$ (since $G$ is finite).
To see that the obstruction to the existence of a flat connection lies in $H^2(G,\isom(S^1))$, we can argue as follows.
A flat connection always exists over the $1$-skeleton of $BG$, and given a flat connection over the $1$-skeleton, the obstruction to extending it to the $2$-skeleton is a $2$-cochain valued in the universal cover of $\Isom(S^1)$, which is naturally identified with its Lie algebra $\isom(S^1)$.
Consideration of the $3$-cells of $BG$ shows that this obstruction $2$-cochain is a $2$-cocycle, and modifying the given flat connection over the $1$-skeleton allows us to change this obstruction $2$-cocycle by an arbitrary $2$-coboundary.
\end{proof}

We now turn to the case of surfaces.
For surfaces $F$ which are $K(\pi,1)$ spaces (i.e.\ anything other than $S^2$ or $P^2$), the natural map $\Diff(F)\to\Homeq(F)$ is a homotopy equivalence \cite{smale,earleeells,earleschatz,gramain}, and hence the spaces of homotopy actions on $F$ by homotopy equivalences and by diffeomorphisms are homotopy equivalent.

We begin by comparing strict actions and homotopy actions on hyperbolic surfaces (a surface will be called hyperbolic iff it has negative Euler characteristic, which implies it is a $K(\pi,1)$).

\begin{proposition}\label{nielsensurfacehyperbolic}
Let $F$ be a compact hyperbolic surface-with-boundary.
The inclusion of strict actions $G\acts F$ into homotopy actions $G\hacts F$ is a homotopy equivalence.
\end{proposition}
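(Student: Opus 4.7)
The plan is to parallel the proof of Proposition~\ref{nielsencircle}, with Teichm\"uller theory substituting for the role of the abelian Lie algebra $\isom(S^1)$, and Kerckhoff's Nielsen realization theorem \cite{kerckhoff} playing the role of the cohomological vanishing result. Passing to the bundle-theoretic picture established earlier in this section, it suffices to show that for any $F$-bundle $E\to BG$ with marked fiber, the space of flat connections on $E$ is contractible.

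First, I would enrich $E$ by a fiberwise hyperbolic structure (with totally geodesic boundary, or whichever variant of $\Teich(F)$ is appropriate to the boundary conditions being imposed). Since $\Teich(F)$ is contractible and the space of such enrichments on $E$ is the space of sections of an associated $\Teich(F)$-bundle over $BG$, this enrichment is a contractible choice. The problem is thereby reduced to showing that, for any $F$-bundle equipped with a fiberwise hyperbolic structure, the space of flat connections preserving this structure is contractible.

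A flat connection preserving the fiberwise hyperbolic structure is tantamount to the data of a $G$-invariant point $X\in\Teich(F)$ (with respect to the monodromy representation $G\to\pi_0\Diff(F)=\mathrm{MCG}(F)$) together with a lift of this monodromy to a strict isometric action $G\to\Isom(F,X)$. By Kerckhoff's theorem, the fixed-point set $\Teich(F)^G$ is non-empty; it is moreover contractible, either via identification with the Teichm\"uller space of the quotient orbifold, or by direct convexity arguments using earthquakes \cite{kerckhoff} or the Weil--Petersson metric \cite{wolpert}. Once $X\in\Teich(F)^G$ is fixed, the isometric lift $G\to\Isom(F,X)$ is uniquely determined, since for a hyperbolic surface the natural map $\Isom(F,X)\hookrightarrow\mathrm{MCG}(F)$ is injective. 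Assembling these ingredients, the space of flat isometric connections fibers over $\Teich(F)^G$ with singleton fiber, and is therefore contractible.

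The principal obstacle is the combined invocation of Kerckhoff's Nielsen realization theorem and the contractibility (as opposed to mere non-emptiness) of the fixed-point set $\Teich(F)^G$; once these classical inputs are in hand, the rest of the argument assembles along the lines sketched above. Some additional care will be needed to track boundary conditions throughout — in particular to ensure that the chosen Teichm\"uller space, the Kerckhoff realization, and the isometric lift are all compatible with how $\Diff(F)$ and $\Isom(F,X)$ are allowed to act on $\partial F$ — but these are routine bookkeeping matters rather than substantive obstacles.
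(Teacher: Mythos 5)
Your overall strategy matches the paper's exactly: pass to the bundle picture, use Teichm\"uller theory as the geometric structure, invoke Kerckhoff/Wolpert for the contractibility of $\Teich(F)^G$, and exploit uniqueness of the isometric lift. The key ingredients are all present and correctly identified.

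However, the reduction at the end of your first paragraph has the quantifiers in the wrong order, and as stated it is invalid. You claim to reduce to showing that \emph{for any} $F$-bundle with a given fiberwise hyperbolic structure, the space of flat connections preserving that structure is contractible. This is false: that space is \emph{empty} unless the chosen fiberwise metric happens to define a flat section of the associated $\Teich(F)$-bundle (i.e.\ a $G$-fixed point of Teichm\"uller space, with $G$ acting via the monodromy), which is a nontrivial constraint on the metric. Moreover, the forgetful map from (flat connection, compatible metric)-pairs down to the space of all fiberwise metrics is not a fibration (fibers jump between a point and the empty set), so even if the base is contractible this projection does not control the homotopy type of the total space. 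The correct order of operations --- which the paper uses, and which your second paragraph effectively recovers --- is to first choose a flat section of the $\Teich(F)$-bundle (this is where Kerckhoff/Wolpert enter, and this choice is contractible), then upgrade it to a fiberwise metric realizing that isotopy class (contractible since each isotopy class is contractible), then observe the flat connection preserving this metric exists and is unique, and finally forget the metric (contractible, as the space of hyperbolic metrics on the quotient orbifold is contractible). So your second paragraph, where the pairs-space fibers over $\Teich(F)^G$ with essentially singleton fiber, has the right picture; the gap is that the reduction preceding it is misstated and would not survive a careful write-up. One further point worth flagging: to make the chain of contractible choices a genuine proof that a map of spaces (rather than just sets of components) is a homotopy equivalence, one needs these constructions to behave well in families of strict actions; the paper invokes Proposition~\ref{closeconjugate} for this, and your sketch does not address it.
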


\begin{proof}
Denote by $\Teich(F)$ the space of isotopy classes of cusped hyperbolic metrics on $F^\circ$ (equivalently, this is the space of isotopy classes of punctured conformal structures on $F^\circ$).
Note that every isotopy class is contractible (as its stabilizer inside the identity component $\Diff_0(F^\circ)$ is trivial: this holds because a biholomorphism of the unit disk is determined by its action on the boundary).

Note that a homotopy action $G\hacts F$ gives rise to a strict action $G\acts\Teich(F)$ (a bundle with fiber $F$ thus gives rise to a bundle with fiber $\Teich(F)$ with flat connection).
By Kerckhoff \cite{kerckhoff} and Wolpert \cite{wolpert}, for any homotopy action $G\hacts F$ by a finite group $G$, the fixed locus $\Teich(F)^G$ is non-empty and ``convex'' in an appropriate sense.
We do not recall the precise sense of convexity (Kerckhoff and Wolpert use different notions), rather we only note that it implies contractibility (which is all we need).

We now begin the actual argument.
Starting with a homotopy action $G\hacts F$ by diffeomorphisms (equivalently, a bundle over $BG$ with fiber $F$), we choose a point in $\Teich(F)^G$ (equivalently, a flat section of the induced bundle with fiber $\Teich(F)$); by Kerckhoff and Wolpert, this is a contractible choice.
We now upgrade this to a choice of fiberwise hyperbolic metric (this is a contractible choice as noted above: every isotopy class of hyperbolic metrics is contractible).
Now there is a unique flat connection on our bundle over $BG$ with fiber $F$ preserving this fiberwise metric.
Finally, we wish to forget this metric, leaving only the flat bundle over $BG$ with fiber $F$ (i.e.\ the strict action $G\acts F$).
Choosing a hyperbolic metric on the quotient orbifold $F/G$ is a contractible choice (this can be seen in two steps: the Teichm\"uller space is contractible, and so is every isotopy class of hyperbolic metric).
Note that this works well in families due to Proposition \ref{closeconjugate}.
\end{proof}

We now extend the above result to all $K(\pi,1)$ surfaces, using the reasoning from Proposition \ref{nielsencircle}.

\begin{proposition}\label{nielsensurfacekpione}
Let $F$ be a compact surface-with-boundary which is a $K(\pi,1)$.
The inclusion of strict actions $G\acts F$ into homotopy actions $G\hacts F$ is a homotopy equivalence.
\end{proposition}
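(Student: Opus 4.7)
By Proposition \ref{nielsensurfacehyperbolic}, the claim holds when $\chi(F) < 0$. Since $F$ is $K(\pi,1)$, we exclude $S^2$ and $\RR P^2$, leaving (assuming $F$ connected by Convention \ref{connecteddisconnected}) the five flat surfaces: the disk $D^2$, annulus, Möbius band, torus $T^2$, and Klein bottle. The strategy is to adapt the proof of Proposition \ref{nielsencircle} essentially verbatim, exploiting the fact that in each of these cases the identity component $\Isom_0(F,g)$ of the isometry group of any flat metric $g$ is a compact torus, with abelian Lie algebra $\isom(F,g)$.

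Viewing a homotopy action $G \hacts F$ as an $F$-bundle over $BG$ with marked fiber $F$, I plan to first equip the bundle with a fiberwise flat metric, and then to find a flat connection preserving that metric. For the first step, the Teichmüller space $\Teich^{\text{flat}}(F)$ of isotopy classes of unit-area flat metrics on $F$ is contractible in each case (a point, a half-line, or the upper half-plane respectively), and each isotopy class is itself contractible since $\Diff_0(F)$ coincides with $\Isom_0(F,g)$ by the classical computations of Smale, Earle--Eells, and others. The induced $G$-action on $\Teich^{\text{flat}}(F)$ has a contractible fixed set via a Cartan-type fixed point argument (using a natural $G$-invariant non-positively curved metric on $\Teich^{\text{flat}}(F)$, trivial in the lower-dimensional cases). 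This yields a contractible choice of fiberwise flat metric on the bundle.

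For the second step, with the fiberwise flat metric fixed, the space of flat connections preserving this metric is convex inside the space of all metric-preserving connections (since $\isom(F,g)$ is abelian), hence either empty or contractible. Non-emptiness follows from the obstruction-theoretic argument at the end of the proof of Proposition \ref{nielsencircle}: the obstruction lives in $H^2(G,\isom(F,g))$, which vanishes because $G$ is finite and $\isom(F,g)$ is a real vector space. All constructions work well in families by Proposition \ref{closeconjugate}. The main technical point requiring care is the torus and Klein bottle cases, where $\Teich^{\text{flat}}(F)$ is positive-dimensional and one genuinely needs the fixed point argument; beyond this, no ideas are required other than those already appearing in Propositions \ref{nielsencircle} and \ref{nielsensurfacehyperbolic}.
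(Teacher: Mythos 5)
Your overall strategy matches the paper's: for the five exceptional surfaces, replace the hyperbolic metric of Proposition \ref{nielsensurfacehyperbolic} by a geometric structure whose isometry group has abelian Lie algebra, and then run the convexity/obstruction-theoretic argument from Proposition \ref{nielsencircle} to show the space of metric-preserving flat connections is nonempty and contractible. Your explicit Cartan-type fixed-point step on $\Teich(T^2)\cong\HH^2$ (and its Klein-bottle analogue) usefully fills in a detail the paper leaves implicit.

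However, there is a genuine error in your treatment of $D^2$: you group it with the ``flat surfaces'' and propose flat metrics, but by Gauss--Bonnet a compact surface-with-boundary admitting a flat metric with geodesic boundary must have Euler characteristic zero, whereas $\chi(D^2)=1$. Without the geodesic-boundary constraint, the moduli of flat metrics on $D^2$ is infinite-dimensional and far from a point, so your claim about $\Teich^{\mathrm{flat}}(D^2)$ fails and the first step of your argument breaks. The paper instead uses \emph{spherical} metrics with geodesic boundary on $D^2$: the hemisphere has isometry group $O(2)$, whose identity component still has abelian Lie algebra, so the rest of the argument goes through verbatim. Relatedly, for the annulus $S^1\times I$ and Möbius band $S^1\tildetimes I$ you need to specify flat metrics \emph{with geodesic boundary}; ``unit area'' alone does not cut down the infinite-dimensional moduli of flat metrics with arbitrary boundary shape to the contractible finite-dimensional slice your argument requires. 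With these two corrections, your proof coincides with the paper's.
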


\begin{proof}
There are five cases not covered by Proposition \ref{nielsensurfacehyperbolic}, namely $D^2$, $T^2$, $K^2$, $S^1\times I$, and $S^1\tildetimes I$.
We extend the proof to treat these cases as follows.
Instead of Teichm\"uller space, we consider the space of isotopy classes of spherical metrics (for $D^2$) and flat metrics with geodesic boundary (in the remaining cases).
These spaces are again contractible, as are the spaces of metrics in any given isotopy class.
The only difference in the proof comes when we want to find a flat connection preserving the metric.
There is now not a unique such flat connection, however as the structure groups of isometries in all cases have abelian Lie algebras, the spaces of flat connections are contractible by the argument used to prove Proposition \ref{nielsencircle}.
\end{proof}

Let us now deduce, as formal consequences, various `rel boundary' versions of the above results.

\begin{corollary}\label{surfacerelboundary}
Let $F$ be a compact surface-with-boundary which is a $K(\pi,1)$, and fix a germ of strict action of $G$ on $\Nbd\partial F$.
The inclusion of strict actions $G\acts F$ restricting to the given action on $\Nbd\partial F$ into homotopy actions $G\hacts F$ restricting to the given action on $\Nbd\partial F$ is a homotopy equivalence.
\end{corollary}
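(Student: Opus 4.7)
The plan is to realize both the ``rel boundary'' spaces of strict and homotopy actions as fibers of restriction fibrations, and then to reduce to Propositions \ref{nielsensurfacekpione} and \ref{nielsencircle}. After invoking Corollary \ref{stricthascollar} to replace the given germ by a specific strict action on a fixed collar $\partial F \times [0,\varepsilon) \hookrightarrow F$ (a contractible parameter, hence harmless), I interpret $\Nbd\partial F$ as this collar throughout.

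First I would form the commutative square of restriction maps
\begin{equation*}
\begin{array}{ccc}
\Hom(G,\Diff(F)) & \longrightarrow & \Hom(G,\Diff(\Nbd\partial F)) \\
\downarrow & & \downarrow \\
\Hom^h(G,\Diff(F)) & \longrightarrow & \Hom^h(G,\Diff(\Nbd\partial F))
\end{array}
\end{equation*}
whose vertical arrows are the inclusions \eqref{strictintohomotopy}. I would then check that both horizontal arrows are (Serre) fibrations: the top by Corollary \ref{restrictionfibration} (using that $\Diff(\Nbd\partial F) \simeq \Diff(\partial F)$ via straightening in a collar), and the bottom by appealing to the bundle picture $\Hom^h(G,\Diff(F)) = \Bun((BG,*),\Diff(F))$ set up in Section \ref{secnielsen}, under which the bottom map is the manifestly fibered operation of restricting a bundle over $BG$ with fiber $F$ to the corresponding bundle with fiber $\Nbd\partial F$.

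Next I would argue that both vertical arrows are homotopy equivalences: the left by Proposition \ref{nielsensurfacekpione}, and the right by Proposition \ref{nielsencircle} applied componentwise to $\partial F \cong \bigsqcup_i S^1$ (using Convention \ref{connecteddisconnected} to handle the disjoint union, and again that $\Diff(\Nbd\partial F)\simeq\Diff(\partial F)$). These four facts together force the induced map on the fibers over the chosen germ to be a homotopy equivalence, by the long exact sequence of a fibration and the five lemma. By construction those fibers are precisely the spaces of strict (resp.\ homotopy) actions $G \acts F$ (resp.\ $G \hacts F$) restricting to the given germ on $\Nbd\partial F$, which is the statement of the corollary.

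The principal technical point is the verification that the lower horizontal arrow is a genuine Serre fibration rather than merely a quasifibration; this is precisely why I would rely on the bundle-theoretic reformulation of $\Hom^h$, for which the fibration property is transparent, instead of manipulating the Sugawara-style cube data directly.
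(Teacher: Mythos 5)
Your commutative-square strategy reproduces the first half of the paper's argument, but there is a genuine gap in the step where you replace $\partial F$ by $\Nbd\partial F$ in the right-hand column. The restriction map $\Diff(F)\to\Diff(\Nbd\partial F)$ that you want to use is not actually defined: a diffeomorphism of $F$ need not carry a fixed collar $\partial F\times[0,\varepsilon)$ into itself, so there is no restriction homomorphism, no bundle with fiber $\Nbd\partial F$ to restrict to, and Corollary~\ref{restrictionfibration} as stated only gives the fibration $\Hom(G,\Diff(F))\to\Hom(G,\Diff(\partial F))$. Invoking Corollary~\ref{stricthascollar} once at the start to fix a preferred equivariant collar for the given germ $\varphi_0$ does not repair this, because the varying actions $\varphi$ in your fiber each come with their \emph{own} equivariant collar, and there is no reason for those to coincide with the fixed one.

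If you correct the right column to $\partial F$ (which is the only version for which the horizontal maps are fibrations and the vertical maps are covered by Propositions~\ref{nielsencircle} and~\ref{nielsensurfacekpione}), the conclusion of the five-lemma argument is only that the spaces of strict, respectively homotopy, actions on $F$ agreeing with $\varphi_0$ \emph{over $\partial F$} are homotopy equivalent --- not the spaces agreeing over $\Nbd\partial F$. For homotopy actions the two are the same up to homotopy for trivial reasons, but for strict actions the passage from ``agrees on $\partial F$'' to ``agrees on a neighborhood of $\partial F$'' is a real step. The paper closes it by a separate canonical deformation: given $\varphi$ agreeing with $\varphi_0$ on $\partial F$, it uses Corollary~\ref{stricthascollar} to produce a $\varphi(G)$-equivariant collar (a contractible choice depending on $\varphi$), deforms that collar to the fixed $\varphi_0$-equivariant one through the contractible space of collars, extends the deformation to $\Diff(F,\partial F)$, and conjugates. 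Your proposal omits this step entirely, and it is exactly where the content of the ``germ'' formulation lives.
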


\begin{proof}
Since $\Hom^h(G,\Diff(F))\to\Hom^h(G,\Diff(\partial F)))$ is a fibration (since $\Diff(F)\to\Diff(\partial F)$ is) and $\Hom(G,\Diff(\partial F))\to\Hom^h(G,\Diff(\partial F))$ is a homotopy equivalence, it follows that the inclusion of homotopy actions $G\hacts M$ which are strict on $\partial F$ into all homotopy actions is a homotopy equivalence.
Combining this with the homotopy equivalence $\Hom(G,\Diff(F))\to\Hom^h(G,\Diff(F))$ from Proposition \ref{nielsensurfacekpione}, we conclude that the inclusion of strict actions $G\acts M$ into homotopy actions $G\hacts F$ which are strict over $\partial F$ is a homotopy equivalence.
Both sides of this homotopy equivalence are fibrations over $\Hom(G,\Diff(\partial F))$ (for the domain, this is Corollary \ref{restrictionfibration}), and hence their fibers are homotopy equivalent.
In other words, given a fixed action $G\acts\partial F$, the spaces of strict/homotopy actions on $F$ whose restriction to $\partial F$ are this given action are homotopy equivalent.
This differs from the desired result only in that it concerns agreement over $\partial F$ rather than $\Nbd\partial F$.

To conclude, it thus suffices to show that the inclusion of strict (resp.\ homotopy actions) on $F$ which agree with a given action over $\Nbd\partial F$ into strict (resp.\ homotopy) actions on $F$ which agree with the given action over $\partial F$ is a homotopy equivalence.
For the case of homotopy actions, this is trivial.
For strict actions, it suffices to describe a canonical (up to contractible choice) procedure for modifying a given strict action $\varphi$ agreeing with a fixed action $\varphi_0$ over $\partial F$ to make it agree with $\varphi_0$ over $\Nbd\partial F$ (and which furthermore does nothing if $\varphi$ already agrees with $\varphi_0$ over $\Nbd\partial F$).
Here is such a procedure.
Fix a $\varphi_0(G)$-equivariant collar $i_0:\partial F\times[0,\varepsilon)\to F$, and choose a $\varphi(G)$-equivariant collar $i:\partial F\times[0,\varepsilon)\to F$ (Corollary \ref{stricthascollar} provides a well defined up to contractible choice construction of such a collar $i$, which we may further assume coincides with $i_0$ if $\varphi=\varphi_0$ over $\Nbd\partial F$).
Now the space of collars is contractible, so we may deform $i$ to $i_0$.
We may extend this deformation of collars to a family of diffeomorphisms of $F$ fixed on $\partial F$, and thus (by conjugating) to a deformation of strict actions $\varphi$.
After this deformation, the equivariant collars coincide, and hence so do the actions over $\Nbd\partial F$.
\end{proof}

\begin{corollary}\label{relsurfaceboundary}
Let $\varphi:G\hacts M$ be a homotopy action by homotopy equivalences on a three-manifold-with-boundary $M$ all of whose boundary components are closed surfaces other than $S^2$ or $P^2$.
If $\varphi$ is strict over $\Nbd\partial M$ and $\varphi$ is homotopic to a strict action, then this homotopy may be taken to be constant over $\Nbd\partial M$.
\end{corollary}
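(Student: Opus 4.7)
The plan is to adapt the proof of Corollary \ref{surfacerelboundary} one dimension higher, with Proposition \ref{nielsensurfacekpione} in place of Proposition \ref{nielsencircle}. The hypothesis that every component of $\partial M$ is a closed surface other than $S^2$ or $P^2$ makes $\partial M$ a $K(\pi,1)$, so Proposition \ref{nielsensurfacekpione} applies and $\Diff(\partial M)\to\Homeq(\partial M)$ is a homotopy equivalence.

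Let $\{\varphi_t\}_{t\in[0,1]}$ be a homotopy from $\varphi_0=\varphi$ to a strict action $\varphi_1$. First, I homotope $\{\varphi_t\}$ rel endpoints so that $\varphi_t|_{\partial M}$ is a path of strict actions. Its endpoint restrictions are already strict ($\varphi_0|_{\partial M}$ by hypothesis, $\varphi_1|_{\partial M}$ since $\varphi_1$ is strict), so Proposition \ref{nielsensurfacekpione} provides a rel-endpoint homotopy, inside $\Hom^h(G,\Homeq(\partial M))$, from $\varphi_t|_{\partial M}$ to a strict path $\psi_t\in\Hom(G,\Diff(\partial M))$. This rel-endpoint homotopy lifts back to $M$ via the fibration $\Hom^h(G,\Homeq(M))\to\Hom^h(G,\Homeq(\partial M))$ (induced by the Serre fibration $\Homeq(M)\to\Homeq(\partial M)$ onto a union of components), giving a new path still from $\varphi_0$ to $\varphi_1$ with $\varphi_t|_{\partial M}=\psi_t$ strict throughout.

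Next, I promote strictness on $\partial M$ to a neighborhood. Lift the reversed strict path $\psi_{1-s}$ starting at $\varphi_1$, via Corollary \ref{restrictionfibration}, to a path of strict actions on $M$ ending at some $\varphi_1'$ with $\varphi_1'|_{\partial M}=\varphi_0|_{\partial M}$. Then the collar-uniqueness construction from the end of the proof of Corollary \ref{surfacerelboundary} --- applied to $M$ via Corollary \ref{stricthascollar} to produce equivariant collars and interpolate between them --- further deforms $\varphi_1'$ through strict actions fixed on $\partial M$ to a strict action $\varphi_1''$ agreeing with $\varphi_0$ on $\Nbd\partial M$. Concatenating the three deformations so far yields a homotopy from $\varphi_0$ to $\varphi_1''$ whose restriction to $\partial M$ is the out-and-back loop formed by $\psi_t$ followed by $\psi_{1-t}$, constant only at the endpoints.

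The final step, which I expect to be the main obstacle, is to upgrade ``constant on $\partial M$'' to ``constant on $\Nbd\partial M$''. The out-and-back loop on $\partial M$ is canonically null-homotopic rel endpoints through strict paths. To exploit this, I would verify (a) that the germ-restriction map $\Hom^h(G,\Homeq(\Nbd\partial M))\to\Hom^h(G,\Homeq(\partial M))$ is a homotopy equivalence, using Corollary \ref{stricthascollar} and the contractibility of the space of $G$-equivariant collars to equivariantly retract $\Nbd\partial M$ onto $\partial M$, and (b) that $\Hom^h(G,\Homeq(M))\to\Hom^h(G,\Homeq(\Nbd\partial M))$ is a fibration. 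Together these allow the null-homotopy on $\partial M$ to be lifted first to $\Nbd\partial M$ and then to a rel-endpoint deformation of the concatenated path on $M$, making it constant on $\Nbd\partial M$ and thereby completing the proof.
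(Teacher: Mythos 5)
Your first two reductions are sound (and roughly track the paper's): you straighten $\varphi_t|_{\partial M}$ into a path of strict actions via Proposition~\ref{nielsensurfacekpione} and the fibration $\Homeq(M)\to\Homeq(\partial M)$, and you then adjust things so the endpoints agree over $\Nbd\partial M$ using equivariant collars (Corollary~\ref{stricthascollar}). But the route you chose for the middle step --- lifting the reversed boundary path via Corollary~\ref{restrictionfibration} rather than straightening the entire boundary path to a constant by Proposition~\ref{closeconjugate} --- saddles you with an out-and-back loop on $\partial M$, and the final step you propose to kill it does not go through. The map in your item (b), $\Homeq(M)\to\Homeq(\Nbd\partial M)$, is not even defined: a self homotopy equivalence of the pair $(M,\partial M)$ preserves $\partial M$ but has no reason to preserve any chosen collar neighborhood, so there is no ``germ restriction'' to fiber over. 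And even granting some substitute, a rel-endpoint null-homotopy of the boundary loop lifted through a fibration would only control the restriction to $\partial M$, not to $\Nbd\partial M$ --- which is precisely the gap you flag yourself as the ``main obstacle.''

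What you are missing is the paper's cylinder trick. After straightening $\varphi_t|_{\partial M}$ to a \emph{constant} path (via Proposition~\ref{closeconjugate} plus a compactness argument, not via Corollary~\ref{restrictionfibration}) and matching the equivariant collars so that $\varphi_0=\varphi_1$ over $\Nbd\partial M$, the paper replaces $M$ by $\hat M := M\cup_{\partial M}(\partial M\times[0,1])$ and extends the family by $\hat\varphi_t|_{\partial M\times[0,1]} := (\varphi_t|_{\partial M})\times\id_{[0,1]}$. Since $\varphi_t|_{\partial M}$ is constant in $t$, so is the extension over the whole cylinder, hence over $\Nbd\partial\hat M$; and $(\hat M,\hat\varphi_0)\cong(M,\varphi_0)$ identifies this with the desired homotopy on $M$. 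No null-homotopy lifting is required --- the buffer cylinder is what promotes ``constant on $\partial M$'' to ``constant on $\Nbd\partial M$,'' and without it (or an equivalent device) I don't see how your final step can be completed.
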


\begin{proof}
Let a homotopy $\varphi_t$ ($t\in[0,1]$) from $\varphi=\varphi_0$ to a strict action $\varphi_1$ be given.
By Proposition \ref{nielsensurfacekpione}, the restriction of $\varphi_t$ to $\partial M$ may be deformed relative $t=0,1$ to stay within strict actions.
Extend this deformation to $M$ (possible since $\Hom^h(G,\Homeq(M))\to\Hom^h(G,\Homeq(\partial M))$ is a fibration since $\Homeq(M)\to\Homeq(\partial M)$ is), so we may assume without loss of generality that the restriction of $\varphi_t$ to $\partial M$ is strict for all $t$.

Using the local triviality of deformations of strict actions (Proposition \ref{closeconjugate}), we see that there exists a family of diffeomorphisms $\rho_t$ of $\partial M$ starting at $\rho_0=\id$ such that the conjugated family $\rho_t\varphi_t\rho_t^{-1}$ is constant (independent of $t$).
Extending this family $\rho_t$ to diffeomorphisms of all of $M$ and replacing $\varphi_t$ with $\rho_t^{-1}\varphi_t\rho_t$, we may assume that $\varphi_t$ is itself constant over $\partial M$.

Since $\varphi_0$ and $\varphi_1$ are both strict over near $\partial M$, there are equivariant boundary collars $i_0,i_1:\partial M\times[0,\varepsilon)\hookrightarrow M$ (Corollary \ref{stricthascollar}).
By again conjugating $\varphi_t$ by an appropriate family of diffeomorphisms of $M$ acting as the identity on $\partial M$, we may assume that these boundary collars coincide, and hence that $\varphi_0=\varphi_1$ over $\Nbd\partial M$.
Finally, let $\hat M$ denote the result of using this boundary collar to attach $\partial M\times[0,1]$ to $M$, and let $\hat\varphi_t$ denote the extension of $\varphi_t$ to $\hat M$ defined by acting on $\partial M\times[0,1]$ via the restriction of $\varphi_t$ to $\partial M$ and the trivial action on $[0,1]$.
Now this is the desired homotopy (note that $(M,\varphi_0)$ and $(\hat M,\hat\varphi_0)$ are diffeomorphic).
\end{proof}

\subsection{Nielsen realization for Seifert fibered three-manifolds}

The Nielsen realization problem for Seifert fibered three-manifolds is well studied, see Heil--Tollefson \cite{heiltollefsonI}, Zimmermann \cite{zimmermannseifert}, and Meeks--Scott \cite{meeksscott}.
We give here a brief derivation of the version of these results which we will need.

Let us recall the definition of a Seifert fibration and some of its basic properties; more details may be found in Scott \cite[\S 3]{scottgeometries}.
A Seifert fibration on an orientable three-manifold $M$ is a one-dimensional foliation $\F$ such that $(M,\F)$ has an open cover by local models of the form $(D^2\times S^1,TS^1)/(\ZZ/n)$ where $1\in\ZZ/n$ acts by rotation by $2\pi/n$ on $S^1$ and by rotation by $2\pi k/n$ on $D^2$ where $k$ is relatively prime to $n$.
When $M$ may have boundary, an additional local model $(S^1\times\RR\times\RR_{\geq 0},TS^1)$ is also allowed.
(There are yet more local models relevant if $M$ is non-orientable or is an orbifold, however we will not encounter these cases in this paper.)
The leaves of $\F$ are called the fibers of the fibration.
The central fiber of the local model $(D^2\times S^1,TS^1)/(\ZZ/n)$ will be called a multiple fiber of multiplicity $n$; all other fibers are called regular fibers.

For a Seifert fibration $(M,\F)$, the holonomy groupoid of $(M,\F)$ presents a(n effective) surface orbifold $B$, and the resulting projection $M\to B$ is also referred to as a Seifert fibration (it determines $\F$ as the kernel of its derivative); we could in fact simply define a Seifert fibration as a circle bundle over a surface orbifold.
The orbifold points with $\ZZ/n$ isotropy on $B$ correspond to the multiple fibers of multiplicity $n$.
Given any orbifold covering $B'\to B$, the Seifert fibration $M\to B$ pulls back to a Seifert fibration $M'\to B'$, with $M'\to M$ being a covering space.
If $B$ is a $K(\pi,1)$ orbifold (in the sense that its orbifold universal cover $\tilde B$ is a contractible manifold), then the pullback of $M\to B$ to $\tilde B$ is a necessarily trivial circle bundle, so we see that the universal cover of $M$ is $\tilde B\times\RR$.
Thus $M$ is a $K(\pi,1)$, and there is a short exact sequence
\begin{equation}
1\to\pi_1(S^1)\to\pi_1(M)\to\pi_1(B)\to 1
\end{equation}
where $\pi_1(B)$ denotes the orbifold fundamental group (compare \cite[Lemmas 3.1 and 3.2]{scottgeometries}).
The subgroup $\ZZ=\pi_1(S^1)\subseteq\pi_1(M)$ will be called the fiber subgroup.

\begin{proposition}\label{nielsenseifert}
Let $M$ be a compact orientable three-manifold-with-boundary which admits a Seifert fibration $M\to B$ over a hyperbolic base orbifold-with-boundary $B$.
Every homotopy action by homotopy equivalences of a finite group on $M$ is homotopic to a strict action.
\end{proposition}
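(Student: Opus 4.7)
The plan is to exploit the hyperbolicity of $B$ to descend the homotopy action to the base orbifold, apply Nielsen realization there, and then lift back to $M$ by the vanishing-obstruction argument of Proposition \ref{nielsencircle}.

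First, because $B$ is hyperbolic, the Seifert fibration $M \to B$ is unique up to isotopy, and equivalently the fiber subgroup $\pi_1(S^1) \subseteq \pi_1(M)$ is characteristic (it is the unique maximal normal cyclic subgroup). Hence every element of $\Homeq(M)$ preserves the Seifert fibration up to fiberwise homotopy. Translating to the bundle language from earlier in this section, the $M$-bundle $E \to BG$ corresponding to a given homotopy action $\varphi$ carries a canonical fiberwise Seifert fibration whose quotient is a $B$-bundle $\bar E \to BG$, together with a fiberwise circle bundle $E \to \bar E$. Equivalently, $\varphi$ descends to a homotopy action $\bar\varphi : G \hacts B$ on the hyperbolic 2-orbifold $B$.

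Second, apply Nielsen realization to $\bar\varphi$: this is the orbifold analogue of Proposition \ref{nielsensurfacehyperbolic}. Teichm\"uller space $\Teich(B)$ is contractible, and Kerckhoff's theorem shows that $\Teich(B)^G$ is non-empty and contractible for any finite group action. If one prefers to avoid the orbifold extension of Kerckhoff's argument, one may instead use Selberg's lemma to find a $G$-invariant torsion-free finite-index subgroup of $\pi_1(B)$ and apply Proposition \ref{nielsensurfacehyperbolic} to the corresponding $G$-equivariant finite cover of $B$. Either way, running the argument of Proposition \ref{nielsensurfacehyperbolic} verbatim produces a flat connection on $\bar E \to BG$ canonically associated to $\bar\varphi$ up to contractible choice, i.e.\ a strict action $G \acts B$ homotopic to $\bar\varphi$.

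Third, lift this strict action from $B$ to $M$. We need a flat connection on $E \to BG$ inducing the constructed flat connection on $\bar E \to BG$; equivalently, a $G$-equivariant flat $S^1$-connection on the circle bundle $E \to \bar E$. By the argument of Proposition \ref{nielsencircle}, the obstruction lies in $H^2(G; \isom(S^1))$, which vanishes because $\isom(S^1) \cong \RR$ is a $\QQ$-vector space annihilated by $|G|$; moreover the space of such lifts is convex (since $\isom(S^1)$ is abelian), hence contractible. Every step in the construction being canonical up to contractible choice, the resulting strict action is homotopic to the original $\varphi$. The chief difficulty is the orbifold Nielsen realization step; everything else follows formally from Propositions \ref{nielsencircle} and \ref{nielsensurfacehyperbolic} together with the characteristic nature of the fiber subgroup.
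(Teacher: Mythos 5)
Your high-level strategy is the same as the paper's: descend the homotopy action to the base orbifold $B$, apply hyperbolic (orbifold) Nielsen realization there, and lift back using the abelianness of the fiber-preserving gauge group. However, there are two places where you assert a step that in fact carries the bulk of the technical weight, and as written these are genuine gaps.

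First, the claim that ``$\varphi$ descends to a homotopy action $\bar\varphi : G \hacts B$.'' You justify this by noting that the fiber subgroup is characteristic, hence every element of $\Homeq(M)$ preserves the Seifert fibration up to fiberwise homotopy. That is a $\pi_0$-level statement; what you actually need is a space-level statement, namely that the inclusion $\Homeq(M,\F)\hookrightarrow\Homeq(M)$ (self homotopy equivalences respecting the foliation) is a homotopy equivalence. Only then does a homotopy action (a map $BG\to B\Homeq(M)$) canonically factor through $B\Homeq(M,\F)$ and hence push forward to $B$. Proving this requires analyzing the multiple fibers: one must check that the inclusion $\Maps((S^1,TS^1),(M,\F))_{f(\alpha)}\hookrightarrow\Maps(S^1,M)_{f(\alpha)}$ is a homotopy equivalence both for the regular fiber class and for each multiple fiber class, and the two cases behave differently (the regular fiber case uses that $\Omega S^1\to\Omega M$ is a homotopy equivalence on the relevant components, while the multiple fiber case uses that the centralizer of a multiple fiber in $\pi_1(M)$ is exactly the infinite cyclic group it generates). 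Equivalently, in your bundle language, you are asserting without proof that the $M$-bundle $E\to BG$ carries a canonical fiberwise Seifert fibration, i.e.\ that the relevant space of Seifert fibrations on $M$ is contractible.

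Second, the lifting step is phrased as finding a ``$G$-equivariant flat $S^1$-connection on the circle bundle $E\to\bar E$'' with obstruction in $H^2(G;\isom(S^1))$. But $E\to\bar E$ is not a circle bundle: over the orbifold points of $B$ the local model is $(D^2\times S^1)/(\ZZ/n)$, and the structure group is not $\Isom(S^1)$ but rather the (still abelian) gauge group $A=\ker(\Diff(M,\F,g)\to\Diff(B))$. The obstruction therefore lives in $H^2(G;\pi_1A\otimes_\ZZ\RR)$, which still vanishes, so the idea survives, but your identification of the coefficient group is incorrect and must be repaired. Moreover, since the input is only a homotopy action rather than a strict one, the averaging that produces the strict lift must be carried out using the higher homotopy data $\varphi^1,\varphi^2,\ldots$ (as in the paper's explicit formula for $\bar\varphi(g)$ and the subsequent inductive killing of the higher components); simply quoting ``the argument of Proposition \ref{nielsencircle}'' does not address this, because there one starts with a genuine bundle over $BG$ rather than with a homotopy homomorphism whose first component need not yet be a homomorphism.
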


\begin{proof}
We begin with some preliminary observations about the fundamental group of $M$.
First, let us argue that the fiber subgroup of $\pi_1(M)$ is characterized intrinsically as those elements $x\in\pi_1(M)$ for which $gxg^{-1}\in\{x,x^{-1}\}$ for all $g\in\pi_1(M)$.
It is easy to see that elements of the fiber subgroup satisfy this property, so the point is to prove the converse.
If $x\in\pi_1(M)$ satisfies $gxg^{-1}\in\{x,x^{-1}\}$ for all $g\in\pi_1(M)$, then we conclude the same is true for the image of $x$ in $\pi_1(B)$.
On the other hand, using the dynamical classification of elements of $\Isom(\HH^2)$ and the fact that the limit set of $\pi_1(B)\subseteq\Isom(\HH^2)$ is the entire unit circle, it is easy to conjugate any nontrivial element of $\pi_1(B)$ to become distinct from itself and its inverse.
From this intrinsic characterization of the fiber subgroup, it follows immediately that any self homotopy equivalence of $M$ preserves the fiber subgroup (and hence acts on it as either plus or minus the identity).
Next, let us give an intrinsic characterization of the multiple fibers.
Obviously if $x$ is the class of a multiple fiber of multiplicity $n$, then $x^n$ is the fiber class.
Conversely, suppose that $x\in\pi_1(M)$ is such that $x^n$ is the fiber class.
The image of $x$ in $\pi_1(B)$ is thus a nontrivial torsion element.
Nontrivial torsion elements in $\pi_1(B)\subseteq\Isom(\HH^2)$ fix a unique point of $\HH^2$; hence the image of $x$ in $\pi_1(B)$ is the conjugacy class of a `loop' concentrated at a unique orbifold point of $B$.
It follows that the conjugacy class of $x$ itself is a (necessarily unique) multiple of a unique multiple fiber.

We now begin the process of deforming our given homotopy action $G\hacts M$ to make it strict.
Our first step is to deform it to make it land in the submonoid $\Homeq(M,\F)\subseteq\Homeq(M)$ consisting of those self homotopy equivalences of $M$ which send $\F$ into itself (meaning tangent vectors in $\F$ push forward to tangent vectors in $\F$).
To do this, it suffices to show that the inclusion $\Homeq(M,\F)\subseteq\Homeq(M)$ is a homotopy equivalence.
Fix a triangulation of the base orbifold $B$ whose vertices include all the orbifold points.
We may now build $\Homeq(M,\F)$ from $\Homeq(M)$ in steps, imposing the constraint (of sending $\F$ into itself) first over a neighborhood of the inverse image of the $0$-skeleton, then of the $1$-skeleton, and then everywhere.
It thus suffices to show that for any (homotopy class of) self homotopy equivalence $f$ of $M$ and any fiber or multiple fiber class $\alpha$, the inclusion
\begin{equation}\label{tangentinclusion}
\Maps((S^1,TS^1),(M,\F))_{f(\alpha)}\hookrightarrow\Maps(S^1,M)_{f(\alpha)}
\end{equation}
of maps $S^1\to M$ tangent to $\F$ into all maps $S^1\to M$ (both in the homotopy class $f(\alpha)$) is a homotopy equivalence.
Note that $f(\alpha)$ is itself either the fiber class or the class of a unique multiple fiber, by our discussion in the previous paragraph.
If $f(\alpha)$ is the fiber class, then both sides of \eqref{tangentinclusion} fiber over $M$ (by evaluation at a basepoint of $S^1$) with fibers (specific components of) $\Omega S^1$ and $\Omega M$, respectively, which are both contractible.
If $f(\alpha)$ is the class of a multiple fiber, then by our discussion in the previous paragraph, the domain of \eqref{tangentinclusion} consists solely of maps into that particular multiple fiber (hence is homotopy equivalent to $S^1$), and the target has homotopy type calculated by Lemma \ref{mapskpione}, also giving $S^1$, since the centralizer of the multiple fiber is only the infinite cyclic group it generates.

We have thus reduced ourselves to a homotopy homomorphism $G\hto\Homeq(M,\F)$.
Such a homotopy homomorphism induces a homotopy action $G\hacts B$ on the base orbifold $B$ via the forgetful map $\Homeq(M,\F)\to\Homeq(B)$.
(Concretely, a homotopy equivalence $B\to B$ is an isomorphism $\alpha:\pi_1(B)\to\pi_1(B)$ and an $\alpha$-equivariant map $\tilde B\to\tilde B$, modulo simultaneous conjugation by $\pi_1(B)$.)
Now applying the orbifold version of Proposition \ref{nielsensurfacehyperbolic}, we may deform this homotopy action to a strict action $G\acts B$.
We may now lift this deformation to a deformation of homotopy homomorphisms $G\hto\Homeq(M,\F)$ using the fact that $M\to B$ is a submersion.

We now have a homotopy homomorphism $\varphi:G\hto\Homeq(M,\F)$ inducing a strict action $\varphi_B:G\acts B$.
Our final step is now to deform $\varphi$ relative $\varphi_B$ to become strict.
To do this, we follow the argument of Proposition \ref{nielsencircle} (which we cannot simply quote directly, since our current circumstances essentially require an \emph{equivariant} version of Proposition \ref{nielsencircle}).
Begin with an arbitrary Riemannian metric on $\F$, and note that the lengths of fibers (multiple fibers counted with multiplicity) define a smooth function on $M$, so dividing by its square gives a metric $g$ on $\F$ for which all fibers have unit length.
Now $\Homeq(M,\F)$ deformation retracts (relative the forgetful map to $\Homeq(B)$) onto the subspace $\Homeq(M,\F,g)$ of maps which preserve the metric on $\F$; hence we may deform $\varphi$ relative $\varphi_B$ to land in $\Homeq(M,\F,g)$.
We thus have a homotopy homomorphism $\varphi:G\hto\Diff(M,\F,g)$ lifting our fixed strict action $\varphi_B:G\acts B$.
We define a (strict) homomorphism $\bar\varphi:G\to\Diff(M,\F,g)$ by the formula
\begin{equation}\label{avgdefn}
\bar\varphi(g):=\frac 1{|G|}\sum_{a\in G}\varphi(ga)\varphi(a)^{-1}
\end{equation}
which we now explain (we use the shorthand $\varphi=\varphi^0$).
Note that the kernel $A:=\ker(\Diff(M,\F,g)\to\Diff(B))$ is abelian.
Each term $\varphi(ga)\varphi(a)^{-1}\in\Diff(M,\F,g)$ lies in the fiber of $\Diff(M,\F,g)\to\Diff(B)$ over $\varphi_B(g)$, which is an $A$-torsor (principal homogeneous space for $A$).
Now $\varphi^1(g,a)\varphi(a)^{-1}$ provides a path within this $A$-torsor between $\varphi(ga)\varphi(a)^{-1}$ and $\varphi(g)$.
The data of these paths is sufficient to define the average in this $A$-torsor of $\varphi(ga)\varphi(a)^{-1}$ over $a\in G$, and this is the meaning of the right side of \eqref{avgdefn}.
Note thus that \eqref{avgdefn} combines additive and multiplicative notation for the same group operation.
To check that $\bar\varphi(g)$ is a group homomorphism, we calculate
\begin{equation}
\bar\varphi(g)\bar\varphi(h):=\frac 1{|G|}\sum_{b\in G}\varphi(ghb)\varphi(hb)^{-1}\varphi(hb)\varphi(b)^{-1}
\end{equation}
where the average now takes place in the $A$-torsor over $\varphi_B(g)\varphi_B(h)=\varphi_B(gh)$ using the paths from each term $\varphi(ghb)\varphi(b)^{-1}$ to $\varphi(g)\varphi(h)$ given by concatenation of $\varphi^1(g,hb)\varphi(b)^{-1}$ and $\varphi(g)\varphi^1(h,b)\varphi(b)^{-1}$.
Now $\varphi^2(g,h,b)$ provides a homotopy between these paths and the paths $\varphi^1(gh,b)\varphi(b)^{-1}$ to $\varphi(gh)$ and then $\varphi^1(g,h)$ to $\varphi(g)\varphi(h)$.
We may dispense with the second path since it is independent of $b$, so we see that the right side above coincides with $\bar\varphi(gh)$.
We now deform our homotopy action $\varphi:G\hto\Homeq(M,\F)$ relative $\varphi_B$ by homotoping $\varphi^0$ to $\bar\varphi$.

At this point, we have a homotopy homomorphism $\varphi:G\hto\Homeq(M,\F)$ which lifts a strict action $\varphi_B:G\acts B$ and whose first component $\varphi^0$ is a group homomorphism (note that this is weaker than $\varphi$ being strict, which also entails all higher $\varphi^k$ being constant).
Now $\varphi^1(x,y)$ is a path from $\varphi^0(x)\varphi^0(y)$ to $\varphi^0(xy)$; since these are equal, $\varphi^1(x,y)$ determines an element of $\pi_1A$ where $A=\ker(\Diff(M,\F,g)\to\Diff(B))$ is as before.
The existence of $\varphi^2$ means that this function $G\times G\to\pi_1A$ is a $2$-cocycle for the action of $G$ on $\pi_1A$ via $\varphi^0$.
Now $H^2(G,\pi_1A\otimes_\ZZ\RR)=0$ since $G$ is finite, so this $2$-cocycle is the coboundary of a $1$-cochain valued in $\pi_1A\otimes_\ZZ\RR$.
Compose this $1$-cochain with the exponential map $\pi_1A\otimes_\ZZ\RR\to A$, and deform $\varphi$ by multiplying $\varphi^0$ by this $1$-cochain; note that $\varphi^0$ remains a group homomorphism since the coboundary of this $1$-cochain lies in $\pi_1A$ which is annihilated by the exponential map.
After this deformation, we may now null homotope $\varphi^1(x,y)$ rel its endpoints and rel $\varphi_B$.
Finally, we may null homotope $\varphi^2$ rel boundary and rel $\varphi_B$ since $\pi_2A=0$, and similarly for all higher components of $\varphi$ inductively.
\end{proof}

\begin{lemma}\label{nielsenttwoi}
Every homotopy action by homotopy equivalences of a finite group on $T^2\times I$ is homotopic to a strict action.
\end{lemma}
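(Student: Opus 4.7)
The plan is to equip $T^2\times I$ with a flat Riemannian metric and follow the argument at the end of the proof of Proposition \ref{nielsenseifert}, with the base orbifold being trivial rather than hyperbolic. Note that Proposition \ref{nielsenseifert} does not apply directly, since any Seifert fibration of $T^2\times I$ has a flat (not hyperbolic) base orbifold. The key point enabling the adaptation is that all the relevant isometry groups have abelian identity component $T^2$.

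A given homotopy action $\varphi:G\hto\Homeq(T^2\times I)$ induces a strict homomorphism $\rho:G\to\pi_0\Homeq(T^2\times I)=\GL(2,\ZZ)\times\ZZ/2$, whose first factor records the induced automorphism of $\pi_1(T^2\times I)=\ZZ^2$ and whose second factor records whether the two boundary components are swapped. By Lemma \ref{mapskpione} together with the pair-map constraint, each component of $\Homeq(T^2\times I)$ has homotopy type $T^2$. The image of $\rho$ in $\GL(2,\ZZ)$ is finite and acts on the $\mathrm{CAT}(0)$ space $\Teich(T^2)=\HH^2$, so it has a fixed point, giving a flat metric $g_{T^2}$ on $T^2$ preserved by $\rho(G)|_{\GL(2,\ZZ)}$. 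With the corresponding product flat metric $g=g_{T^2}+dt^2$ on $T^2\times I$, the isometry group $\Isom(T^2\times I,g)=(T^2\rtimes F)\times\ZZ/2$ contains $\rho(G)$ in its component group $F\times\ZZ/2$, and its identity component $T^2$ (of translations) is abelian.

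I then mimic the final steps of the proof of Proposition \ref{nielsenseifert}. Since the inclusion $\Isom(T^2\times I,g)\hookrightarrow\Homeq(T^2\times I)$ is a homotopy equivalence onto the components indexed by $F\times\ZZ/2$ (both sides have component homotopy type $T^2$), I deform $\varphi$ to land in $\Isom(T^2\times I,g)$. Formula \eqref{avgdefn} then averages $\varphi^0$ to a strict homomorphism $\bar\varphi:G\to\Isom(T^2\times I,g)$, which is well-defined because the identity component $T^2$ is abelian. The higher $\varphi^k$ are successively null-homotoped by the obstruction argument at the end of Proposition \ref{nielsenseifert}: the primary obstruction lies in $H^2(G,\pi_1 T^2\otimes_\ZZ\RR)=0$ since $G$ is finite, and subsequent obstructions vanish because $\pi_j T^2=0$ for $j\geq 2$. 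The main obstacle I anticipate is verifying the component-wise homotopy equivalence $\Isom(T^2\times I,g)\hookrightarrow\Homeq(T^2\times I)$ on the relevant components, which underlies the reduction to isometries before averaging.
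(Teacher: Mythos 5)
Your proof is correct, but it takes a genuinely different route from the paper's. The paper's proof of this lemma is a one-liner: it observes that every homotopy action on $T^2\times I$ is homotopic to one acting as a product on the $T^2$ and $I$ factors, and then invokes the surface Nielsen realization result Proposition~\ref{nielsensurfacekpione} for the $T^2$ factor (the $I$ factor being trivial to handle, since $\Homeq(I)\simeq\ZZ/2$). You instead work directly in three dimensions, carrying out from scratch the ``fix an invariant flat metric, reduce to isometries, average over the abelian identity component, kill higher obstructions'' argument that the paper deploys for circles in Proposition~\ref{nielsencircle}, for the flat surfaces inside the proof of Proposition~\ref{nielsensurfacekpione}, and --- in its Seifert-fibered incarnation --- in Proposition~\ref{nielsenseifert}. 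Both routes are sound; the paper's is shorter because it piggybacks on the two-dimensional result, while yours is more self-contained and makes explicit that the flat averaging argument applies directly to $T^2\times I$.

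As for the obstacle you flag --- that the inclusion $\Isom(T^2\times I,g)\hookrightarrow\Homeq(T^2\times I)$ is a homotopy equivalence onto the components it hits --- this is not a genuine difficulty. On identity components the map is $T^2\hookrightarrow\Homeq(T^2\times I)_{\id}$, where the source is the torus of translations. Evaluation at a basepoint $p\in T^2\times I$ identifies $\pi_1$ of either side with the subgroup $\ZZ^2=\pi_1(T^2)\hookrightarrow\pi_1(T^2\times I)$ (on the $\Homeq$ side this follows from Lemma~\ref{mapskpione} together with the pair constraint), and under these identifications the inclusion is the identity on $\ZZ^2$; since both sides are $K(\ZZ^2,1)$'s this suffices. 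The other components are translates of the identity component, so the verification is complete. Everything else in your argument --- the fixed point in $\Teich(T^2)\cong\HH^2$, the structure $\Isom(T^2\times I,g)=(T^2\rtimes F)\times\ZZ/2$ with normal abelian identity component $T^2$, the averaging formula~\eqref{avgdefn}, and the obstruction theory using $H^2(G,\pi_1T^2\otimes_{\ZZ}\RR)=0$ and $\pi_{\geq 2}T^2=0$ --- checks out.
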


\begin{proof}
The map $\Homeq(T^2)\to\Homeq(T^2\times I)$ is a homotopy equivalence, hence every homotopy action on $T^2\times I$ is homotopic to one acting only on the $T^2$ coordinate.
Now apply Proposition \ref{nielsensurfacekpione}.
\end{proof}

\begin{lemma}\label{nielsendtwosone}
Every homotopy action by homotopy equivalences of a finite group on $D^2\times S^1$ is homotopic to a strict action.
\end{lemma}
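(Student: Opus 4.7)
The plan is to adapt the proof of Proposition \ref{nielsenseifert} to the Seifert fibration $\pi : D^2 \times S^1 \to D^2$ with fibers $\{x\} \times S^1$ and no multiple fibers. Two modifications are required relative to that proof: the base $B = D^2$ is not hyperbolic, so Proposition \ref{nielsensurfacehyperbolic} is replaced by Proposition \ref{nielsensurfacekpione} (which explicitly covers $D^2$ via spherical metrics); and the fiber subgroup is the entire $\pi_1(D^2 \times S^1) = \ZZ$, so the intrinsic characterization of the fiber subgroup used in Proposition \ref{nielsenseifert} becomes vacuous.

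The first step is to deform the given homotopy action $\varphi : G \hto \Homeq(D^2 \times S^1)$ to land in the submonoid $\Homeq(D^2 \times S^1, \F)$ of self homotopy equivalences preserving the foliation $\F$. Following the argument in Proposition \ref{nielsenseifert}, this reduces via a triangulation of the base to showing that for any self homotopy equivalence $f$ and the fiber class $\alpha$ (there being no multiple fibers to consider), the inclusion \eqref{tangentinclusion} is a homotopy equivalence. Any such $f$ acts on $\pi_1 = \ZZ$ by $\pm 1$, so $f(\alpha) = \pm \alpha$ is again the fiber class; both sides of \eqref{tangentinclusion} then fiber over $D^2 \times S^1$ by evaluation at a basepoint, with fibers being components of $\Omega S^1$ and $\Omega(D^2 \times S^1)$ respectively, and the inclusion between these induced by the core circle is a homotopy equivalence.

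The second step is to deform the induced homotopy action on the base $D^2$ to be strict, which is possible by Proposition \ref{nielsensurfacekpione}; the deformation lifts to the total space because the forgetful map $\Homeq(D^2 \times S^1, \F) \to \Homeq(D^2)$ is a fibration (since $\pi$ is a submersion). The third and final step is to run the averaging argument from Proposition \ref{nielsenseifert} essentially verbatim: choose a $G$-invariant fiberwise metric of unit length on $\F$, deform $\varphi$ to land in $\Homeq(D^2 \times S^1, \F, g)$, and replace $\varphi^0$ with the averaged strict homomorphism $\bar\varphi$ defined by formula \eqref{avgdefn} (using that the kernel $A$ of $\Diff(D^2 \times S^1, \F, g) \to \Diff(D^2)$ is abelian). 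The higher components $\varphi^k$ are killed inductively using that $H^2(G, \pi_1 A \otimes_\ZZ \RR) = 0$ for $G$ finite and $\pi_i A = 0$ for $i \geq 2$.

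I expect the main obstacle to be the careful bookkeeping of mapping spaces in the first step, where the foliation-preserving condition must be imposed skeleton-by-skeleton along a triangulation of $D^2$; the remaining steps translate nearly word-for-word from the proof of Proposition \ref{nielsenseifert}, with the analogous cohomological vanishing and abelianness of the structure group carrying over unchanged.
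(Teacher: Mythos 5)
Your first step has a genuine gap: the inclusion $\Homeq(D^2\times S^1,\F)\hookrightarrow\Homeq(D^2\times S^1)$ is \emph{not} a homotopy equivalence (it is not even surjective on $\pi_0$), so you cannot deform a general homotopy action to land in the foliation-preserving submonoid. Recall that $\Homeq$ here denotes self homotopy equivalences of the \emph{pair} $(D^2\times S^1,T^2)$, and such a map is constrained on $\pi_1(T^2)=\ZZ\langle m\rangle\oplus\ZZ\langle\ell\rangle$ only to preserve the kernel $\langle m\rangle$ of $\pi_1(T^2)\to\pi_1(D^2\times S^1)$, hence to act by a matrix in $\left(\begin{smallmatrix}\pm1&*\\0&\pm1\end{smallmatrix}\right)$. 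The Seifert fibers of $\F$ restricted to $T^2$ lie in the class $\ell$, which need not be preserved up to sign: for instance $\left(\begin{smallmatrix}1&1\\0&-1\end{smallmatrix}\right)$ has order two, is realized by an affine involution of $T^2$ extending to the solid torus, and sends $\ell$ to $m-\ell\neq\pm\ell$. Your verification of \eqref{tangentinclusion} only checks maps $S^1\to M$, but for a boundary vertex of the triangulation of $D^2$ the relevant mapping space is into $T^2=\partial M$, where the class $f(\alpha)\in\pi_1(T^2)$ may be $\ell+km$ with $k\neq 0$, making the domain of the analogous inclusion empty while the codomain is not. The reason this does not afflict Proposition \ref{nielsenseifert} is precisely hyperbolicity of the base: there $\pi_1(T)\to\pi_1(M)$ is injective, so the intrinsic characterization of the fiber subgroup in $\pi_1(M)$ pins down $\ell\in\pi_1(T)$ up to sign. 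You flagged that this characterization ``becomes vacuous'' for $D^2\times S^1$ but drew the wrong conclusion from it: the vacuity is exactly what breaks the argument.

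The paper's proof sidesteps this by working with the \emph{meridian} foliation instead of the Seifert fibration. It shows $\Homeq(D^2\times S^1)\to\Homeq(T^2)$ is a homotopy equivalence onto the components preserving $[m]$, deforms the resulting homotopy action on $T^2$ to a strict affine action using Proposition \ref{nielsensurfacekpione}, and then observes that preservation of $[m]$ forces this affine action into the upper-triangular subgroup, which visibly extends over $D^2\times S^1$. The meridian class, unlike the longitude, \emph{is} intrinsically characterized (as the kernel of $\pi_1(T^2)\to\pi_1(M)$), which is what makes this approach go through where yours does not.
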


\begin{proof}
We first claim that the map $\Homeq(D^2\times S^1)\to\Homeq(\partial D^2\times S^1)$ is a homotopy equivalence onto the components in its image (which are precisely those mapping classes of $\partial D^2\times S^1$ which preserve the free homotopy class of loops $\partial D^2\times\{*\}$).
Indeed, given a self homotopy equivalence of $\partial D^2\times S^1$, the choice of an extension to $D^2\times\{*\}$ (mapping to $D^2\times S^1$) is contractible (since the second based loop space $\Omega^2(D^2\times S^1)$ is contractible), as is the subsequent choice of an extension to the remaining $3$-cell (since $\Omega^3(D^2\times S^1)$ is contractible).

It follows that a homotopy action $G\hacts D^2\times S^1$ is the same (up to homotopy equivalence) as a homotopy action $G\hacts\partial D^2\times S^1$ which preserves the free homotopy class of loops $\partial D^2\times\{*\}$.
The desired result may thus be stated alternatively as: every homotopy action by homotopy equivalences of a finite group on $\partial D^2\times S^1$ preserving the free homotopy class of loops $\partial D^2\times\{*\}$ is homotopic to the restriction of a strict action on $D^2\times S^1$.

This equivalent statement now follows easily from Proposition \ref{nielsensurfacekpione}.
Indeed, Proposition \ref{nielsensurfacekpione} implies that any given homotopy action $G\hacts\partial D^2\times S^1$ may be deformed to a strict action by Proposition \ref{nielsensurfacekpione}.
Such a strict action $G\acts\partial D^2\times S^1$, either by its construction from the proof of Proposition \ref{nielsensurfacekpione} or by \cite[Theorem 2.4]{scottgeometries}, preserves some flat metric, hence in particular preserves the affine structure on $\partial D^2\times S^1$, namely it lands inside $(\RR^2/\ZZ^2)\rtimes\GL_2(\ZZ)$.
Since it preserves the free homotopy class of loops $\partial D^2\times\{*\}$, it in fact lands inside the subgroup $(\RR^2/\ZZ^2)\rtimes\left(\begin{smallmatrix}*&*\\0&*\end{smallmatrix}\right)\leq(\RR^2/\ZZ^2)\rtimes\GL_2(\ZZ)$, whose action on $\partial D^2\times S^1$ naturally extends to $D^2\times S^1$.
\end{proof}

\begin{lemma}\label{seifertinthreespace}
Let $M\to B$ be a Seifert fibration of a compact three-manifold-with-boundary.
If $M$ admits an embedding into $\RR^3$, then either $B$ is hyperbolic or $M=D^2\times S^1$ or $T^2\times I$.
\end{lemma}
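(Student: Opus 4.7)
My plan is to enumerate the non-hyperbolic compact $2$-orbifolds $B$ with nonempty boundary and analyze the resulting Seifert fibered space $M$ in each case. First, I collect the constraints imposed by the embedding $M \hookrightarrow \RR^3$: the manifold $M$ is orientable, has nonempty boundary (no nonempty closed three-manifold embeds in $\RR^3$, by invariance of domain and compactness), and has $H_1(M;\ZZ)$ torsion-free by Alexander duality applied in $S^3$. In particular $\partial B \neq \varnothing$, since $\partial M$ projects onto $\partial B$.

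Next, I enumerate the compact $2$-orbifolds with nonempty boundary satisfying $\chi^{\mathrm{orb}}(B) \geq 0$: the disk $D^2$ (possibly with a single cone point $D^2(n)$), the disk with two order-$2$ cone points $D^2(2,2)$, the annulus $S^1 \times I$, and the Möbius band $S^1 \tildetimes I$. In the disk cases, the local Seifert model $(D^2 \times S^1)/(\ZZ/n)$ (acting diagonally by rotation) is itself globally a solid torus, so $M = D^2 \times S^1$. In the annulus case, $H^2(S^1 \times I;\ZZ) = 0$ forces the circle bundle to be trivial, giving $M = T^2 \times I$.

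For the two remaining cases $B = D^2(2,2)$ and $B = S^1 \tildetimes I$, both orbifolds are doubly covered by the annulus (the former by unfolding the two cone points, the latter via the orientation double cover). Pulling back the Seifert fibration, I find that $M$ is doubly covered by a Seifert fibered space over the annulus, which is $T^2 \times I$ by the previous paragraph; the deck involution is free and orientation-preserving. Since the involution swaps the two boundary tori, its restriction to the preserved middle slice $T^2 \times \{1/2\}$ is a free orientation-reversing involution of $T^2$, whose quotient is the Klein bottle $K$. Thus $M$ is the (orientable, hence twisted) $I$-bundle over $K$, and deformation retracts onto $K$. Consequently $H_1(M;\ZZ) = H_1(K;\ZZ) = \ZZ \oplus \ZZ/2$, which contradicts Alexander duality. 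These two cases are therefore impossible, completing the proof.

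The main obstacle I expect is the last paragraph: one must check carefully that every free orientation-preserving deck involution on $T^2 \times I$ arising this way really does force the quotient to be the twisted $I$-bundle over $K$, with no other orientable possibility. Once that identification is in hand, the $2$-torsion in $H_1$ delivers the contradiction.
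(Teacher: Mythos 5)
Your enumeration of non-hyperbolic base orbifolds with boundary ($D^2$, $D^2(n)$, $D^2(2,2)$, $S^1\times I$, $S^1\tildetimes I$) matches the paper's, and your handling of the first three-and-a-half cases is fine (modulo a minor gap in the annulus case: $H^2=0$ only rules out nontrivial \emph{oriented} circle bundles, so you also need to observe that the nonorientable circle bundle over the annulus has total space $K\times I$, which is excluded by orientability of $M$). Where you diverge from the paper is in the cases $B=D^2(2,2)$ and $B=S^1\tildetimes I$: the paper simply observes that $M$ contains an embedded Klein bottle --- in the first case the preimage of an arc in $D^2$ joining the two cone points is the union of two M\"obius bands glued along their common boundary circle, and in the second case the preimage of the core circle of the M\"obius band is a Klein bottle because $M$ is orientable while the core has orientation-reversing holonomy --- and a Klein bottle cannot embed in $\RR^3$. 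Your route via orbifold double covers, identification of the quotient of $T^2\times I$, and Alexander duality (torsion-freeness of $H_1$) reaches the same conclusion, but at considerably greater cost.

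The step you flag yourself --- identifying the quotient of $T^2\times I$ by a free orientation-preserving involution exchanging the two boundary tori as the twisted $I$-bundle over $K$ --- is indeed a real gap as written: nothing guarantees a priori that a $\tau$-invariant middle slice $T^2\times\{1/2\}$ exists, so you cannot simply restrict $\tau$ there. The conclusion is nonetheless correct, and can be repaired either by an equivariant collar/averaging argument producing an invariant slice, or purely algebraically by analyzing the extension $1\to\ZZ^2\to\pi_1(M)\to\ZZ/2\to 1$: torsion-freeness forces the extension to be nonsplit, the orientation count forces the $\ZZ/2$-action on $\ZZ^2$ to have determinant $-1$, and one checks that the only such nonsplit torsion-free extension is $\pi_1(K)$, whose abelianization $\ZZ\oplus\ZZ/2$ has torsion. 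Either repair works, but note that the paper's direct exhibition of an embedded Klein bottle avoids the issue entirely and is the cleaner argument; you might also observe that once you know $M$ is the twisted $I$-bundle over $K$, it contains $K$ embedded, so you could finish by the same ``no Klein bottle in $\RR^3$'' observation instead of Alexander duality.
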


(See also the classification in Budney \cite[Proposition 4]{budneyjsj}.)

\begin{proof}
Clearly $\partial M\ne\varnothing$, so the base orbifold $B$ must have nonempty boundary.
There is thus only a small list of non-hyperbolic base orbifolds for us to consider.
If the base is $D^2$ with $\leq 1$ orbifold points, then the total space is $S^1\times D^2$.
If the base is $D^2$ with two orbifold points with $\ZZ/2$ isotropy, then the total space has an embedded Klein bottle (the inverse image of an arc between the two orbifold points) and thus cannot embed into $\RR^3$.
If the base is an annulus $S^1\times I$, then the total space is either $T^2\times I$ or non-orientable and thus cannot embed into $\RR^3$.
If the base is a M\"obius strip $S^1\tildetimes I$, then the total space is either non-orientable or contains an embedded Klein bottle and thus cannot embed into $\RR^3$.
\end{proof}

\subsection{Nielsen realization for hyperbolic three-manifolds}

A Nielsen realization type result for hyperbolic three-manifolds follows from the deep results of Ahlfors, Bers, Kra, Marden, Maskit, and Mostow, as we now recall (for detailed discussion, see also \cite{matsuzakitaniguchi,kapovich,mardenbook}).

Given a group $\Gamma$, denote by $X(\Gamma)$ the set of representations $\rho:\Gamma\to\PGL_2\CC=\Isom^+(\HH^3)$ up to conjugation.
We can regard $X(\Gamma)$ as a groupoid, in which an object is a representation $\rho:\Gamma\to\PGL_2\CC$ and an isomorphism $\rho\to\rho'$ is an element $\gamma\in\PGL_2\CC$ satisfying $\gamma\rho\gamma^{-1}=\rho'$.
This latter perspective leads naturally to the observation that $X(\Gamma)$ makes sense more generally for any groupoid $\Gamma$, namely it is the groupoid of functors from $\Gamma$ to the groupoid $B\PGL_2\CC$ with a single object whose automorphism group is $\PGL_2\CC$.
Later, we will be interested specifically in the case $\Gamma=\pi_1(M)$ is the fundamental groupoid of a manifold $M$.
By speaking of groupoids instead of groups, we can avoid choosing a basepoint on $M$ or assuming that $M$ is connected.
Even though our `official' perspective is to work with groupoids, we will sometimes slip into the more familiar language of groups in the discussion which follows.

Embedding $X(\Gamma)$ into $(\PGL_2\CC)^r/\PGL_2\CC$ via $\rho\mapsto(\rho(\gamma_1),\ldots,\rho(\gamma_r))$ for generators $\gamma_1,\ldots,\gamma_r\in\Gamma$ gives $X(\Gamma)$ the structure of a (possibly singular and possibly non-separated) complex analytic stack.

Given $\rho\in X(\Gamma)$, we can form $M_\rho:=\colim_\Gamma\rho^\ast\HH^3$.
If $\Gamma$ is a group, then $M_\rho=\HH^3/\Gamma_\rho$ is simply the quotient of $\HH^3$ by $\Gamma$ acting via $\rho:\Gamma\to\PGL_2\CC\acts\HH^3$.
Given $\rho_1\in X(\Gamma_1)$, $\rho_2\in X(\Gamma_2)$, a homomorphism $\alpha:\Gamma_1\to\Gamma_2$, and an isomorphism $\rho_1\xrightarrow\sim\rho_2\circ\alpha$, we obtain a map $M_{\rho_1}\to M_{\rho_2}$ which is a local isometry.

The quotient $M_\rho$ is separated iff the action of $\Gamma$ on $\HH^3$ via $\rho$ is proper, meaning $\Gamma\times\HH^3\to\HH^3\times\HH^3$ is proper.
Since the action $\PGL_2\CC\acts\HH^3$ is proper, this is equivalent to $\rho:\Gamma\to\PGL_2\CC$ being proper (i.e.\ finite kernel and discrete image).
A representation $\rho:\Gamma\to\PGL_2\CC$ which is proper is called a \emph{Kleinian group}, and the set of such $\rho\in X(\Gamma)$ is denoted $H(\Gamma)\subseteq X(\Gamma)$.
Thus for $\rho\in H(\Gamma)$, we have an orbifold $M_\rho$ which comes with a canonical equivalence of groupoids $\pi_1(M_\rho)=\Gamma$.

The action of $\PGL_2\CC$ on $\HH^3$ extends to an action on the ideal boundary $\partial\HH^3=S^2_\infty$ the Riemann sphere by biholomorphisms, and in fact $\PGL_2\CC=\Con^+(S^2_\infty)$ is precisely the set of all orientation preserving conformal automorphisms of $S^2_\infty$.
The action of $\Gamma$ on $S^2_\infty$ induces a decomposition $S^2_\infty=\Omega_\rho\cup\Lambda_\rho$ into the open \emph{set of discontinuity} $\Omega_\rho$ and its complement the closed \emph{limit set} $\Lambda_\rho$.
For $\rho\in H(\Gamma)$, the orbifold $M_\rho$ admits a natural partial compactification $\overline M_\rho$ defined as the quotient of $\HH^3\cup\Omega_\rho$ by $\Gamma$.

A Kleinian group $\rho\in H(\Gamma)$ is called \emph{geometrically finite} iff the associated action on $\HH^3$ has a finite sided polytope as fundamental domain.
When $\left|\Lambda_\rho\right|>1$, this condition is equivalent to the $\varepsilon$-neighborhood of the convex core of $M_\rho$ having finite volume for some (equivalently every) $\varepsilon>0$.
Denote by $GF(\Gamma)\subseteq H(\Gamma)$ the collection of $\rho\in H(\Gamma)$ which are geometrically finite.

For $\rho\in GF(\Gamma)$, the manifold $\overline M_\rho$ has a natural compactification $\overline{\overline M}_\rho$ which is a compact three-manifold-with-boundary.
We have $\overline M_\rho=\overline{\overline M}_\rho\setminus P$ where $P\subseteq\partial\overline{\overline M}_\rho$ is a codimension zero submanifold-with-boundary called a \emph{pared structure} (consisting of tori and annuli satisfying some axioms, see Morgan \cite[Definition 4.8]{morgan} or Canary--McCullough \cite[\S 5]{canarymccullough} or Kapovich \cite[\S 1.5]{kapovich}), marking the elements of $\Gamma$ which $\rho$ sends to parabolics.

\begin{theorem}\label{nielsenhyperbolic}
Let $M\ne B^3$ be a compact three-manifold-with-boundary whose interior admits a geometrically finite hyperbolic metric with pared structure $P=(\partial M)_{\chi=0}$.
Every homotopy action by homotopy equivalences of a finite group on $M$ lifts to a strict action.
\end{theorem}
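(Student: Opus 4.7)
The plan is to combine the Ahlfors--Bers parametrization of geometrically finite hyperbolic structures, Kerckhoff's Nielsen realization for surfaces (Proposition~\ref{nielsensurfacekpione}), and Mostow--Prasad--Marden rigidity. Since $M$ is aspherical, a homotopy action $\varphi:G\hacts M$ by homotopy equivalences induces a strict homomorphism $G\to\Out(\pi_1(M))$ and hence a strict $G$-action on the character stack $X(\pi_1(M))$ preserving $GF(\pi_1(M))$. Let $\rho_0\in GF(\pi_1(M))$ denote the given geometrically finite representation; its connected component $\mathcal D\subseteq GF(\pi_1(M))$ is $G$-invariant, since the homeomorphism type of $\overline{\overline M}_\rho$ is locally constant on $\mathcal D$ and equals $M$.

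By the Ahlfors--Bers--Kra--Maskit parametrization, $\mathcal D$ is canonically homeomorphic to the Teichm\"uller space $\Teich(\partial M\setminus P)$ of conformal structures on the non-parabolic boundary, with the $G$-action on $\mathcal D$ corresponding to the natural action on $\Teich(\partial M\setminus P)$ induced by the action on $\partial M\setminus P$ (where $G$ may permute components). Applying Proposition~\ref{nielsensurfacekpione} in the disconnected version allowed by Convention~\ref{connecteddisconnected}, we find a $G$-fixed point in this Teichm\"uller space, and hence a $G$-invariant $\rho\in\mathcal D$.

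The $G$-invariance of $[\rho]\in X(\pi_1(M))$ means $G$ factors through $\Stab_{\Out(\pi_1(M))}([\rho])$. Since $M\ne B^3$, the Kleinian group $\rho(\pi_1(M))$ is nonelementary, so Mostow--Prasad--Marden rigidity applies: any outer automorphism of $\pi_1(M)$ stabilizing $[\rho]$ is realized by a uniquely determined isometry of $M_\rho$, giving a strict group homomorphism $\Stab_{\Out(\pi_1(M))}([\rho])\to\Isom(M_\rho)$ (the ambiguity in lifting $\Out\to\Aut$ by an inner automorphism is precisely cancelled by the ambiguity of the conjugating element in $\PGL_2\CC$ modulo $\rho(\pi_1(M))$). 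Composition with $G\to\Stab$ gives a strict isometric $G$-action on $M_\rho$, extending to $\overline{\overline M}_\rho$.

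Finally, since $\rho\in\mathcal D$, we have a diffeomorphism $\overline{\overline M}_\rho\cong M$ realizing the canonical $\pi_1$ identification (via Marden's isomorphism theorem, or Waldhausen's theorem applied to the pared $K(\pi,1)$). Transporting the strict action yields a strict $G$-action on $M$ inducing the same $G\to\Out(\pi_1(M))$ as $\varphi$. Since $M$ is aspherical, Lemma~\ref{mapskpione} identifies the space of self-homotopy-equivalences in any fixed outer class with a $K(Z,1)$, and an inductive obstruction-theoretic argument (using Lemma~\ref{mapskpione} throughout on the higher coherence data $\varphi^k$) shows that $\varphi$ and this strict action lie in the same component of $\Hom^h(G,\Homeq(M))$. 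The principal technical obstacle is producing the fixed Kleinian group $\rho$; once this is in hand, Mostow rigidity and the asphericity of $M$ complete the argument essentially formally.
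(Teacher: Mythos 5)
Your proposal follows the same route as the paper: parametrize the geometrically finite deformation space by the Teichm\"uller space of the conformal boundary (Ahlfors--Bers--Kra--Maskit), find a $G$-fixed point there (Kerckhoff/Wolpert), realize the stabilizer by isometries via Mostow--Prasad--Marden rigidity, and promote the homotopy action to this strict action using that the components of $\Homeq(M)$ are contractible (since the Kleinian group is nonelementary, hence centerless). The only cosmetic difference is that you route through the character stack and $\Out(\pi_1M)$ where the paper works directly with $\pi_0\Homeq$, and your citation of Proposition~\ref{nielsensurfacekpione} for the fixed point is slightly indirect --- what is really needed there is Kerckhoff's underlying theorem that $\Teich(\partial M\setminus P)^G$ is nonempty, which the paper invokes directly.
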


\begin{proof}
By assumption, $M=\overline{\overline M}_\rho$ for some geometrically finite $\rho:\Gamma\to\PGL_2\CC$ with pared structure $(\partial M)_{\chi=0}$.
Note that all boundary components of $M$ have non-positive Euler characteristic (any $S^2$ inside $M$ lifts to $\HH^3$, where it bounds a $B^3$, and hence also bounds a $B^3$ in $M$).
A choice of $\rho$ gives rise to an isotopy class of conformal structure $\xi_\rho\in\Teich(\partial M)$ and thus also to $\xi_\rho^-\in\Teich((\partial M)_{\chi<0})$.
By Bers \cite{bers} (see also Kra \cite{kra} and Maskit \cite{maskit}), we may deform $\rho$ (by \emph{quasi-conformal conjugacy}) so as to induce any arbitrary $\xi^-\in\Teich((\partial M)_{\chi<0})$.
By Kerckhoff \cite{kerckhoff} (or Wolpert \cite{wolpert}), there exists such a $\xi^-\in\Teich((\partial M)_{\chi<0})$ which is fixed by the action of $G$.
Fix any $\rho$ whose induced $\xi_\rho^-$ is such a fixed point, and consider $M$ equipped with the hyperbolic metric associated to $\rho$.
By Mostow/Prasad/Marden rigidity \cite{mostow,prasad,marden}, every element of $\pi_0\Homeq(M\setminus(\partial M)_{\chi=0})$ preserving $\xi_\rho^-$ is represented by a unique isometry of $M$.
In particular, this implies that there is a strict action of $G$ on $M$ which coincides on $\pi_0\Homeq(M\setminus(\partial M)_{\chi=0})$ with our given action.
Finally, note that the maps $\Homeq(M)\to\Homeq(M\setminus(\partial M)_{\chi=0})\to\pi_0\Homeq(M\setminus(\partial M)_{\chi=0})$ are all homotopy equivalences.
\end{proof}

\subsection{Some three-manifold topology}

We recall some well known fundamental results in three-manifold topology.

\begin{definition}
A three-manifold-with-boundary $M$ is called \emph{irreducible} iff every embedded $S^2$ inside $M$ is the boundary of an embedded $B^3$.
It is called \emph{$P^2$-irreducible} iff it is irreducible and there exists no two-sided embedding $P^2\hookrightarrow M$.
By the sphere theorem \cite{papakyriakopoulos,stallings,hempel}, if $M$ is $P^2$-irreducible then $\pi_2(M)=0$.
\end{definition}

\begin{lemma}\label{mfldkpione}
A $P^2$-irreducible three-manifold which is either non-compact or has infinite fundamental group is a $K(\pi,1)$.
\end{lemma}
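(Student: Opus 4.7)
The plan is to show that the universal cover $\widetilde M$ is contractible, which is equivalent to $M$ being a $K(\pi,1)$. Since $\widetilde M$ is a smooth (hence CW) manifold, by Whitehead's theorem it suffices to show that $\widetilde M$ is weakly contractible.

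First I would observe that $\widetilde M$ is simply connected by construction, and $\pi_2(\widetilde M)=\pi_2(M)=0$ by the sphere theorem (already invoked in the definition of $P^2$-irreducible just above the statement). Next I would argue that $\widetilde M$ is non-compact. If $M$ itself is non-compact then so is any cover. Otherwise $M$ is compact with $\pi_1(M)$ infinite, so the covering $\widetilde M\to M$ has infinite fiber, hence $\widetilde M$ is non-compact.

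Now since $\widetilde M$ is a non-compact three-manifold (necessarily orientable, being simply connected), we have $H_i(\widetilde M;\ZZ)=0$ for all $i\geq 3$. Combined with $H_0(\widetilde M)=\ZZ$, $H_1(\widetilde M)=0$ (from simple connectivity), and $H_2(\widetilde M)\cong\pi_2(\widetilde M)=0$ (Hurewicz), we conclude that $\widetilde M$ has the integral homology of a point.

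Finally I would run Hurewicz inductively: assuming $\pi_i(\widetilde M)=0$ for $2\leq i<n$, the Hurewicz theorem gives $\pi_n(\widetilde M)\cong H_n(\widetilde M)=0$. Hence all homotopy groups of $\widetilde M$ vanish, so $\widetilde M$ is weakly contractible, hence contractible, and $M=K(\pi,1)$. The only real input is the sphere theorem; the rest is formal, so I do not anticipate any genuine obstacle, just a careful verification that the non-compactness hypothesis is correctly used to kill $H_3$.
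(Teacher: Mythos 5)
Your proposal is correct and takes exactly the approach of the paper: use the sphere theorem to get $\pi_2(\widetilde M)=0$, observe that the hypothesis forces the universal cover $\widetilde M$ to be non-compact, conclude $H_3(\widetilde M)=0$, and then invoke Hurewicz (inductively) plus Whitehead. You have simply spelled out the steps that the paper's one-line proof compresses.
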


\begin{proof}
Since $\pi_2=0$, to check that the universal cover is contractible, it is enough (by Hurewicz) to show that its $H_3$ vanishes, which follows since it is non-compact.
\end{proof}

\begin{definition}
A compact properly embedded surface-with-boundary $(F,\partial F)\hookrightarrow(M,\partial M)$ inside a $P^2$-irreducible three-manifold-with-boundary is called \emph{incompressible} iff every properly embedded disk $(D^2,\partial D^2)\hookrightarrow(M,F)$ (disjoint from $F$ except along its boundary) is parallel to an embedded disk inside $F$ and no component of $F$ is $S^2$.
By the loop theorem \cite{papakyriakopoulos,stallings,hempel}, a two-sided surface is incompressible iff $\pi_1(F)\to\pi_1(M)$ is injective.
\end{definition}

An innermost disk argument shows that a surface is incompressible iff each of its components is incompressible.

\begin{theorem}[Waldhausen \cite{waldhausen,heilirreducible,hatcherpirreducible,waldhausensurvey}]\label{homotopydiffeomanifolds}
Let $M$ and $N$ be compact connected $P^2$-irreducible three-manifolds-with-boundary, each of which contains a two-sided incompressible surface.
Every homotopy equivalence of pairs $(M,\partial M)\to(N,\partial N)$ is homotopic through maps of pairs to a diffeomorphism.
\qed
\end{theorem}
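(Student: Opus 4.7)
The plan is to prove the theorem by induction on a \emph{Haken hierarchy} for $M$ (and $N$). Since $M$ is $P^2$-irreducible and contains a two-sided incompressible surface, one may repeatedly cut $M$ along two-sided incompressible surfaces with boundary in $\partial M$; each cut strictly decreases an appropriate complexity (for instance, one may use that $-\chi$ of the boundary drops, or use Haken's finiteness result on systems of incompressible surfaces) and the procedure terminates after finitely many steps with a disjoint union of $3$-balls. The inductive hypothesis is that homotopy equivalences of pairs between $P^2$-irreducible three-manifolds of smaller hierarchy length are homotopic to diffeomorphisms.

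The base case is when both $M$ and $N$ are (disjoint unions of) $3$-balls. A homotopy equivalence of pairs $(B^3,S^2)\to(B^3,S^2)$ restricts to a homotopy equivalence $S^2\to S^2$, which by the smoothing theory for surface homeomorphisms (or directly) is homotopic to a diffeomorphism; extending by the Alexander trick (coning) then gives a diffeomorphism of pairs homotopic to $f$.

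For the inductive step, pick a two-sided incompressible surface $F\hookrightarrow N$ meeting $\partial N$ transversely (with $\partial F\subseteq\partial N$ incompressible in $\partial N$). The key technical task is to homotope $f:(M,\partial M)\to(N,\partial N)$ so that $f^{-1}(F)$ is a properly embedded two-sided \emph{incompressible} surface in $M$. First, homotope $f$ to be transverse to $F$ so that $f^{-1}(F)$ is a properly embedded surface. Then eliminate sphere and disk components of $f^{-1}(F)$ using the $P^2$-irreducibility of $M$ (spheres bound balls, allowing the map to be pushed off). Next, remove compressions of $f^{-1}(F)$: a compressing disk for $f^{-1}(F)$ in $M$ maps to $N$ where its boundary is null-homotopic in $F$ (since $\pi_1(F)\hookrightarrow\pi_1(N)$ by incompressibility), and this can be used together with the loop theorem/Dehn's lemma and a standard innermost argument to homotope $f$ so as to reduce the number of components of $f^{-1}(F)$ or simplify its topology. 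Iterating, one arrives at $f^{-1}(F)$ incompressible.

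Now cut: let $M'=M\setminus\Nbd(f^{-1}(F))$ and $N'=N\setminus\Nbd(F)$. The restriction $f:(M',\partial M')\to(N',\partial N')$ is a homotopy equivalence of pairs (this uses that $F$ and $f^{-1}(F)$ are $\pi_1$-injective, plus a Mayer--Vietoris/van Kampen bookkeeping argument to verify $f|_{M'}$ induces isomorphisms on $\pi_1$ of each component and on relative homology). Moreover $M'$ and $N'$ remain $P^2$-irreducible (cutting along an incompressible two-sided surface preserves this), and their hierarchy length is strictly smaller. Apply the inductive hypothesis to deform $f|_{M'}$ to a diffeomorphism $g':(M',\partial M')\to(N',\partial N')$. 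Finally, reglue: the two copies of $f^{-1}(F)$ (resp.\ $F$) in $\partial M'$ (resp.\ $\partial N'$) are interchanged by the gluing, and $g'$ may, after a further boundary homotopy adjusting the identifications on $\partial\Nbd F$ (using that homotopy equivalences of surfaces which are $K(\pi,1)$'s are homotopic to diffeomorphisms, cf.\ the discussion preceding Proposition \ref{nielsensurfacehyperbolic}), be glued to a diffeomorphism $g:(M,\partial M)\to(N,\partial N)$ homotopic to $f$.

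The main obstacle is the compression/simplification step that forces $f^{-1}(F)$ to be incompressible: one must organize the applications of the loop theorem and innermost-disk exchanges so that the complexity of $(f,F)$ strictly decreases, while simultaneously keeping $f$ a map of pairs. A secondary subtlety is verifying that cutting truly produces a homotopy equivalence of pairs between the complementary pieces (rather than merely a map inducing $\pi_1$-isomorphisms), which is where the $K(\pi,1)$ property from Lemma \ref{mfldkpione} (applied to each piece) is used to promote the $\pi_1$-isomorphism to a homotopy equivalence.
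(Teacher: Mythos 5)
The paper does not prove this result; it is stated as a citation to Waldhausen's theorem (with Heil's extension to the non-orientable $P^2$-irreducible case and expositions by Hatcher and Waldhausen's survey), with a $\qed$ attached to the statement. Your sketch follows the standard hierarchy-induction strategy of those references, which is the right idea, but it glosses over the one point that makes Waldhausen's proof delicate, namely the regluing step.

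The issue is that your inductive hypothesis is the \emph{absolute} statement, so applying it to $f|_{M'}:(M',\partial M')\to(N',\partial N')$ gives a diffeomorphism $g'$ whose restriction to the two copies $\Sigma_\pm$ of $f^{-1}(F)$ in $\partial M'$ is uncontrolled. You propose to fix $g'$ by a boundary isotopy so that $g'|_{\Sigma_+}$ and $g'|_{\Sigma_-}$ agree under the gluing identifications; that is indeed possible (homotopic diffeomorphisms of a $K(\pi,1)$ surface are isotopic). But this only makes the \emph{diffeomorphism} glue to some $g:M\to N$; it does not give you a glued \emph{homotopy} from $g$ to $f$, because the two homotopies $g'|_{\Sigma_\pm}\simeq f|_{\Sigma_\pm}$ need not correspond under the identifications, and since $\pi_1(M')\to\pi_1(M)$ is not surjective you cannot promote $g\simeq f$ on $M'$ to $g\simeq f$ on $M$ for free. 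When $F$ has trivial $\pi_1$-center this difference loop in $\Maps(\Sigma,F)$ vanishes, but when $F$ is a torus (or Klein bottle, annulus) the center is nontrivial and there is a genuine obstruction, which must be killed by post-composing $g'$ with a suitable partial Dehn twist in a collar of $\Sigma_\pm$. Waldhausen avoids this whole discussion by running a \emph{relative} induction: he first homotopes $f$ so that $f|_{\partial M}$ and $f|_{f^{-1}(F)}$ are homeomorphisms, and the inductive statement then produces a diffeomorphism of the cut pieces that agrees with $f$ on the boundary rel which the homotopy is constant, so both the diffeomorphisms and the homotopies glue tautologically. Your sketch never normalizes $f$ on the boundary or on $f^{-1}(F)$, and without a relative inductive hypothesis the induction does not obviously close. (A minor secondary point: the Alexander trick gives a homeomorphism of $B^3$, not a diffeomorphism; for the base case you want Smale's $\Diff(S^2)\simeq O(3)$ to extend a diffeomorphism of $S^2$ radially.)
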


Note that every compact connected $P^2$-irreducible three-manifold-with-boundary with nonzero $H^1$ (which is implied if the boundary is non-empty) contains a two-sided incompressible surface (take a maximal compression of a co-oriented surface representing the Poincar\'e dual of a nonzero element of $H^1$).

\subsection{Nielsen realization for irreducible three-manifolds embedding into \texorpdfstring{$\RR^3$}{R\^{}3}}

We now combine the results of the previous two subsections using the JSJ decomposition which we now recall.

\begin{definition}
An orientable three-manifold-with-boundary $M$ is called \emph{atoroidal} iff every incompressible $T^2\hookrightarrow M$ is boundary parallel.
\end{definition}

\begin{theorem}[{JSJ Decomposition \cite{jacoshalen,johannson,neumannswarup} \cite[\S 2]{canarymccullough}}]\label{jsjdecomposition}
Let $M$ be a compact, orientable, irreducible, three-manifold-with-boundary.
There exists a unique up to isotopy minimal disjoint union of incompressible tori $\underline T\subseteq M$ such that every component of $M\setminus N^\circ\underline T$ is either Seifert fibered or atoroidal.
\qed
\end{theorem}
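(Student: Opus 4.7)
My plan is to prove existence first, then derive the minimal system, and finally address uniqueness. The whole argument rests on two standard inputs: Haken's finiteness theorem (bounding the number of pairwise disjoint, pairwise non-parallel incompressible surfaces in a compact irreducible $M$) and the classification of incompressible tori in Seifert fibered spaces (vertical or horizontal, with horizontal tori forcing the manifold to be among a short list of $T^2$- or Klein-bottle-bundles).

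For existence, I would take a collection $\underline T$ of pairwise disjoint, pairwise non-parallel, incompressible, non-boundary-parallel tori in $M$ that is maximal (necessarily finite by Haken finiteness). The claim is that each component $N$ of $M\setminus N^\circ\underline T$ is then either atoroidal or Seifert fibered. If $N$ is not atoroidal, pick an essential torus $T\subseteq N$. By maximality of $\underline T$, the torus $T$ must be parallel in $M$ either to some component of $\underline T$ or to a boundary component of $M$; since $T$ is essential in $N$, the parallelism region crosses a component of $\underline T$, and by irreducibility one can then fit $N$ into a ``characteristic pair'' framework and, using the annulus/torus theorem, show $N$ carries a Seifert fibration whose fibers are those longitudes detected by the parallelism. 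One then passes from the maximal system to a minimal one by discarding, one at a time, any torus of $\underline T$ whose removal fuses two Seifert fibered pieces into a single Seifert fibered piece; the resulting system still has each complementary piece atoroidal or Seifert fibered, and is minimal by construction.

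For uniqueness I would take two minimal systems $\underline T,\underline T'$ and isotope them into general position. Intersections are circles lying on tori, so each is trivial or essential on both surfaces. Innermost-disk arguments together with incompressibility and irreducibility remove trivial intersections. For essential intersections, one performs the standard cut-and-paste on annular regions (pairing innermost annuli on $\underline T$ with their counterparts on $\underline T'$) to strictly reduce the number of intersection circles, iterating until $\underline T$ and $\underline T'$ are disjoint. Once disjoint, each $T\in\underline T$ lies in some piece $N'$ of the $\underline T'$-decomposition: if $N'$ is atoroidal, then $T$ is boundary parallel in $N'$, hence isotopic (in $M$) to a component of $\underline T'\cup\partial M$; if $N'$ is Seifert fibered, the classification of incompressible tori makes $T$ isotopic to a vertical torus over a simple closed curve in the base orbifold, and minimality of both systems forces this vertical torus to already be a component of $\underline T'$ (otherwise one of the two systems could be further reduced).

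The main obstacle is the uniqueness step inside Seifert fibered pieces: one must exclude horizontal tori and control how two distinct vertical systems in the same base orbifold can differ. The vertical case is handled by invoking the fact that a minimal system of disjoint, pairwise non-parallel essential simple closed curves on a hyperbolic or Euclidean surface orbifold is unique up to isotopy only modulo the choice of fibration, so one has to verify that the Seifert structures induced on a piece by the two systems agree on the overlap (which uses uniqueness of the Seifert fibration on pieces other than the short list of exceptions $D^2\times S^1$, $T^2\times I$, etc., where one checks the statement by hand). Bootstrapping this piece-by-piece match, component by component, finally produces an ambient isotopy of $M$ carrying $\underline T$ to $\underline T'$.
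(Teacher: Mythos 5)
The paper does not prove Theorem \ref{jsjdecomposition}: it is stated without proof, with citations to Jaco--Shalen, Johannson, Neumann--Swarup, and Canary--McCullough, and is used as a black box, so there is no in-paper argument for you to compare against. Evaluating your sketch on its own merits: the existence half is workable but overcomplicated. For a maximal disjoint collection $\underline T$ of pairwise non-parallel, non-boundary-parallel incompressible tori, every complementary piece is in fact \emph{atoroidal}: the parallelism-region argument you begin, pushed to its conclusion via Lemma \ref{cylinderincompressible}, shows that a putative essential torus $T\subseteq N$ would be boundary-parallel in $N$, a contradiction. So the annulus/torus theorem is not needed for existence, and ``atoroidal or Seifert fibered'' holds trivially; a system of least cardinality with the required property then exists by Haken finiteness. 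Also, ``minimal by construction'' does not follow: your discard-until-stuck procedure yields an irredundant system, which is not a priori one of smallest cardinality, and equating the two is essentially the uniqueness statement.

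The serious gap is in uniqueness, at the step where you claim two minimal systems $\underline T$, $\underline T'$ can be isotoped to be disjoint by innermost-disk and cut-and-paste surgery. This is not justified: cut-and-paste of incompressible tori along essential intersection circles need not produce incompressible tori, need not preserve the property that complementary pieces are atoroidal or Seifert fibered, and does not obviously terminate. The phenomenon that two incompressible tori in an irreducible manifold which cannot be isotoped apart must cobound or lie in a common Seifert fibered region is exactly the content of the torus/annulus theorem and the characteristic submanifold theory of Jaco--Shalen and Johannson --- it is the engine of the JSJ theorem, not a reduction of it. You correctly identify the Seifert fibered pieces as the ``main obstacle,'' but the fix you propose (uniqueness of curve systems on the base orbifold) presupposes that $\underline T$ and $\underline T'$ are already vertical for a common Seifert structure on the overlap, which is precisely what the disjointness step was supposed to establish, making the argument circular. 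The published proofs (Jaco--Shalen, Johannson, or the shorter Neumann--Swarup argument) fill this in with a careful analysis of how essential annuli and tori sit inside the pieces, and that analysis is what your sketch is missing.
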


\begin{theorem}\label{nielsenall}
Let $M\ne B^3$ be a compact irreducible three-manifold-with-boundary which embeds into $\RR^3$.
Every homotopy action by homotopy equivalences of a finite group on $M$ is homotopic to a strict action.
\end{theorem}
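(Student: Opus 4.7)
The plan is to apply the JSJ decomposition (Theorem \ref{jsjdecomposition}) to $M$, obtaining an up-to-isotopy canonical system of incompressible tori $\underline T$ whose complementary pieces are either Seifert fibered or atoroidal; strictify the induced homotopy action on each piece using the Nielsen realization results of this section; and then glue.

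\textbf{Step 1 (JSJ equivariance).} Since $\underline T$ is unique up to isotopy, every self-homotopy-equivalence of $M$ permutes the components of $\underline T$ up to isotopy. Using Theorem \ref{homotopydiffeomanifolds} to control the fiber of the map from $\Homeq(M,\underline T)$ into the subspace of $\Homeq(M)$ preserving the isotopy class of $\underline T$, I would deform the given homotopy action $G\hacts M$ to one preserving $\underline T$ setwise. Take a small $G$-equivariant regular neighborhood $N\underline T$ (a disjoint union of $T^2\times I$ pieces permuted by $G$) and let $N_1,\ldots,N_r$ be the components of the closure of $M\setminus N\underline T$; the $N_i$ and the components of $N\underline T$ are each $G$-permuted.

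\textbf{Step 2 (Absolute strictification per piece).} Each $N_i$ is compact, irreducible, embeds in $\RR^3$, is not $B^3$, and has nonempty boundary consisting of tori (from the JSJ cut) and possibly higher-genus surfaces (from $\partial M$). If $N_i$ is Seifert fibered, Lemma \ref{seifertinthreespace} yields three subcases: hyperbolic base orbifold (Proposition \ref{nielsenseifert}), $D^2\times S^1$ (Lemma \ref{nielsendtwosone}), or $T^2\times I$ (Lemma \ref{nielsenttwoi}). If $N_i$ is atoroidal and not Seifert, Thurston's geometrization theorem for Haken manifolds equips $N_i^\circ$ with a geometrically finite hyperbolic structure whose pared locus is $(\partial N_i)_{\chi=0}$, so Theorem \ref{nielsenhyperbolic} applies. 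Each $T^2\times I$ piece of $N\underline T$ is strictified by Lemma \ref{nielsenttwoi}.

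\textbf{Step 3 (Gluing) and main obstacle.} I would glue in two passes: first strictify each $N_i$ absolutely by Step 2; then on each $T^2\times I$ piece of $N\underline T$, the induced homotopy action is already strict on its boundary tori (matching the strict actions on the adjacent $N_i$), so I extend strictness into a boundary collar via Corollary \ref{stricthascollar} and invoke Lemma \ref{nielsenttwoi} together with Corollary \ref{relsurfaceboundary} (applicable since each $T^2\times I$ has boundary a union of tori, which are $K(\pi,1)$'s different from $S^2$ and $P^2$) to strictify the whole $T^2\times I$ piece in a manner agreeing with the $N_i$ strictifications on $\Nbd\partial(T^2\times I)$. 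The assembled pieces yield a strict $G$-action on $M$ homotopic to the original. I expect the main obstacle to be Step 1 --- arranging that the homotopy action preserves $\underline T$ strictly, not just up to isotopy --- which requires a parameterized uniqueness statement for JSJ that leverages Theorem \ref{homotopydiffeomanifolds}; the second major input, Thurston's hyperbolization theorem invoked in Step 2 to handle the atoroidal pieces, is also substantial but serves as a standard black box.
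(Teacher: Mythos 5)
Your proposal follows essentially the same route as the paper: reduce to diffeomorphisms by Waldhausen, use uniqueness of JSJ to make $\varphi^0$ preserve $\underline T$, promote this to all $\varphi^k$, cut along the tori, and strictify the pieces via the Seifert and hyperbolic Nielsen-realization results (with Thurston's hyperbolization providing the geometric structure on atoroidal pieces), gluing rel boundary via Corollary \ref{relsurfaceboundary}.

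The one step you flag as the main obstacle --- making the whole homotopy action, and not merely $\varphi^0$, preserve $\underline T$ --- does not actually require a parameterized uniqueness theorem for JSJ. The paper's argument is an induction on the cubical level $k$: once $\varphi^0$ lands in diffeomorphisms preserving $\underline T$, one only needs that $\Homeq(\underline T)\hookrightarrow\Maps(\underline T,M)$ is a homotopy equivalence onto the components it hits, which by Lemma \ref{mapskpione} reduces to the group-theoretic fact (Heil \cite{heilnormalizers,heilincompressibleII}) that $N_{\pi_1(M)}(\pi_1(T))=\pi_1(T)$ for each JSJ torus $T$. The same style of argument, now using only $\pi_1$-injectivity of $\partial N$, handles making the higher $\varphi^k$ respect the complementary pieces. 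Your reordering of the gluing --- strictify each $N_i$ absolutely, then strictify the $T^2\times I$ collars rel boundary via Lemma \ref{nielsenttwoi} and Corollary \ref{relsurfaceboundary} --- is a harmless variant of the paper's order (strictify near $\underline T$ first, then pieces rel boundary), and has the small advantage that Corollary \ref{relsurfaceboundary} is invoked only on manifolds whose entire boundary is where the action is already strict.
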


\begin{proof}
Denote by $\varphi$ the given homotopy action.
By Theorem \ref{homotopydiffeomanifolds} of Waldhausen, we may deform $\varphi^0$ so that it lands in diffeomorphisms.
This deformation of $\varphi^0$ may be lifted to a deformation of $\varphi$; indeed any deformation of $\varphi^{k-1}$ lifts to a deformation of $\varphi^k$ since the boundary of $G^{k+1}\times[0,1]^k$ is collared.

Let $\underline T\subseteq M$ be a JSJ decomposition as in Theorem \ref{jsjdecomposition}.
As the isotopy class of $\underline T$ is unique, we conclude that $\varphi^0(g)(\underline T)$ is isotopic to $\underline T$.
We may thus further deform $\varphi$ so that $\varphi^0$ lands in diffeomorphisms preserving $\underline T$.

Next, let us deform $\varphi$ so that it (i.e.\ all $\varphi^k$) maps $\underline T$ to itself.
This holds already for $\varphi^0$, and we proceed by induction on $k\geq 1$.
For the inductive step, it suffices to know that $\Homeq(\underline T)\to\Maps(\underline T,M)$ is a homotopy equivalence onto the components in its image.
By Lemma \ref{mapskpione}, this is equivalent to knowing that $N_{\pi_1(M)}(\pi_1(T))=\pi_1(T)$ for every component $T$ of $\underline T$, which holds by \cite{heilnormalizers,heilincompressibleII} (a two-sided incompressible surface in a $P^2$-irreducible three-manifold-with-boundary has nontrivial normalizer iff it is the fiber of a fibration over $S^1$ or the boundary of a regular neighborhood of a one-sided surface).
Now that we have deformed $\varphi$ so that it stabilizes $\underline T$, we may apply Proposition \ref{nielsensurfacekpione} to further deform $\varphi$ so that its restriction to $\underline T$ is a strict action (note that we may perform this deformation preserving the property that $\varphi^0$ lands in diffeomorphisms stabilizing $\underline T$).
In fact, we may now even deform $\varphi$ so its restriction to a neighborhood of $\underline T$ is strict.

Finally, let us deform $\varphi$ so that it preserves the partition into components of $M\setminus\underline T$.
This holds already for $\varphi^0$, and we proceed by induction on $k\geq 1$.
We can simply deal with each component $N\subseteq M$ (compact manifold-with-boundary) separately, and it suffices to show that
\begin{equation}
\Maps((N,\partial N),(N,\partial N))\hookrightarrow\Maps((N,\partial N),(M,\partial N))
\end{equation}
is a homotopy equivalence onto the components in its image.
To analyze this inclusion, begin with the equality $\Maps((\partial N,\partial N),(N,\partial N))=\Maps((\partial N,\partial N),(M,\partial N))$ and add $k$-cells to $\partial N$ one by one to build $N$ and thus produce the above inclusion of interest.
The effect of adding a $k$-cell is that both sides get replaced by the total spaces of fibrations over them with fiber either empty or $\Omega^kN$ and $\Omega^kM$, respectively.
We may assume $k\geq 1$ since $N$ is connected and $\partial N\ne\varnothing$ (the case $\partial N=\varnothing$ only happens when $N=M$ in which case there is nothing to prove).
Now both $N$ and $M$ are $K(\pi,1)$ spaces (Lemma \ref{mfldkpione}), so for $k\geq 2$ both $\Omega^kN$ and $\Omega^kM$ are contractible, and for $k=1$ they are homotopy equivalent to $\pi_1(N)$ and $\pi_1(M)$, respectively.
It is thus enough to know that $\pi_1(N)\to\pi_1(M)$ is injective, which holds since $\partial N$ is incompressible.

Now we have deformed $\varphi$ so that it restricts to a strict action on a neighborhood of $\underline T$ and respects the partition of $M$ into components of $M\setminus\underline T$.
The resulting action on the pieces of this partition is again by homotopy equivalences.
Each of the pieces is either atoroidal or Seifert fibered.
The atoroidal pieces are hyperbolic by Thurston \cite{morgan}.
The Seifert fibered pieces all have hyperbolic base orbifold by Lemma \ref{seifertinthreespace} (there can be no $T^2\times I$ or $D^2\times S^1$ pieces unless they are the entire $M$, cases which are covered by Lemmas \ref{nielsenttwoi} and \ref{nielsendtwosone}).
Hence we may conclude by applying Theorem \ref{nielsenhyperbolic} and Proposition \ref{nielsenseifert}, as augmented by Corollary \ref{relsurfaceboundary} to be `rel boundary'.
\end{proof}

\section{A lattice of codimension zero submanifolds}\label{sectionlattice}

This section defines for certain three-manifolds a lattice of codimension zero submanifolds with incompressible boundary, generalizing the setup of \cite[\S 2]{pardonhilbertsmith}.

\subsection{Inside a surface}\label{sectionlatticesurface}

We begin with a discussion of the analogous lattice in one lower dimension, namely for surfaces, where everything is essentially elementary.

Let $F$ be a surface (without boundary, possibly non-compact).
We denote by $\sL(F)$ the set of isotopy classes of codimension zero submanifolds-with-boundary $A\subseteq F$ for which $\partial A\subseteq F$ is a compact multi-curve, such that neither $A$ nor $A^\complement:=F\setminus A^\circ$ have any components diffeomorphic to $D^2$, $S^1\times I$, or $S^1\tildetimes I$.
This implies that $\partial A$ consists of pairwise non-isotopic essential curves on $F$.
The isotopy class of any such $A\subseteq F$ is contractible.

There is a partial order on $\sL(F)$ by inclusion.
Namely, $\A\leq\A'$ iff there are representatives $A,A'\subseteq F$ of $\A$ and $\A'$ with $A\subseteq A'$.
Obviously $A\mapsto A^\complement$ is an order reversing involution of $\sL(F)$.

Equipped with this partial order, $\sL(F)$ is a lattice, namely every finite subset $S\subseteq\sL(F)$ has a least upper bound.
To see this, fix a hyperbolic metric on $F$ with no parabolics, so every isotopy class of simple closed curve has a unique geodesic representative, which is length minimizing.
Now every isotopy class $\A\in\sL(F)$ has a unique representative $A\subseteq F$ whose boundary consists of a disjoint union of length minimizing geodesics.
These representatives simultaneously realize all order relations, in the sense that for such $A,A'\subseteq F$, if $[A]\leq[A']$ then $A\subseteq A'$.
This implies the lattice property as follows.
For finite $S\subseteq\sL(F)$, consider the unique representatives $A_s\subseteq F$ whose boundaries $\partial A_s$ are disjoint unions of length minimizing geodesics.
The union $\bigcup_{s\in S}A_s\subseteq F$ will not have any $D^2$, $S^1\times I$, or $S^1\tildetimes I$ components, however its complement may have such.
Adding in these disallowed components produces the desired least upper bound, due to the fact that any $\A'\in\sL(F)$ with $\A'\geq[A_s]$ for all $s\in S$ has a representative $A'$ with $A'\supseteq A_s$ for all $s\in S$.

\subsection{Inside a three-manifold}\label{sectionlatticethreemanifold}

Let $M$ be a $P^2$-irreducible three-manifold-with-boundary, and let $A\subseteq\partial M$ be a codimension zero submanifold-with-boundary representing an element of $\sL(\partial M)$.
We will define a lattice $\sL(M;A)$.

Throughout this subsection, the notation $B\subseteq M$ (or its decorations such as $B'$, $B_1$, $\bar B$, etc.) will always indicate a codimension zero submanifold-with-boundary such that $B\cap\partial M=A$ and $\partial B\subseteq M$ is a compact properly embedded surface-with-boundary.
(Here and below we will, contrary to the usual meaning of $\partial$, use $\partial B$ to denote the boundary of $B$ as a subset of $M$ in the sense of point set topology.)

Given any $B\subseteq M$, we may perform any of the following operations:
\begin{itemize}
\item Removal of a neighborhood of a disk $(D^2,\partial D^2)\hookrightarrow(B,\partial B)$ with essential boundary.
\item Removal of a component of $(B,\partial B)$ which is diffeomorphic to $(B^3,S^2)$.
\item Removal of a component of $(B,\partial B)$ which is diffeomorphic to $(F\times I,F\times\partial I)$ or $(F\tildetimes I,F\tildetimes\partial I)$ for a closed surface $F$.
\end{itemize}
Such an operation, applied to either $B$ or $B^\complement$, will be called a compression, and $B$ is called incompressible if it admits no such operations.
If $B$ is incompressible, then it is $P^2$-irreducible (since $M$ is $P^2$-irreducible and $B^3$ contains no closed incompressible surfaces).

To continue exploring the properties of incompressible $B\subseteq M$, let us begin by recalling the following well known result.

\begin{lemma}\label{cylinderincompressible}
Every incompressible surface inside $F\times I$ with boundary $\partial F\times\{*\}$ is isotopic to $F\times\{*\}$.
The same holds for twisted $I$-bundles $F\tildetimes I$, though with the additional possibility $F\tildetimes\partial I$ when $F$ is closed.
\end{lemma}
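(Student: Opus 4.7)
The plan is to establish a horizontal/vertical dichotomy for $S$ and then use the boundary condition to rule out verticality; the argument is essentially classical, going back to Waldhausen's analysis of incompressible surfaces in $I$-bundles.

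First I reduce $S\cap(F\times\{0\})$ and $S\cap(F\times\{1\})$ to empty. Assume $S$ is in general position and minimizes $|S\cap(F\times\{0\})|$ within its isotopy class rel $\partial S$; the intersection is a disjoint union of simple closed curves in $\operatorname{int}(F)\times\{0\}$. If nonempty, an innermost curve $C$ on $F\times\{0\}$ bounds a disk $D\subseteq F\times\{0\}$ meeting $S$ only in $\partial D=C$. Incompressibility of $S$ forces $C$ to bound a disk $D'\subseteq S$, and then the sphere $D\cup D'$ bounds a ball by $P^2$-irreducibility of $F\times I$. Isotoping $S$ through this ball strictly reduces $|S\cap(F\times\{0\})|$, contradicting minimality. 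Hence $S\cap(F\times\{0\})=\varnothing$, similarly $S\cap(F\times\{1\})=\varnothing$, and so $S\subseteq F\times(0,1)$.

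Next, consider the height function $h:=\operatorname{pr}_I|_S\colon S\to(0,1)$. Perturb $S$ so that $h$ is Morse with $*$ a regular value and $\partial S$ lying in the level $h=*$. Critical points of $h$ correspond to tangencies between $S$ and the horizontal foliation by slices $F\times\{t\}$. Centers (index $0$ or $2$) produce, at a nearby regular level, a small innermost circle of $S\cap(F\times\{t\})$ bounding a disk in $S$; the innermost-disk-plus-irreducibility argument of the previous paragraph then isotopes the tangency away. Saddle tangencies (index $1$) are eliminated by matching them in pairs along the level-set foliation of $S$ via the outermost-arc cut-and-paste from Waldhausen's analysis of surfaces in $I$-bundles, which I would cite rather than rederive. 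After these reductions, the vertical projection $\pi\colon F\times I\to F$ restricts to a submersion $\pi|_S\colon S\to F$, hence a covering map.

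To conclude in the product case with $F$ having boundary: the restriction of $\pi|_S$ to $\partial S=\partial F\times\{*\}$ is a diffeomorphism onto $\partial F$, so the covering $\pi|_S$ is one-sheeted. Then $S$ is the graph of a smooth function $s\colon F\to(0,1)$, and the straight-line interpolation in each fiber between $s(x)$ and $*$ isotopes $S$ to $F\times\{*\}$. In the product case with $F$ closed, an embedded horizontal cover of $F$ inside $F\times I$ is forced to be one-sheeted because the sheets are totally ordered by their $I$-coordinate in each fiber and cannot permute over a connected base. The twisted case is identical when $F$ has boundary; when $F$ is closed (so $\partial S=\varnothing$), $\pi|_S\colon S\to F$ may additionally be a two-sheeted connected cover, which must then be the orientation double cover specified by the twist of the bundle, namely $F\tildetimes\partial I$, and a fiberwise radial isotopy carries $S$ onto this boundary. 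The main obstacle I expect is the saddle-elimination step: centers yield to the same innermost-disk move as the first paragraph, but saddles require the paired outermost-arc cut-and-paste that is the technical heart of Waldhausen's argument.
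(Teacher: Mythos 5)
Your high-level strategy (establish a horizontal/vertical dichotomy, then rule out verticality via the boundary condition) is the right one, and the covering-theoretic endgame is fine once you know $\pi|_S$ is a covering. The gap is in the claimed route to horizontality.

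You assert that after eliminating critical points of the height function $h=\operatorname{pr}_I|_S$, the projection $\pi|_S\colon S\to F$ is a submersion, hence a covering. This conclusion does not follow. Criticality of $h$ at $p\in S$ means $T_pS$ is contained in the horizontal plane $T_{\pi(p)}F\oplus 0$, whereas $\pi|_S$ being a local diffeomorphism at $p$ requires $T_pS\cap(0\oplus\RR)=0$, i.e.\ $T_pS$ does not contain the \emph{vertical} fiber direction. These are unrelated: a tangent plane can contain the vertical direction while not being horizontal, in which case $h$ is regular at $p$ but $\pi|_S$ is not a submersion. Worse, eliminating \emph{all} critical points of $h$ is not the right target: when $F$ has boundary it would force $h$ to be constant (since $h|_{\partial S}\equiv *$ and a nonconstant $h$ without interior critical points would have to achieve an extremum in the interior), yet the desired conclusion $S=F\times\{*\}$ is precisely the case $h\equiv *$, where \emph{every} point is a critical point of $h$; and when $F$ is closed a compact $S$ without boundary always has a maximum and a minimum. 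So the ``eliminate all horizontal tangencies'' intermediate state either collapses immediately or is unattainable, and in neither case does it deliver the covering property you need. The horizontal/vertical classification for incompressible surfaces in $I$-bundles is genuinely proved by a different device: one intersects $S$ with the \emph{vertical} annuli and disks lying over a $1$-skeleton of $F$ and simplifies those intersection patterns, rather than sweeping $S$ by horizontal slices.

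That is exactly what the paper does. It fixes a triangulation of $F$ with $1$-skeleton $F^{(1)}$, puts $S$ in general position with respect to $F^{(1)}\times I$, and simplifies $S\cap(\sigma^1\times I)$ over each $1$-simplex $\sigma^1$ by removing trivial arcs and (using incompressibility) circles, leaving only arcs that run vertically from one side to the other. This cuts the problem down to $F=D^2$, where the same transversality game with a sweep-out of $D^2$ by vertical planes, together with incompressibility and Alexander's theorem, finishes. If you wish to keep a Morse-theoretic flavor, the correct function to analyze is not the height $h$ but rather the position of $S$ relative to the vertical $2$-complex $F^{(1)}\times I$; and the conclusion to aim for is ``$\pi|_S$ is a covering,'' not ``$h|_S$ has no critical points.''
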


\begin{proof}
Fix a triangulation of $F$, denoting its $1$-skeleton by $F^{(1)}$.
Put our unknown incompressible surface $G\subseteq F\times I$ with $\partial G=\partial F\times\{*\}$ into general position with respect to $F^{(1)}\times I$.
For any $1$-simplex $\sigma^1\subseteq F^{(1)}$, the curves comprising $G\cap(\sigma^1\times I)$ come in three types: arcs connecting points over the same endpoint of $\sigma^1$, arcs connecting points over different endpoints of $\sigma^1$, and circles.
Arcs of the first type which are innermost may be eliminated by isotoping $G$.
Since $G$ is incompressible, innermost circles may also be eliminated by isotoping $G$.
These simplification operations eventually terminate leaving only arcs of the second type.
This reduces us to the case $F=D^2$.

To treat the case $F=D^2$, put our unknown incompressible surface $G\subseteq F\times I$ with $\partial G=\partial F\times\{*\}$ in general position with respect to the family of planes $L_t\times I\subseteq D^2\times I$ where $L_t=\{t\}\times I\subseteq I\times I=D^2$.
If for any $t$ for which $L_t$ and $G$ are transverse, one of the circular components of $G\cap L_t$ is essential in $G$, we can produce a nontrivial compressing disk for $G$ by iteratively isotoping away innermost inessential interesctions using incompressibility of $G$.
It follows that such $t$ do not exist, and from this we may deduce that $G$ is a disk.
It then follows from Alexander's theorem \cite{alexander} that $G$ is isotopic to $F\times\{*\}$.
\end{proof}

\begin{corollary}
$B\subseteq M$ is incompressible iff $\partial B$ is incompressible, its components are pairwise non-isotopic rel boundary, and none of its components is the boundary of an embedded $F\tildetimes I\subseteq M$ for closed $F$.
\qed
\end{corollary}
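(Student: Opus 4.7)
My plan is to prove both implications by directly matching each of the three allowed compression operations on $B$ to a violation of one of the three listed conditions on $\partial B$. For the direction $(\Leftarrow)$, I would check that each compression, if applicable, immediately contradicts one of the conditions: operation 1 would exhibit a compressing disk for $\partial B$; operation 2 would require an $S^2$ component of $\partial B$, which is explicitly excluded from the definition of incompressibility of a surface; and operation 3 would either produce two components of $\partial B$ parallel rel boundary (in the $F\times I$ case) or a component of $\partial B$ bounding an embedded $F\tildetimes I\subseteq M$.

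For the harder direction $(\Rightarrow)$, suppose $B$ is incompressible. To rule out $S^2$ components of $\partial B$, I would use $P^2$-irreducibility: any such component bounds an embedded $B^3\subseteq M$, which, since $\partial B$ separates $M$ locally, lies entirely in $B$ or in $B^\complement$ and constitutes a $(B^3,S^2)$ component—producing operation 2. To rule out essential compressing disks for $\partial B$, I would put such a disk in general position with $\partial B$ and apply a standard innermost-circle reduction to cut it down to a disk properly embedded in $B$ or $B^\complement$ with essential boundary on $\partial B$, producing operation 1.

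The main obstacle is ruling out parallel components and $F\tildetimes I$-bounds. Suppose $F_1,F_2$ are components of $\partial B$ isotopic rel boundary; I would pick such a pair whose cobounded product region is innermost (possible since $\partial B$ has finitely many components). A standard fact—provable by applying Lemma \ref{cylinderincompressible} inside a regular neighborhood of an ambient isotopy from $F_1$ to $F_2$—produces an embedded $F\times I\subseteq M$ with $F\times\partial I=F_1\cup F_2$ and $\partial F\times I\subseteq\partial M$. By the innermost choice, its interior meets no component of $\partial B$, so it lies entirely in $B^\circ$ or in $(B^\complement)^\circ$; its boundary in $M$ is then precisely $F_1\cup F_2$ together with $\partial F\times I\subseteq\partial M$, forcing $F\times I$ to be an entire component of $B$ or $B^\complement$, and giving operation 3. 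The $F\tildetimes I$-bound case proceeds identically by choosing an innermost such $F\tildetimes I$ and arguing that it must be a component of $B$ or $B^\complement$.

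The trickiest point in executing this plan is justifying the existence of the embedded product region cobounded by two parallel incompressible surfaces and ensuring that innermost reductions produce genuine components rather than merely submanifolds of $B$ or $B^\complement$; once one has the product region, the innermostness argument and the identification of its boundary in $M$ handle the rest cleanly.
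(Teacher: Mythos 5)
Your overall plan is sound and follows the route the paper implicitly has in mind; the paper states no proof because it views the corollary as a routine consequence of Lemma~\ref{cylinderincompressible} together with standard three-manifold facts, and your translation of the three compression operations into the three listed conditions on $\partial B$ (and back) is exactly the right organizing principle. The $(\Leftarrow)$ direction as you describe it is correct.

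There is, however, a genuine gap in your $(\Rightarrow)$ argument at the step you yourself flag as trickiest: the production of the embedded product region $F\times I$ cobounded by two disjoint isotopic components $F_1,F_2$ of $\partial B$. You propose to obtain it ``by applying Lemma~\ref{cylinderincompressible} inside a regular neighborhood of an ambient isotopy from $F_1$ to $F_2$,'' but this does not work: the track $\bigcup_t\Phi_t(F_1)$ of an ambient isotopy is in general not an embedded submanifold (it can have arbitrary self-intersections and need not even be closed), so ``a regular neighborhood of the isotopy'' is not a well-defined $F\times I$ in which Lemma~\ref{cylinderincompressible} could be invoked. Lemma~\ref{cylinderincompressible} tells you what incompressible surfaces \emph{inside} a given $I$-bundle look like; it does not produce the $I$-bundle. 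What you actually need here is the theorem of Waldhausen \cite[Corollary 5.5]{waldhausen} (or Johannson \cite[Proposition 19.1]{johannson}) that disjoint isotopic two-sided incompressible surfaces in a $P^2$-irreducible three-manifold cobound a product region --- exactly the fact the paper records in the paragraph immediately following this corollary and uses to prove Proposition~\ref{propcylinder}. Note that you may legitimately invoke this only after you have already established (from incompressibility of $B$) that $\partial B$ is incompressible, so the order of your three verifications matters. Once you have the product region from Waldhausen, your innermost-region argument correctly reduces to a component of $B$ or $B^\complement$ and produces operation~3, and the twisted $I$-bundle case goes the same way (using that a two-sided surface inside $F\tildetimes I$ cannot be parallel to the one-sided $F\tildetimes\{*\}$).

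A smaller imprecision: in the $S^2$ case you assert that the ball $Q$ bounded by a sphere component of $\partial B$ ``lies entirely in $B$ or in $B^\complement$,'' but $Q^\circ$ may well meet other components of $\partial B$. You need the same innermost reduction you apply later: any non-sphere component of $\partial B$ inside $Q$ would be compressible (closed surfaces in a ball are spheres or compressible), which you have already ruled out via operation~1, so all components of $\partial B$ in $Q$ are spheres, and an innermost one bounds a ball disjoint from $\partial B$ in its interior, giving operation~2.
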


We denote by $\sL(M;A)$ the collection of isotopy classes of incompressible $B\subseteq M$.
There is an obvious inclusion relation on $\sL(M;A)$, namely $\B\leq\B'$ iff there are representatives $B,B'\subseteq M$ of $\B$ and $\B'$ with $B\subseteq B'$.
The next result shows how to produce elements of $\sL(M;A)$ with given order properties.

\begin{lemma}\label{maximalcompression}
Starting with a given $B\subseteq M$, any sequence of compressions eventually terminates at an incompressible $\bar B\subseteq M$.
Furthermore, if $B\subseteq B'\subseteq M$ and $B'$ is incompressible, then any incompressible $\bar B$ obtained from $B$ by iterated compressions satisfies $[\bar B]\subseteq[B']$.
\end{lemma}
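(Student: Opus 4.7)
The lemma has two parts: monotonicity and termination.

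\emph{Monotonicity is tautological.} Each of the three compression operations produces from $B$ a new codimension zero submanifold which is literally a subset of $B$: operation (a) removes an open neighborhood of a compressing disk, while operations (b) and (c) delete an entire connected component. Iterated compressions thus yield a descending chain $B=B_0\supseteq B_1\supseteq\cdots\supseteq\bar B$ of subsets of $M$. If $B\subseteq B'$ with $B'$ incompressible, then $\bar B\subseteq B'$ as subsets of $M$, and these representatives witness $[\bar B]\leq[B']$ directly from the definition of the partial order on $\sL(M;A)$.

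\emph{Termination} requires a complexity. Since $\partial B$ is compact, every compression is supported inside some fixed compact submanifold $N\subseteq M$ containing $\partial B_0$ in its interior. Fix a smooth triangulation $\tau$ of $N$ (with $\partial M\cap N$ a subcomplex), and after isotoping $\partial B$ rel $\partial A$ into normal form with respect to $\tau$, define the complexity
\[
c(B):=\bigl(w(B),\#\pi_0(B)\bigr)\in\ZZ_{\geq 0}^2
\]
ordered lexicographically, where $w(B)$ is the weight of $\partial B$, i.e., its transverse intersection number with the $1$-skeleton of $\tau$. Operation (a) strictly decreases $w(B)$ by Haken's classical weight-reduction argument applied to a normal compressing disk. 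Operations (b) and (c) delete a component of $B$, generically strictly decreasing $w(B)$; in the degenerate case of a vertex-linking sphere boundary of weight zero, $w(B)$ is unchanged but $\#\pi_0(B)$ strictly decreases by one. In every case the lex complexity drops, so termination follows from the well-ordering of $\ZZ_{\geq 0}^2$.

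The substantive content is the termination statement, and the main obstacle is ensuring strict decrease of the complexity in every case. This is handled by introducing the secondary invariant $\#\pi_0(B)$ to cover the trivial components whose boundary contributes zero normal weight; operation (a) itself is handled entirely by the standard Haken argument.
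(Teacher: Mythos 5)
Your proof contains a genuine gap stemming from a misreading of the definition of compression. The paper explicitly allows each of the three operations to be applied \emph{either to $B$ or to $B^\complement$}: ``Such an operation, applied to either $B$ or $B^\complement$, will be called a compression.'' A compression applied to $B^\complement$ \emph{enlarges} $B$ (adding a $1$-handle along a disk, or adding a ball, or gluing in an $F\times I$ region). Consequently iterated compressions do \emph{not} form a descending chain of subsets, and your claim that ``monotonicity is tautological'' is false. This breaks your argument for the second assertion entirely: once compressions of $B^\complement$ are allowed, $\bar B$ is no longer literally contained in $B'$, and establishing $[\bar B]\leq[B']$ actually requires an argument (roughly: use incompressibility of $\partial B'$ and innermost--disk isotopies to push each compressing disk of $B^\complement$, or each removable $(B^3,S^2)$ or $(F\times I)$ component of $B^\complement$, into $B'\setminus B^\circ$, so that the enlargement stays inside $B'$ up to isotopy). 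Your proposal says nothing toward this.

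The same misreading leaks into your termination argument. Your primary invariant, the normal-surface weight $w(B)$ of $\partial B$, is symmetric in $B$ and $B^\complement$, so it does behave correctly for compressions on either side; but your secondary invariant $\#\pi_0(B)$ does not. For example, removing a $(B^3,S^2)$ component of $B^\complement$ does not decrease $\#\pi_0(B)$ at all (the ball is absorbed into an existing component of $B$), and removing an $(F\times I,F\times\partial I)$ component of $B^\complement$ may leave $\#\pi_0(B)$ unchanged as well. The ``degenerate'' zero-weight case you invoke is in any event vacuous for a nonempty normal surface, but more to the point the $\#\pi_0$ bookkeeping is not coherent once $B^\complement$-compressions are in play. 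The paper's termination argument is both shorter and automatically symmetric: it works purely with the compact surface $\partial B$, observing that compressions act on $\partial B$ by essential $2$-surgeries (each raising Euler characteristic by $2$ without creating positive-Euler-characteristic components, hence bounded in number) and by component deletions (bounded once the surgeries stop, since the surface stays compact). No triangulation, normalization, or Haken weight reduction is needed, and the argument applies indifferently to compressions of $B$ or $B^\complement$ because it only sees $\partial B$.
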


\begin{proof}
We just look at what the operations do to the compact properly embedded surface-with-boundary $\partial B$.
There are thus two types of operations: removing a component (or two) of $\partial B$ and performing a $2$-surgery along a simple closed curve inside $\partial B$.
Note that since $M$ is irreducible, a non-trivial compression disk has essential boundary, so the $2$-surgeries are all along essential curves.
It suffices to show that no compact surface-with-boundary admits an infinite sequence of such operations (component removals and $2$-surgeries).
In such a sequence of operations, if there are finitely many $2$-surgeries, there must also be finitely many component removals, since after all the $2$-surgeries are done, there are at most finitely many components as our surface always remains compact.
It thus suffices to show that there cannot be infinitely many $2$-surgeries.
This is clear, since each $2$-surgery increases the Euler characteristic by $2$, and non-trivial $2$-surgeries cannot create components of positive Euler characteristic, so the Euler characteristic cannot become arbitrarily large.
\end{proof}

When studying incompressible $B\subseteq M$, the fact that disjoint isotopic incompressible surfaces are parallel (see Waldhausen \cite[Corollary 5.5]{waldhausen} or Johannson \cite[Proposition 19.1]{johannson}) will be of essential use.
For example, this implies that:

\begin{proposition}\label{propcylinder}
Suppose $B,B'\subseteq M$ represent the same class in $\sL(M;A)$ and that $B\subseteq B'$, meeting only along $\partial A=\partial\partial B=\partial\partial B'$, transversely.
Then the region $B'\setminus B^\circ$ is a product $\partial B\times I$ pinched along $\partial\partial B\times I$ (which becomes $\partial A$).
\qed
\end{proposition}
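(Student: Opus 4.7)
The strategy is to push $F$ slightly into $B^\circ$ so that Waldhausen's parallel-surface theorem applies, and then to extract the pinched product structure of $N=B'\setminus B^\circ$ by placing $F$ in a standard ``dome'' position inside the resulting product region.

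By passing to components it suffices to treat a single component $N$ of $B'\setminus B^\circ$, with frontier consisting of one component $F\subseteq\partial B$ and its paired isotopic component $F'\subseteq\partial B'$, meeting along $\gamma:=\partial F=\partial F'\subseteq\partial A$. Fix a smooth collar $\iota\colon F\times[0,\varepsilon]\hookrightarrow B$ with $\iota(\,\cdot\,,0)=\id_F$, and set $F_\varepsilon:=\iota(F\times\{\varepsilon\})\subseteq B^\circ$. Then $F_\varepsilon$ is a properly embedded two-sided incompressible surface isotopic to $F$, and hence to $F'$. Since the hypothesis $\partial B\cap\partial B'\subseteq\partial A\subseteq\partial M$ forces $\partial B'\cap B^\circ=\varnothing$, we have $F_\varepsilon\cap F'=\varnothing$. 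By Waldhausen's parallel-surface theorem (\cite{waldhausen} Corollary~5.5), $F_\varepsilon$ and $F'$ cobound a product region $R\subseteq M$ with a diffeomorphism $R\cong F\times I$ sending $F_\varepsilon\mapsto F\times\{0\}$, $F'\mapsto F\times\{1\}$, and the side $\partial F\times I$ onto a strip in $A$ joining $\partial F_\varepsilon$ to $\gamma$.

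Inside $R\cong F\times I$, the original surface $F$ appears as a properly embedded copy of $F$ isotopic rel $\gamma$ to the top cap $F'=F\times\{1\}$, with $\partial F=\gamma$ sitting at the top corner $\partial F\times\{1\}$ of the side. After an isotopy of $R$ fixing $F_\varepsilon$ and the side pointwise, we may realize $F$ as the graph of a function $g\colon F\to[0,1]$ with $g\vert_\gamma\equiv 1$ and $g<1$ on $F^\circ$. The region $N$ then equals $\{(x,z)\in F\times I:g(x)\le z\le 1\}$, and the straightening map $(x,z)\mapsto\bigl(x,\tfrac{z-g(x)}{1-g(x)}\bigr)$, which is well-defined on $F^\circ\times I$ and extends by the collapse $\gamma\times I\to\gamma$, identifies $N$ with the pinched product $\partial B\times I$ in which $\partial\partial B\times I$ is collapsed to $\partial A$, as claimed.

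The main technical point is the dome normal-form step: arranging the embedded copy of $F$ inside $R\cong F\times I$ to be a graph over the top cap. This combines the isotopy extension theorem with the fact that $F$ and $F'$ are already isotopic rel $\gamma$ inside $M$ (hence inside $R$, on the side of $F'$ toward $F_\varepsilon$), but requires some care at the corner $\gamma$ to ensure the isotopy is smooth there and to verify that the resulting straightening extends continuously across the pinch locus to give the claimed homeomorphism with the pinched product.
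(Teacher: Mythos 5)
Your strategy — push $F$ into $B^\circ$ so that Waldhausen's parallel-surface theorem applies, then straighten — is essentially the argument the paper has in mind; the paper states this proposition with no written proof, immediately after citing exactly that result of Waldhausen as the needed input. The component reduction is fine once you observe that no other component of $\partial B$ or $\partial B'$ can lie in the product region $R$ (an incompressible surface in $R\cong F\times I$ with boundary in the side is isotopic to $F$, so a second component there would force two isotopic components of $\partial B$ or $\partial B'$, contradicting incompressibility of $B$, $B'$).

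The step that is not yet a proof is the ``dome normal-form'' claim. You assert that $F$ is isotopic to $F'$ rel $\gamma$ and use isotopy extension to put $F$ in graph position, but nothing in the hypotheses gives an isotopy rel $\gamma$ — the lattice $\sL(M;A)$ only records isotopy classes with $B\cap\partial M=A$, and the ambient isotopy from $B$ to $B'$ need not fix $\gamma$ pointwise; indeed ``$F\sim F'$ rel $\gamma$'' is morally a consequence of the proposition you are proving, so invoking it risks circularity. The correct tool is the paper's own Lemma~\ref{cylinderincompressible}: after pushing $\partial F\subseteq\partial F'$ off the corner to a level $\partial F\times\{1-\delta\}$, the surface $F$ is an incompressible surface in $F\times I$ with boundary $\partial F\times\{*\}$, so it is isotopic to $F\times\{*\}$ by that lemma, which gives the graph position directly (and lets you avoid the ``isotopic rel $\gamma$'' claim altogether). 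With that replacement, the final straightening map and its extension across the pinch locus is routine, and the proof is complete.
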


\begin{corollary}
$\sL(M;A)$ is a poset.
\end{corollary}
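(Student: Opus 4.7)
The plan is to verify that $\leq$ on $\sL(M;A)$ is reflexive (trivial), transitive, and antisymmetric.

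For transitivity, given $\B_1 \leq \B_2 \leq \B_3$, I would choose witnessing inclusions $B_1 \subseteq B_2^a$ and $B_2^b \subseteq B_3$ with $[B_2^a] = [B_2^b] = \B_2$. An isotopy from $B_2^b$ to $B_2^a$ through properly embedded codimension-zero submanifolds with $\partial M$-intersection constantly equal to $A$ extends, by isotopy extension for manifolds-with-boundary, to an ambient isotopy of $M$ fixing $\partial M$ pointwise. Pushing $B_3$ along this ambient isotopy yields a representative $B_3' \supseteq B_2^a$ of $\B_3$, and the chain $B_1 \subseteq B_2^a \subseteq B_3'$ then witnesses $\B_1 \leq \B_3$.

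For antisymmetry, given $\B \leq \B'$ and $\B' \leq \B$, I would pick $B \subseteq B'$ with $[B]=\B, [B']=\B'$ and $C' \subseteq C$ with $[C']=\B', [C]=\B$, and then use isotopy extension to replace $C$ by $B$, carrying $C'$ to some $C'' \subseteq B$ with $[C''] = \B'$. A small generic perturbation arranges that $C''$ and $B'$ meet only along $\partial A$, transversely. This produces the sandwich $C'' \subseteq B \subseteq B'$ with $[C''] = [B'] = \B'$. Proposition~\ref{propcylinder} applied to $C'' \subseteq B'$ identifies $B' \setminus (C'')^\circ$ with the pinched product obtained from $\partial C'' \times I$ by collapsing $\partial\partial C'' \times I$ onto $\partial A$. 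The incompressible surface $\partial B \cap M^\circ$ sits inside this region, sandwiched between $\partial C''$ and $\partial B'$ and approaching $\partial M$ transversely along $\partial A$; its lift to the unpinched product $\partial C'' \times I$ is a properly embedded incompressible surface with boundary on $\partial\partial C'' \times I$. Lemma~\ref{cylinderincompressible} then forces this lift to be isotopic to a horizontal slice $\partial C'' \times \{t\}$, which upon projecting back shows $B$ is isotopic rel $A$ to $C''$, so $\B = [B] = [C''] = \B'$.

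The main technical obstacle is legitimizing the lift of $\partial B$ from the pinched region $B' \setminus (C'')^\circ$ to the honest product $\partial C'' \times I$, since the pinched region is not itself a manifold along $\partial A$. Transversality of $\partial B$ with $\partial M$ along $\partial A$ handles this: the lift meets each arc component of $\partial\partial C'' \times I$ in a curve projecting homeomorphically onto the corresponding arc of $\partial\partial C''$, so the lift is properly embedded in the manifold-with-corners $\partial C'' \times I$ and Lemma~\ref{cylinderincompressible} applies directly.
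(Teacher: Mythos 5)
Your proof is correct and follows essentially the same route as the paper's: produce a three-term nested chain by isotopy extension, then invoke Proposition~\ref{propcylinder} to get the pinched product and Lemma~\ref{cylinderincompressible} to conclude the middle surface is parallel to the outer ones. You supply substantially more detail than the paper's terse proof (the isotopy-extension step for transitivity and for building the sandwich, and the unpinching/lifting argument), which is welcome; one small slip is that $\partial\partial C'' = \partial A$ consists of circle components, not arcs, so the lift's boundary meets each boundary annulus $S^1\times I$ of $\partial C''\times I$ in a circle graphical over $S^1$.
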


\begin{proof}
If $\B\leq\B'\leq\B$, then we can find $B\subseteq B'\subseteq B''\subseteq M$ with $[B]=[B'']=\B$ and $[B']=\B'$.
Now apply Proposition \ref{propcylinder} and Lemma \ref{cylinderincompressible}.
\end{proof}

We now wish to show that $\sL(M;A)$ is a lattice.
The lattice property arises from the following fundamental result due to Freedman--Hass--Scott \cite[\S 7]{freedmanhassscott}.

\begin{theorem}\label{fhstheorem}
Let $M$ be a compact $P^2$-irreducible three-manifold-with-boundary, and let $S\subseteq\partial M$ be a multicurve all of whose components are essential.
There exist representatives $(F,\partial F)\hookrightarrow(M,S)$ of every isotopy class of properly embedded two-sided incompressible surface with boundary contained in $S$ which simultaneously realize all disjointness relations (with the caveat that a closed one-sided surface is allowed to be the representative of the isotopy class of the boundary of its tubular neighborhood).
\qed
\end{theorem}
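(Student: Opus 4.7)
The strategy is the classical one from the theory of least-area incompressible surfaces: equip $M$ with a Riemannian metric, replace each isotopy class by an area-minimizing representative, and argue by a cut-and-paste / exchange argument that disjointness relations are realized automatically. To set up the metric, I would choose a smooth Riemannian metric on $M$ for which $\partial M$ is strictly mean convex and each component of $S$ is a geodesic arc or circle. A convenient construction is to double $M$ along $\partial M$ and perturb slightly; when $M$ admits one, a hyperbolic metric with totally geodesic boundary and cusped ends along $S$ is even cleaner. The role of the metric is that area becomes a well-behaved functional on isotopy classes of properly embedded surfaces, and strict mean convexity prevents area-minimizers from being pushed out to $\partial M$ except where they terminate along $S$.

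For each isotopy class of two-sided incompressible properly embedded $(F,\partial F)\hookrightarrow(M,S)$, the plan is to minimize area within the class. By the Meeks--Simon--Yau existence and regularity theory, the infimum is attained by a smooth embedded minimal surface; $P^2$-irreducibility of $M$ together with incompressibility of $F$ prevents the minimizing sequence from degenerating into spherical or compressible pieces, so the limit lies in the same isotopy class. The one-sided caveat appears precisely when the minimization wants to collapse the two-sided boundary $\partial N(\widetilde F)$ of a regular neighborhood of a closed one-sided surface $\widetilde F$ onto $\widetilde F$ with multiplicity two; in that case one lets $\widetilde F$ itself serve as the representative of the class of $\partial N(\widetilde F)$, which is exactly the caveat in the statement.

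The main step is the simultaneous disjointness. Given two isotopy classes with disjoint representatives, put the chosen least-area representatives $F_1,F_2$ in general position and argue by contradiction that any intersection is eliminable. If $F_1\cap F_2\ne\varnothing$, some component $\gamma$ of the intersection is \emph{unnecessary} in the sense that, combining the hypothesis of disjoint representatives with an innermost-disk / bigon argument, either $\gamma$ bounds embedded disks on both $F_1$ and $F_2$, or a pair of intersection arcs bounds a bigon. In either case one performs a Meeks--Simon--Yau roundoff: cut along $\gamma$, swap the pieces of $F_1$ and $F_2$, and smooth the resulting corners. Because the corners violate the minimality condition (the strict maximum principle rules out interior tangencies for distinct minimal surfaces, and strict mean convexity rules it out at the boundary), smoothing strictly decreases total area, so one of the two swapped surfaces has smaller area in its own isotopy class, contradicting least-area. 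Applying this pairwise to the independently chosen minimizers yields the whole family of representatives simultaneously. The main technical obstacle is controlling the roundoff near $\partial M$, where intersection arcs terminate on $S$; this is exactly why we arranged $\partial M$ to be strictly mean convex and $S$ to be geodesic, which guarantees that the boundary corners produced by the swap are also strictly area-decreasing when smoothed.
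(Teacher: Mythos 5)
Your proposal lives in the same circle of ideas as the paper's sketch (minimal surfaces with a convex-boundary metric, exchange/roundoff), but it differs in a way that opens a genuine gap. The paper quotes Freedman--Hass--Scott: first produce least-area \emph{maps} in each $\pi_1$-injective \emph{homotopy} class (Douglas, Sacks--Uhlenbeck, Schoen--Yau; Osserman--Gulliver for immersed), then invoke FHS to show these are embeddings (or double covers of one-sided surfaces) and that least-area representatives of disjointable homotopy classes are disjoint, and finally invoke Waldhausen/Johannson to translate homotopy classes back into isotopy classes. You instead minimize area directly among embedded surfaces in a fixed isotopy class (Meeks--Simon--Yau style) and then try to eliminate intersections by a cut-and-swap.

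The gap is in the disjointness step. First, the claim that whenever $F_1\cap F_2\neq\varnothing$ some intersection component is ``unnecessary'' (bounds disks on both sides or cobounds a bigon) is false in general: two least-area incompressible surfaces which can be isotoped apart may intersect entirely in curves essential in both surfaces, and the innermost-disk/bigon combinatorics gives you nothing. Second, and more fundamentally, even granting an exchange move, the roundoff produces two new embedded surfaces $F_1',F_2'$ of strictly smaller total area, but nothing forces $F_1'$ (say) to lie in the isotopy class of $F_1$ or of $F_2$ --- so the sentence ``one of the two swapped surfaces has smaller area \emph{in its own isotopy class}'' is unjustified, and the contradiction evaporates. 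This is precisely the difficulty that the FHS argument is designed to overcome: they pass to the cover $\widetilde M_1\to M$ corresponding to $\pi_1(F_1)$, where $F_1$ lifts to a separating embedding, lift $F_2$ there, and carry out the exchange in a controlled way where one \emph{can} identify the homotopy class of the swapped surface and derive a contradiction with least area in that homotopy class. Waldhausen/Johannson then upgrades homotopy to isotopy. Without that covering-space bookkeeping (or the equivalent normal-surface argument of Jaco--Rubinstein in the PL setting), the swap argument does not close.
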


The proof of Theorem \ref{fhstheorem} proceeds by choosing a Riemannian metric on $M$ which is `convex' near $\partial M$ in a suitable sense (the version of this assumption which is easiest to use from a technical standpoint is for the metric to be a product $\partial M\times[0,\varepsilon)$ near the boundary, however $\partial M$ being weakly mean convex would also be sufficient).
The methods of Douglas \cite{douglas}, Sacks--Uhlenbeck \cite{sacksuhlenbeck,sacksuhlenbeck2}, and Schoen--Yau \cite{schoenyau} show that area minimizing maps exist in $\pi_1$-injective homotopy classes, and the methods of Osserman \cite{osserman} and Gulliver \cite{gulliver} show that these maps are immersions.
Finally, the results of Freedman--Hass--Scott \cite{freedmanhassscott} show that these area minimizing immersions are in fact embeddings (or double covers of embedded one-sided surfaces), and Waldhausen \cite[Corollary 5.5]{waldhausen} or Johannson \cite[Proposition 19.1]{johannson} guarantee that homotopy classes and isotopy classes of two-sided incompressible surfaces in $P^2$-irreducible three-manifolds-with-boundary coincide; see also Hass--Scott \cite{hassscott}.
Analogous piecewise-linear methods are contained in Jaco--Rubinstein \cite{jacorubinstein}.

\begin{corollary}\label{minimalrealizeall}
Suppose $M=\overline M\setminus Y$ for a compact $P^2$-irreducible three-manifold-with-boundary $\overline M$ and a codimension zero submanifold-with-boundary $Y\subseteq\partial\overline M$.
There exist representatives $B\subseteq M$ of every element of $\sL(M;A)$ which simultaneously realize all order relations (i.e.\ $[B]\leq[B']$ iff $B\subseteq B'$).
\end{corollary}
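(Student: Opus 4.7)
The plan is to reduce to Theorem \ref{fhstheorem} applied inside the compact $P^2$-irreducible manifold $\overline{M}$. For each $\B \in \sL(M;A)$ with a chosen representative $B$, the frontier $\partial B$ is a compact properly embedded two-sided incompressible surface in $M$ with boundary $\partial A$. Since any compression disk for such a surface lies in $M^\circ = \overline{M}^\circ$, incompressibility is the same whether computed in $M$ or in $\overline{M}$; moreover $\partial B$ is disjoint from $Y$, so we may regard it as a properly embedded incompressible surface in $\overline{M}$ with boundary on the multicurve $S := \partial A \subseteq \partial \overline{M}$. (The components of $S$ remain essential in $\partial \overline{M}$ provided $Y$ contains no disk component bounded by a curve of $\partial A$, a degenerate case one can either rule out or handle separately by capping off.)

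Applying Theorem \ref{fhstheorem} to $(\overline{M}, S)$ produces simultaneous surface representatives $\{F_\B\}_{\B \in \sL(M;A)}$ realizing all disjointness relations. Whenever $\B \leq \B'$, there exist abstract nested representatives $B \subseteq B'$, and pushing $\partial B$ slightly into $(B')^\circ$ makes $\partial B$ and $\partial B'$ disjoint; hence the chosen FHS representatives satisfy $F_\B \cap F_{\B'} = \varnothing$.

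The final step is to upgrade $\{F_\B\}$ to a family of codim-$0$ representatives $\{B_\B\}$ realizing all order relations. For each $\B$, pick any abstract representative $B$ and let $B_\B$ be the union of $A$ together with those components of $M \setminus F_\B$ which meet $B^\circ$. By Proposition \ref{propcylinder}, $B_\B$ is independent of the choice of $B$ and is isotopic to $B$, so $[B_\B] = \B$. If $\B \leq \B'$ and we pick nested abstract representatives $B \subseteq B'$, each component of $M \setminus F_\B$ meeting $B^\circ$ is contained in some component of $M \setminus F_{\B'}$ meeting $(B')^\circ$, giving $B_\B \subseteq B_{\B'}$. The reverse implication $B_\B \subseteq B_{\B'} \Rightarrow \B \leq \B'$ is immediate from the definition of the partial order.

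The main technical obstacle is the consistency of side choices across the whole (possibly infinite) family $\{F_\B\}$: we need the FHS representatives, which are only required to be simultaneously disjoint, to actually produce the nesting $B_\B \subseteq B_{\B'}$ whenever $\B \leq \B'$, not merely among some isotopy representatives. This is where Proposition \ref{propcylinder} is essential: once $F_\B$ and $F_{\B'}$ are simultaneously realized as disjoint incompressible surfaces in $\overline{M}$, the combinatorics of how the components of $M \setminus F_\B$ sit inside the components of $M \setminus F_{\B'}$ are rigidified, and the $B^\circ$-containing components match up in the forced way dictated by any abstract nested pair. This final bookkeeping step is where I expect most of the detail to lie.
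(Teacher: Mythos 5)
Your high-level strategy matches the paper's: apply Theorem \ref{fhstheorem} in $\overline M$ to obtain simultaneous surface representatives realizing disjointness, then upgrade disjointness of the bounding surfaces to nesting of the codimension-zero pieces. The gap is in that second step, and it is exactly the content of the paper's Lemma \ref{disjointrealizeorder}, which you do not invoke and do not reprove. Your proposed construction ``let $B_\B$ be the union of $A$ together with those components of $M\setminus F_\B$ which meet $B^\circ$'' is not well-defined as written: for an \emph{arbitrary} abstract representative $B$ of $\B$, which need not be isotoped into alignment with $F_\B$, every component of $M\setminus F_\B$ can meet $B^\circ$, making $B_\B=M$. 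Proposition \ref{propcylinder} does not rescue this — it says that two nested representatives of the \emph{same} class bound a pinched product region, which does not address what happens when $B$ is positioned independently of $F_\B$. Likewise the assertion ``each component of $M\setminus F_\B$ meeting $B^\circ$ is contained in some component of $M\setminus F_{\B'}$'' is false in general: components of $M\setminus F_\B$ may well cross $F_{\B'}$, since disjointness of $F_\B$ and $F_{\B'}$ says nothing about $F_{\B'}$ being disjoint from the interior of a complementary region of $F_\B$.

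What actually closes the gap is an explicit isotopy argument: given $B$ with $\partial B=F_\B$, $B'$ with $\partial B'=F_{\B'}$, and $[B]\leq[B']$, one finds $B'_1$ isotopic to $B'$ with $B\subseteq B'_1$ and then isotopes $B'_1$ onto $B'$ one boundary component at a time, checking (using Lemma \ref{cylinderincompressible} for shared components, and incompressibility of $\partial B^\complement$ for the rest) that the containment $B\subseteq B'_1$ is preserved throughout. That is Lemma \ref{disjointrealizeorder}; without it (or an equivalent), your ``rigidified combinatorics'' claim at the end is an assertion rather than a proof, and you say as much. Separately, your flag about $\partial A$ possibly being inessential in $\partial\overline M$ is a reasonable point of care the paper glosses over, but it is peripheral to the main missing piece.
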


\begin{proof}
Choose representatives $B\subseteq M$ whose boundary components are the representatives of Theorem \ref{fhstheorem} inside $\overline M$.
These realize all order relations by Lemma \ref{disjointrealizeorder}.
\end{proof}

\begin{lemma}\label{disjointrealizeorder}
Let $B,B'\subseteq M$ be incompressible, and suppose that for every pair of components $F\subseteq\partial B$ and $F'\subseteq\partial B'$, either $F$ and $F'$ are disjoint and not isotopic or $F=F'$.
If $[B]\leq[B']$ then $B\subseteq B'$.
\end{lemma}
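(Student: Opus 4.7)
The strategy is to turn the abstract isotopy-class inclusion $[B]\leq[B']$ into a literal inclusion $B\subseteq B'$ by first normalizing the ambient representative of $B'$ to $B'$ itself, and then isotoping the resulting representative of $B$ to $B$ through a path that stays inside $B'$. Concretely, fix representatives $\hat B\subseteq\hat B'$ with $\hat B\sim B$ and $\hat B'\sim B'$, and apply an ambient isotopy of $M$ carrying $\hat B'$ to $B'$; the image $B_1$ of $\hat B$ under this isotopy then satisfies $B_1\subseteq B'$ and $B_1\sim B$. The problem reduces to deforming $B_1$ to $B$ through an ambient isotopy supported in $B'$.

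Components of $\partial B$ and $\partial B_1$ are pairwise non-isotopic in $M$ by incompressibility, so the isotopy $B_1\sim B$ matches each component $F\subseteq\partial B$ with a unique component $F_1\subseteq\partial B_1$ isotopic to $F$. I would treat each $F$ according to whether or not it coincides with a component of $\partial B'$. When $F=F'$ is also a component of $\partial B'$, the surface $F_1\subseteq B'$ is incompressible and isotopic to the boundary component $F'$ of $B'$, so by the parallel-surface theorem applied inside $B'$ they cobound a product region in $B'$ through which $F_1$ can be isotoped onto $F$. When $F$ is disjoint from $\partial B'$, I would first argue that $F$ lies in the interior of $B'$: otherwise $F$ lies in the complement of $B'$ while $F_1\subseteq B'$, so the product region $R\cong F\times I$ cobounded by $F$ and $F_1$ in $M$ must cross $\partial B'$; but any component of $\partial B'$ lying in the interior of $R$ would be incompressible in $R$ and thus, by Lemma \ref{cylinderincompressible}, isotopic to $F$, contradicting the hypothesis that no component of $\partial B'$ is isotopic to $F$. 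The same product-region reasoning applied to $F$ and $F_1$ themselves (now both in the interior of $B'$) shows that the product region between them contains no component of $\partial B'$ in its interior, hence lies inside $B'$, and we isotope $F_1$ to $F$ through it.

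Performing these per-component isotopies simultaneously with disjoint supports produces an ambient isotopy of $M$ supported in $B'$ carrying $B_1$ to some $B_2\subseteq B'$ with $\partial B_2=\partial B$. Since $B_2\sim B$ and the components of $\partial B=\partial B_2$ are pairwise non-isotopic, any ambient isotopy realizing $B_2\sim B$ must fix each boundary component setwise; being connected to the identity, such an isotopy preserves the two sides of each two-sided boundary component, so the side on which $B$ lies must agree with the side on which $B_2$ lies. Hence $B_2=B$, and consequently $B\subseteq B'$.

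The main obstacle is the case where $F\subseteq\partial B$ is disjoint from $\partial B'$: both the containment of $F$ in the interior of $B'$ and the containment of the product region inside $B'$ rest essentially on Lemma \ref{cylinderincompressible} together with the pairwise non-isotopy hypothesis, without which the product region could intersect $\partial B'$ nontrivially and force $F$ or parts of $B_1$ outside $B'$.
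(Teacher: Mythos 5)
Your proof is essentially correct and uses the same ingredients as the paper's (parallel-surface product regions, Lemma \ref{cylinderincompressible}, incompressibility of $\partial B^\complement$), but it runs the argument in the dual direction. The paper keeps $B$ fixed: it picks $B'_1\sim B'$ with $B\subseteq B'_1$ and then isotopes each boundary component of $B'_1$ onto the corresponding component of $B'$, maintaining $B\subseteq B'_1$ throughout, so that at the end $B'_1=B'$ and one reads off $B\subseteq B'$. You instead keep $B'$ fixed, produce $B_1\subseteq B'$ with $B_1\sim B$, and isotope $B_1$ toward $B$ inside $B'$; both the per-component moves and the analysis of where the product regions can lie mirror the paper's reasoning. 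The two routes are genuinely equivalent in content, and your case analysis (common components versus components of $\partial B$ disjoint from $\partial B'$) is sound.

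The one place your version incurs an extra cost is the terminal identification $B_2=B$. The paper never needs such a step because $B$ is held fixed; you, having moved the $B$-side, must argue that a codimension-zero submanifold $B_2\subseteq B'$ with $\partial B_2=\partial B$ and $B_2\sim B$ actually equals $B$. Your justification --- ``being connected to the identity, such an isotopy preserves the two sides of each two-sided boundary component'' --- is not a general fact about ambient isotopies carrying a surface to itself, and it needs an argument. The relevant mechanism is that, by incompressibility, $\pi_1(F)\hookrightarrow\pi_1(M)$, so the time-one restriction $h_1|_F$ has $\pi_1$-action given by conjugation by some $\gamma\in N_{\pi_1(M)}(\pi_1F)$; one then has to rule out the exceptional cases (fibers of fibrations over $S^1$, boundaries of twisted $I$-bundles over one-sided surfaces) where this normalizer is strictly larger than $\pi_1(F)$, the second of which is already excluded by the definition of an incompressible $B$ in \S\ref{sectionlatticethreemanifold}. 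So your conclusion is correct, but it rests on a nontrivial subsidiary claim that should be spelled out; the paper's arrangement sidesteps this entirely. A minor additional point: performing the per-component isotopies ``simultaneously with disjoint supports'' also requires a word on why the product regions for different components of $\partial B$ can be made disjoint, which is why the paper isotopes the boundary components one at a time.
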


\begin{proof}
Since $[B]\leq[B']$, there exists $B'_1$ isotopic to $B'$ with $B\subseteq B'_1$.
It suffices to show that we can isotope $B'_1$ to $B'$ while maintaining the property that $B\subseteq B'_1$.
To do this, we isotope the boundary components of $B'_1$ one by one onto the corresponding boundary components of $B'$.

Let us begin with the common components of $\partial B$ and $\partial B'$.
Let $\partial B\supseteq F=F'\subseteq\partial B'$ be such a component, and let $F'_1\subseteq\partial B'_1$ be the corresponding component.
Now $F'$ and $F'_1$ are parallel, and the region in between contains no other boundary components by Lemma \ref{cylinderincompressible}, so there is an evident isotopy of $B'_1$ moving $F'_1$ to $F'$, which preserves the containment $B\subseteq B'_1$.

Let us now consider corresponding components $F'\subseteq\partial B'$ and $F'_1\subseteq\partial B'_1$ which are not isotopic to a component of $\partial B$.
By assumption $F'_1\cap B=\varnothing$ and $F'\cap\partial B=\varnothing$.
There are now two possibilities: either $F'\subseteq B$ or $F'\cap B=\varnothing$.
The first possibility is in fact impossible: it would imply that $F'_1$ and $F'$ are disjoint, hence parallel, and thus isotopic to the component of $\partial B$ in between them by Lemma \ref{cylinderincompressible}.
We are thus in the second situation of $F'\cap B=\varnothing$.
Now $F'_1,F'\subseteq B^\complement$ are isotopic inside $M$, and we want to show that they are isotopic in $B^\complement$.
This holds because $\partial B^\complement$ is incompressible.
\end{proof}

\begin{proposition}
$\sL(M;A)$ is a lattice.
\end{proposition}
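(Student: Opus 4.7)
The plan is to mimic the surface case of Section \ref{sectionlatticesurface}: produce representatives of $\B_1, \ldots, \B_n$ that simultaneously realize all of their disjointness and order relations, take their union, and check that the resulting codimension zero submanifold is automatically incompressible.

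Using Corollary \ref{minimalrealizeall} (after passing to a sufficiently large compact submanifold of $M$ of the form $\overline M \setminus Y$ if $M$ is non-compact, which is harmless since each $\partial B_i$ is compact), I would choose representatives $B_1, \ldots, B_n \subseteq M$ such that distinct components of $\bigcup_i \partial B_i$ are disjoint and pairwise non-isotopic in $M$, and $B_i \subseteq B_j$ whenever $\B_i \leq \B_j$. Setting $B := B_1 \cup \cdots \cup B_n$ then yields a codimension zero submanifold-with-boundary of $M$ satisfying $B \cap \partial M = A$, with $\partial B$ equal to a union of components of $\bigcup_i \partial B_i$.

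Verifying that $B$ is incompressible reduces cleanly to the incompressibility of the individual $B_i$: each component of $\partial B$ is a component of some incompressible $\partial B_i$ and hence incompressible in $M$; the components of $\partial B$ are pairwise non-isotopic by the simultaneous-realization property (two components from the same $\partial B_i$ by incompressibility of $B_i$, two from distinct $\partial B_i, \partial B_j$ by the simultaneous realization collapsing isotopic surfaces to a single one); and no component of $\partial B$ bounds an embedded $F \tildetimes I \subseteq M$ for closed $F$, because this would contradict the corresponding condition for the parent $\partial B_i$. With $B$ incompressible, $[B]$ is evidently an upper bound of $\{\B_1, \ldots, \B_n\}$, and minimality follows by the same device: given any upper bound $\B'$, applying Corollary \ref{minimalrealizeall} to the extended family $\{\B_1, \ldots, \B_n, \B'\}$ yields a representative $B'$ containing all of the $B_i$ (and therefore containing $B$) up to isotopy.

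The main obstacle I anticipate is not the argument itself but the bookkeeping around the simultaneous realization: one must verify that in $\bigcup_i \partial B_i$ distinct isotopy classes really do give disjoint embedded surfaces while coincident isotopy classes across different $B_i$ give identified surfaces, so that $B = \bigcup_i B_i$ is a well-defined codimension zero submanifold-with-boundary with $\partial B$ a union of components of $\bigcup_i \partial B_i$. A secondary technical point is handling non-compact $M$ by restricting the argument to a compact submanifold on which Corollary \ref{minimalrealizeall} directly applies.
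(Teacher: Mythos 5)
Your approach is essentially the same as the paper's up to the final step: both use the canonical representatives from Corollary~\ref{minimalrealizeall} (built from the Freedman--Hass--Scott simultaneous realization) and take the union $B=\bigcup_i B_i$, which any upper bound's canonical representative then contains. Where you diverge is that you argue $B$ is \emph{already} incompressible, by checking the three conditions of the corollary following Lemma~\ref{cylinderincompressible} directly, whereas the paper instead takes small inward perturbations $B_i^-\subseteq B_i$ so the boundaries become mutually transverse (hence disjoint), forms the union, and then passes to a maximal compression via Lemma~\ref{maximalcompression}. Your shortcut is valid: since the $\partial B_i$ are simultaneous FHS representatives, any two components are either equal or disjoint-and-non-isotopic, so $\partial B$ is a union of incompressible surfaces with no two isotopic and none bounding a twisted $I$-bundle, whence $B$ is incompressible. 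What the perturbation-plus-compression route buys the paper is robustness: one need not analyze which components of $\bigcup_i\partial B_i$ survive into $\partial B$, nor verify by hand that the union is a well-formed codimension zero submanifold-with-boundary (a point you flag as needing bookkeeping, and which is exactly what the perturbation sidesteps). Your route buys directness and avoids invoking Lemma~\ref{maximalcompression} at this stage.

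Two points deserve more care. First, you lean on features of the canonical representatives (FHS boundaries, one per isotopy class) that go beyond the literal statement of Corollary~\ref{minimalrealizeall}, which only promises order realization; you are implicitly appealing to the proof and to Theorem~\ref{fhstheorem}, which should be said explicitly. Second, the non-compact case is dispatched too quickly. "Passing to a sufficiently large compact submanifold" does not by itself establish a compatible identification between the finite sublattice of $\sL(M;A)$ you care about and a lattice of the form $\sL(M';A')$ with $M'=\overline{M}\setminus Y$: isotopy classes, incompressibility, and the order relation could in principle differ between $M'$ and $M$. The paper handles this by first producing upper and lower bounds in $\sL(M;A)$ via Lemma~\ref{maximalcompression}, then identifying the resulting order interval with a lattice $\sL((B^+_\varepsilon)^\circ\setminus B^-_\varepsilon;\cdot)$ satisfying the hypotheses of Corollary~\ref{minimalrealizeall}; that intermediate step is the actual content of the non-compact reduction and cannot be waved away.
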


\begin{proof}
We first consider the case that $M$ is as in Corollary \ref{minimalrealizeall}.
Let $\B_i\in\sL(M;A)$ be a finite collection of elements, and choose their canonical representatives $B_i\subseteq M$ as in Corollary \ref{minimalrealizeall}.
Fix small inward perturbations $B_i^-\subseteq B_i$ so that their boundaries $\partial B_i^-$ are mutually transverse, and define $B:=\bigcup_iB_i^-\subseteq M$ (or rather as a smoothing of the boundary of this union).
Now suppose $\B'\in\sL(M;A)$ is larger than every $\B_i'$.
The canonical representative $B'\subseteq M$ therefore satisfies $B'\supseteq B_i$ for every $B_i$, hence $B'\supseteq B$.
Let $\B$ denote the class of any maximal compression of $B$ as in Lemma \ref{maximalcompression}.
We thus have $\B_i\leq\B$ and $\B\leq\B'$ for any upper bound $\B'$ of all $\B_i$.
In other words, $\B$ is a least upper bound for the $\B_i$.

We now reduce the case of general $M$ to that treated above, namely when $M$ is as in Corollary \ref{minimalrealizeall}.
Given any incompressible $B^-\subseteq B^+\subseteq M$ and small inward/outward pushoffs $B^-_\varepsilon\subseteq B^-$ and $B^+_\varepsilon\supseteq B^+$, there is an inclusion
\begin{equation}
\sL((B^+_\varepsilon)^\circ\setminus B^-_\varepsilon;A\cap\partial((B^+_\varepsilon)^\circ\setminus B^-_\varepsilon))\to\sL(M;A)
\end{equation}
given by ``union with $B^-_\varepsilon$'', which exhibits the former as the subset $\{\B:[B^-]\leq\B\leq[B^+]\}$ of the latter.
The former satisfies the hypothesis of Corollary \ref{minimalrealizeall}, and thus is a lattice.
Using the fact that any finite subset of $\sL(M;A)$ has upper and lower bounds (by Lemma \ref{maximalcompression}), the lattice property now follows in general.
\end{proof}

\section{Proof of the main result}

\subsection{Smith theory}\label{secsmith}

Smith theory relates the topology of a space $X$ with the topology of the fixed set $X^{\ZZ/p}$ of a $\ZZ/p$ action on $X$ (for $p$ a prime).
Smith theory was introduced by Smith \cite{smithI,smithII,smithIII}, and a detailed study was undertaken in Borel \cite{borel}.
We recall here the main results of Smith theory as formulated in Bredon \cite{bredonsheaftheory}.

\begin{definition}[{Homology manifolds \cite[p329 V.9.1]{bredonsheaftheory}}]
Let $L$ be a field.
An $L$-homology $n$-manifold is a locally compact Hausdorff space $X$ satisfying the following two properties:
\begin{itemize}
\item There exists $k<\infty$ such that $H^i_c(X,\mathcal F)=0$ for $i>k$ and any sheaf $\mathcal F$ of $L$ vector spaces on $X$ (compare \cite[II.16]{bredonsheaftheory}).
\item The sheafification of $U\mapsto\Hom(H^i_c(U,L),L)$ vanishes for $i\ne n$ and is locally constant with one-dimensional stalks for $i=n$ (compare \cite[V.3]{bredonsheaftheory}).
\end{itemize}
(Here $H_c^\ast$ denotes compactly supported sheaf cohomology.)
\end{definition}

\begin{theorem}[{\cite[p388 V.16.32]{bredonsheaftheory}}]\label{htopmanifold}
If $n\leq 2$, then any $L$-homology $n$-manifold is a topological $n$-manifold (not necessarily paracompact).
\qed
\end{theorem}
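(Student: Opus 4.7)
The plan is to handle the three cases $n=0,1,2$ separately, in each case extracting from the two axioms a local homological model of $\RR^n$ and then upgrading that model to a genuine topological chart.

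First, one unpacks the hypotheses to produce, at each $x\in X$, a neighborhood basis of connected open sets $U$ with $H_c^i(U,L)=0$ for $i\ne n$ and $H_c^n(U,L)\cong L$, and such that the inclusion $U\setminus\{x\}\hookrightarrow U$ makes $U\setminus\{x\}$ cohomologically a punctured Euclidean space: $H_c^\ast(U\setminus\{x\},L)$ matches $H_c^\ast(\RR^n\setminus\{0\},L)$. Local connectedness of $X$ is extracted from the same data. The case $n=0$ then follows immediately: one-dimensionality of the stalk in degree $0$ forces small enough $U$ to reduce to $\{x\}$, so $X$ is discrete and hence a $0$-manifold.

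For $n=1$, a small connected $U\ni x$ has the cohomology of an interval, and $U\setminus\{x\}$ has the cohomology of two points. Local connectedness then forces $U\setminus\{x\}$ to have exactly two components, each an $L$-homology $1$-manifold with the cohomology of a half-line. A direct argument (approximating $U$ by exhausting compacts and propagating the local chart outward along the two half-line components) produces a homeomorphism $U\cong\RR$ sending $x$ to $0$.

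The substantive case is $n=2$. The extracted local data says small punctured neighborhoods of $x$ have the compactly supported $L$-cohomology of $S^1$, so $x$ admits a cofinal family of ``homological disk'' neighborhoods. The main obstacle — and the bulk of the proof — is converting this homological control into a topological disk chart. The strategy is (i) to exhibit a cofinal nested sequence $U_1\supseteq U_2\supseteq\cdots$ of connected neighborhoods of $x$ whose frontiers $\partial U_k$ are genuine Jordan curves (rather than merely homological $S^1$'s), using the $1$-dimensionality and local connectedness of the frontier together with its cohomological identification with $S^1$; (ii) to apply the Jordan--Schoenflies theorem (in appropriate sphere-like compactifications furnished by the $S^1$-cohomology of $\partial U_k$) to promote each $\overline{U_k}$ to a topological closed disk; and (iii) to invoke a monotone union theorem of Wilder type to conclude that the union of the resulting increasing sequence of open disks (or rather, an appropriate intersection with $x$ as the common center) is itself an open disk. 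Step (i) is the crux: avoiding pathological frontier behaviour of Bing or Fox--Artin type requires that the hypothesis genuinely cut out the topological, and not merely the homological, disk, which is where the low-dimensional assumption $n\leq 2$ is essential.
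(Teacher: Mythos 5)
The paper gives no proof of this statement; it is quoted with a closing tombstone directly from Bredon's \emph{Sheaf Theory} (V.16.32), where it in turn rests on Wilder's work. So any proof you supply is new relative to the paper, and unfortunately the sketch has a real gap precisely at the step you yourself flag as the crux.

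The cases $n=0,1$ are essentially fine, and the overall shape for $n=2$ (produce a nested cofinal family of open disks around $x$, then apply a monotone-union theorem) is Wilder's shape. But step (i) as described does not go through. The frontier of a small connected neighborhood of $x$ in a $2$-cm is not automatically $1$-dimensional, not automatically locally connected, and not a cohomology $1$-manifold; none of these are inherited by frontiers even in a genuine surface (a neighborhood clipped by a $\sin(1/x)$-type set in $\RR^2$ already has non-locally-connected frontier). Even granting a compact, connected, locally connected, $1$-dimensional frontier with $\check H^1\cong L$, that still need not be a simple closed curve: a circle with a dendritic appendage attached at a point satisfies all of those properties. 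Step (ii) also presupposes that $\overline{U_k}$ sits in something already known to be $S^2$ so that Schoenflies applies, which is essentially the conclusion being sought. And the reference to Bing and Fox--Artin is misplaced: those are $3$-dimensional wild-embedding phenomena, whereas in dimension $2$ every Jordan curve is tame by Schoenflies --- the real danger is not wildness of curves but the failure to produce any Jordan-curve frontier at all. Wilder's actual argument (see his \emph{Topology of Manifolds}) goes instead through separation-theoretic characterizations of the $2$-sphere and $2$-cell due to Moore, Zippin, and Kline: one first extracts local separation and unicoherence properties of the $2$-cm from Alexander duality and local connectivity, and only then do Jordan curves and Schoenflies enter.
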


\begin{theorem}[{Local Smith Theory \cite[pp409--10 V.20.1, V.20.2]{bredonsheaftheory}}]\label{smithforhomologymflds}
For any action of $\ZZ/p$ on a $\ZZ/p$-homology $n$-manifold $X$, the fixed set $F=X^{\ZZ/p}$ is a disjoint union of open pieces $\{F_{(r)}\subseteq F\}_{0\leq r\leq n}$, where $F_{(r)}$ is a $\ZZ/p$-homology $r$-manifold.
Furthermore, if $F_{(r)}\ne\varnothing$ then $p(n-r)$ is even.
\qed
\end{theorem}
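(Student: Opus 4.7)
The plan is to prove this via the standard Smith cochain complex machinery, working with coefficients in the field $L = \mathbb{F}_p$. The key algebraic input is the structure of the group ring $R = \mathbb{F}_p[\ZZ/p] = \mathbb{F}_p[g]/(g^p - 1)$: the elements $\sigma = g - 1$ and $\tau = 1 + g + \cdots + g^{p-1}$ satisfy $\sigma \tau = 0$, $\sigma^{p-1} = \tau$ up to a unit, and give short exact sequences $0 \to \sigma R \to R \to \tau R \to 0$ that provide the algebraic backbone of the theory. First I would construct Smith sheaves on the quotient $X/(\ZZ/p)$: given the constant sheaf $\mathbb{F}_p$ on $X$ (viewed as a $\ZZ/p$-equivariant sheaf), push forward to $X/(\ZZ/p)$ and decompose using $\sigma$ and $\tau$. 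This yields a pair of distinguished triangles in the derived category relating the sheaf cohomology of $X$, of the fixed set $F$, and of auxiliary "non-fixed" Smith complexes.

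Next, I would localize to verify that $F$ is an $\mathbb{F}_p$-homology manifold. Fix $x \in F$ and run the Smith triangles on sufficiently small invariant neighborhoods $U$ of $x$ with compact closure. The cohomological dimension bound on $X$ passes through the Smith triangles to give a bound on the cohomological dimension of $F$, and the local-stalk axiom for $F$ can be read off: the sheafification of $V \mapsto \Hom(H^i_c(V, \mathbb{F}_p), \mathbb{F}_p)$ on $F$ is computed from the corresponding data on $X$ via the Smith exact sequences, and one checks it is concentrated in a single degree with one-dimensional stalks. The dimension $r$ at which this concentration occurs is locally constant on $F$ but can vary between components, giving the decomposition $F = \bigsqcup_r F_{(r)}$ into open pieces, each an $\mathbb{F}_p$-homology $r$-manifold.

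For the parity claim that $p(n-r)$ is even, this is vacuous when $p = 2$, so assume $p$ odd; then we must show $n - r$ is even at every point of $F_{(r)}$. The local Smith sequence relates the orientation sheaf of $X$ at a fixed point $x \in F_{(r)}$ to that of $F$ at $x$, twisted by the action of $\ZZ/p$ on the "transverse" Smith cohomology which, after the stalk analysis, behaves like the orientation of a transverse $\mathbb{F}_p$-representation of dimension $n - r$. For odd $p$, the image of $g$ in $\Aut(\mathbb{F}_p) = \mathbb{F}_p^\times$ via the determinant on the transverse part must be $1$ (since $g^p = 1$ and there are no nontrivial $p$-th roots of unity in $\mathbb{F}_p^\times$), and pairing this with the sign arising from $\sigma^{p-1} = \pm \tau$ forces the transverse dimension to be even. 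The main obstacle will be keeping track of exactly which sheaves and which twistings appear at each step of the Smith machinery, especially in the generality of homology manifolds where one has no smooth structure to fall back on; this is the bookkeeping that occupies the bulk of Bredon's treatment, and I would simply cite \cite[V.20]{bredonsheaftheory} for the details rather than reproduce it.
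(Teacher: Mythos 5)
The paper does not prove this theorem; it is stated with a \verb|\qed| as a direct citation to Bredon's book, so there is no ``paper proof'' to compare against in the usual sense. Your first two paragraphs give a reasonable gloss of the machinery Bredon actually uses: the $\sigma = g-1$ and $\tau = 1+g+\cdots+g^{p-1}$ operators acting on the pushforward of the constant sheaf, the resulting Smith exact sequences/triangles, and localization to small invariant neighborhoods of a fixed point to verify the two cohomology-manifold axioms for $F$ and the local constancy of $r$. Deferring the bookkeeping to \cite[V.20]{bredonsheaftheory} is the right call and matches what the paper does.

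Your third paragraph, the parity argument, does not work as written, and this is a genuine gap. First, the ``sign arising from $\sigma^{p-1}=\pm\tau$'' does not exist: a binomial computation in $\mathbb{F}_p[\ZZ/p]$ using $\binom{p-1}{k}\equiv(-1)^k \pmod p$ gives $(g-1)^{p-1}=(-1)^{p-1}\tau=\tau$ for every prime $p$, so there is no sign to pair with anything. Second, and more seriously, the observation that $\ZZ/p$ must act trivially on the one-dimensional $\mathbb{F}_p$-orientation at $x$ (true for odd $p$ since $\mathbb{F}_p^\times$ has no elements of order $p$) places no constraint whatsoever on the codimension $n-r$; the determinant heuristic is a dead end in the homology-manifold setting. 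The mechanism that actually forces $n-r$ even is the Borel localization isomorphism $H^*_{\ZZ/p}(X,X\setminus x)\cong H^*_{\ZZ/p}(F,F\setminus x)$ in large degrees, precisely the isomorphism the paper invokes in its proof of Theorem~\ref{propositionsmiththeory} to establish the $p=2$ orientation-reversal statement. For odd $p$ the triviality of the orientation action collapses both spectral sequences, giving $H^{*-n}(B\ZZ/p;\mathbb{F}_p)$ and $H^{*-r}(B\ZZ/p;\mathbb{F}_p)$ respectively; since $H^*(B\ZZ/p;\mathbb{F}_p)\cong\mathbb{F}_p[u]\otimes\Lambda(v)$ becomes $2$-periodic after inverting $u$, and the comparison map is a map of $H^*(B\ZZ/p)$-modules, the two shifts must agree mod $2$, hence $n-r$ is even. (Equivalently one may argue via the Euler characteristic of the quotient of a local transverse link by the free $\ZZ/p$-action.) Replacing the determinant-and-sign paragraph with this localization argument would close the gap.
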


\begin{lemma}[{Alexander duality \cite[Lemma 3.3]{pardonhilbertsmith}}]\label{alexanderduality}
For any closed subset $X\subseteq\RR^n$, there is an isomorphism $\check H^\ast_c(X)=\tilde H_{n-1-\ast}(\RR^n\setminus X)$.
\qed
\end{lemma}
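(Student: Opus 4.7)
The plan is to reduce the statement to classical Alexander duality on the sphere via the one-point compactification. Let $X^+ = X \cup \{\infty\} \subseteq S^n = \RR^n \cup \{\infty\}$ denote the one-point compactification of $X$, viewed as a compact subspace of $S^n$. First I would invoke the standard identification $\check H^\ast_c(X) \cong \tilde{\check H}^\ast(X^+)$, which expresses compactly supported \v{C}ech cohomology of a locally compact Hausdorff space as the reduced \v{C}ech cohomology of its one-point compactification (based at the point at infinity). Next I would apply classical Alexander duality on $S^n$ to the compact subset $X^+$ to obtain $\tilde{\check H}^k(X^+) \cong \tilde H_{n-k-1}(S^n \setminus X^+)$. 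Finally, the observation that $S^n \setminus X^+ = \RR^n \setminus X$ delivers the stated isomorphism.

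The classical Alexander duality on $S^n$ (the key input) is proved for a general compact subset $Y \subseteq S^n$ as follows. Write $Y = \bigcap_\alpha N_\alpha$ as a filtered intersection of compact polyhedral neighborhoods, which is possible since $S^n$ is a metric ANR. By continuity of \v{C}ech cohomology, $\check H^\ast(Y) = \colim_\alpha H^\ast(N_\alpha)$. For each polyhedral neighborhood $N_\alpha$, choose a regular subneighborhood which is a compact manifold-with-boundary; Poincar\'e--Lefschetz duality together with excision and the reduced long exact sequence of the pair $(S^n, S^n \setminus N_\alpha^\circ)$ (using $\tilde H_\ast(S^n) = 0$ for $\ast < n$) then yields $H^k(N_\alpha) \cong \tilde H_{n-k-1}(S^n \setminus N_\alpha^\circ)$. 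Passing to the colimit over $\alpha$ and using that singular homology commutes with increasing unions of open sets gives $\tilde H_{n-k-1}(S^n \setminus Y)$, as desired.

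The main obstacle is correctly handling the potential wildness of $X$: the subset $X^+ \subseteq S^n$ need not be a CW subcomplex or even an ANR, so singular cohomology of $X^+$ would give the wrong answer, which is precisely why \v{C}ech cohomology appears in the statement. The two limiting identifications---$\check H^\ast_c(X) = \tilde{\check H}^\ast(X^+)$ and the transition from $\colim_\alpha H^k(N_\alpha)$ to $\check H^k(Y)$---rely on continuity and cofinality properties which must be checked carefully but are standard in the sheaf-theoretic treatments of Alexander duality such as Bredon's book cited in the preceding results. Modulo these technicalities, the argument is a clean reduction to the finite CW case on the sphere.
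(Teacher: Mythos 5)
Your argument is correct and is the standard proof of this form of Alexander duality: one-point compactify to identify $\check H^\ast_c(X)$ with $\tilde{\check H}^\ast(X^+)$ for $X^+=X\cup\{\infty\}\subseteq S^n$, then invoke classical Alexander duality on $S^n$, itself obtained via continuity of \v{C}ech cohomology along a cofinal system of compact regular (manifold-with-boundary) neighborhoods together with Poincar\'e--Lefschetz duality for each. The present paper does not reprove the lemma but quotes it from the cited reference, which uses the same one-point-compactification reduction; the only imprecision in your sketch is that one should take the $N_\alpha$ themselves to be regular neighborhoods (compact PL manifolds-with-boundary) rather than passing to a subneighborhood of an arbitrary polyhedral $N_\alpha$, since the latter need not have the same cohomology.
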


The following summarizes everything we will need from the results recalled above:

\begin{theorem}\label{propositionsmiththeory}
Let $\sigma:M\to M$ be a homeomorphism of a topological three-manifold $M$ of prime order.
The fixed set $M^\sigma$ is a disjoint union of open pieces $\{M^\sigma_{(r)}\}_{0\leq r\leq 3}$ where $M^\sigma_{(r)}$ is a topological $r$-manifold.
Moreover, $M^\sigma_{(2)}$ can be non-empty only when $p=2$, and in this case $\sigma$ reverses orientation near $M^\sigma_{(2)}$.
\end{theorem}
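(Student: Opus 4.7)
The plan is to apply the local Smith theorem (Theorem~\ref{smithforhomologymflds}) to the cyclic group generated by $\sigma$ acting on $M$, and to promote the resulting $\ZZ/p$-homology-manifold pieces of the fixed set to topological manifolds. First I would note that any topological three-manifold is automatically a $\ZZ/p$-homology three-manifold (its local compactly supported cohomology agrees with that of $\RR^3$), so Theorem~\ref{smithforhomologymflds} yields the decomposition $M^\sigma=\bigsqcup_{r=0}^3 M^\sigma_{(r)}$ into disjoint open subsets, with each $M^\sigma_{(r)}$ a $\ZZ/p$-homology $r$-manifold. For $r\in\{0,1,2\}$, Theorem~\ref{htopmanifold} immediately upgrades these to topological $r$-manifolds.

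The case $r=3$ lies outside the scope of Theorem~\ref{htopmanifold}, so I would argue separately that $M^\sigma_{(3)}$ is open in $M$ and hence a topological three-manifold automatically. Given $x\in M^\sigma_{(3)}$, the defining local-stalk condition provides arbitrarily small open neighborhoods $V\subseteq M^\sigma$ of $x$ with $H_c^3(V;\ZZ/p)\ne 0$. Choosing a Euclidean coordinate ball $U_0\cong\RR^3$ about $x$ in $M$ and replacing $V$ by $M^\sigma\cap U_0'$ for some yet smaller Euclidean ball $U_0'\subseteq U_0$, the set $V$ is closed in $U_0'\cong\RR^3$, and Alexander duality (Lemma~\ref{alexanderduality}) gives $H_c^3(V;\ZZ/p)=\tilde H_{-1}(\RR^3\setminus V;\ZZ/p)$, which vanishes unless $\RR^3\setminus V$ is empty. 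Hence $V=U_0'$, exhibiting $x$ as an interior point of $M^\sigma$ in $M$.

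The restriction that $M^\sigma_{(2)}$ be nonempty only for $p=2$ is immediate from the parity clause of Theorem~\ref{smithforhomologymflds}: taking $n=3$ and $r=2$ one needs $p(n-r)=p$ to be even. For the orientation-reversal statement, I would invoke the classical refinement of local Smith theory with integer coefficients (worked out cohomologically in Bredon's sheaf-theoretic framework): at any point $x$ of a codimension-$k$ fixed-set component of a $\ZZ/p$-action on a $\ZZ$-homology $n$-manifold, the generator acts on the local orientation class in $H_n(M,M\setminus\{x\};\ZZ)\cong\ZZ$ by the sign $(-1)^k$. Taking $k=1$ here (the codimension of $M^\sigma_{(2)}$ in $M$) yields the sign $-1$, so $\sigma$ reverses orientation near $M^\sigma_{(2)}$.

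I expect the final orientation-reversal step to be the main obstacle. In the smooth or PL category this is an immediate consequence of $\det(-1)=-1$ on the one-dimensional normal representation of the $\ZZ/2$-action, but in the purely topological category one cannot speak of a normal representation and must instead extract the sign from the cohomological action on the local orientation sheaf, or equivalently from a direct side-swapping analysis of $\sigma$ in a small coordinate chart around $M^\sigma_{(2)}$.
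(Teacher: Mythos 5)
Your proposal follows the paper's proof closely for the bulk of the argument: apply Theorem~\ref{smithforhomologymflds} to get the decomposition into $\ZZ/p$-homology $r$-manifolds, upgrade the pieces with $r\leq 2$ via Theorem~\ref{htopmanifold}, deduce openness of $M^\sigma_{(3)}$ by the Alexander-duality argument $H^3_c\leftrightarrow\tilde H_{-1}$, and read off the constraint $p=2$ for $r=2$ from the parity clause. All of that matches what the paper does.

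The orientation-reversal step is where you depart, and it is the one place you cannot simply cite your way through. You invoke a ``classical refinement of local Smith theory with integer coefficients'' asserting that the generator acts on $H_n(M,M\setminus\{x\};\ZZ)$ by $(-1)^k$ in codimension $k$, attributing it to Bredon's framework. But the paper explicitly flags that this is \emph{not} stated in \cite{bredonsheaftheory} and supplies a derivation precisely because no off-the-shelf reference exists for it in the generality needed (topological actions on topological/homology manifolds). The paper's derivation goes through the fundamental Smith isomorphism $H^\ast_{\ZZ/p}(X,X\setminus x)\xrightarrow{\ \sim\ }H^\ast_{\ZZ/p}(F,F\setminus x)$ in large degrees, identifies both sides via the spectral sequence with (shifted) cohomology of $\RR P^\infty$ with either trivial or twisted coefficients according to the orientation behavior of $\sigma$ at $x$, and compares vanishing patterns (large even degrees for the untwisted case, odd degrees for the twisted case) to force the twisted option when $n-r$ is odd. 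You correctly sense that this is the main obstacle and that a cohomological argument on the orientation sheaf is what is required, but your proposal stops at asserting the conclusion rather than providing the argument. To close this, you would need to reproduce essentially the paper's equivariant-cohomology computation (or an equivalent one); the normal-representation shortcut you mention is unavailable here, as you yourself note.
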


\begin{proof}
By Theorem \ref{smithforhomologymflds}, the fixed set $M^\sigma$ is a disjoint union of open pieces $M^\sigma_{(r)}$ each of which is a $\ZZ/p$-homology $r$-manifold for $r\leq 3$.
For $r\leq 2$, Theorem \ref{htopmanifold} implies that $M^\sigma_{(r)}$ is a topological $r$-manifold.
For $r=3$, we in fact have that $M^\sigma_{(3)}\subseteq M$ is an open subset (and thus \emph{a fortiori} a topological $3$-manifold); this follows from applying Lemma \ref{alexanderduality} to $M^\sigma_{(3)}\cap B\subseteq B$ for small open balls $\RR^3\cong B\subseteq M$ ($\tilde H_{-1}(B\setminus M^\sigma_{(3)})=\check H^3_c(B\cap M^\sigma_{(3)})$ is nonzero for small balls $B$ since $M^\sigma_{(3)}$ is a homology $3$-manifold, whence $B\subseteq M^\sigma_{(3)}$).
Theorem \ref{smithforhomologymflds} also ensures that $r=2$ can only happen when $p=2$.

It remains to show that the action reverses orientation near $M^\sigma_{(2)}$.
This is not stated explicitly in \cite{bredonsheaftheory}, so we show how to derive it.
The fundamental isomorphism underlying Smith theory is that for any $x\in F$, the restriction map
\begin{equation}
H^\ast_{\ZZ/p}(X,X\setminus x)\to H^\ast_{\ZZ/p}(F,F\setminus x)
\end{equation}
is an isomorphism in sufficiently large degrees (this ultimately follows from the fact that $\ZZ/p$ acts freely on the finite-dimensional space $X\setminus F$).
In our present situation, we have $p=2$ and $x\in F_{(r)}$, so $(X,X\setminus x)\simeq S^n$ and $(F,F\setminus x)\simeq S^r$.
Hence we have $H^\ast_{\ZZ/2}(F,F\setminus x)=H^{\ast-r}(\RR P^\infty)$ and $H^\ast_{\ZZ/2}(X,X\setminus x)=H^{\ast-n}(\RR P^\infty)$ or $H^{\ast-n}(\RR P^\infty,\nu)$ (where $\nu$ denotes the nontrivial local system on $\RR P^\infty$ with fiber $\ZZ$) according to the action of $\ZZ/2$ on orientations of $X$ at $x$ (these isomorphisms come from the spectral sequence $E_2^{p,q}=H^p(BG,H^q(Z))\Rightarrow H^{p+q}_G(Z)$, which in our cases $Z=(X,X\setminus x)$ or $(F,F\setminus x)$ has no further differentials since $H^\ast(Z)$ is concentrated in a single degree).
Now $H^\ast(\RR P^\infty)$ vanishes in (large) even degrees and $H^\ast(\RR P^\infty,\nu)$ vanishes in odd degrees, which in the present situation of $n-r$ odd implies that the action must reverse orientation at $x$.
\end{proof}

\subsection{Smoothing theory for three-manifolds}

The fundamental smoothing result for homeomorphisms of three-manifolds is the following:

\begin{theorem}\label{thmbinghomeoapproximation}
Every homeomorphism $\psi:M\to N$ between smooth three-manifolds is a uniform limit of diffeomorphisms $\tilde\psi:M\to N$.
If $\psi$ is a diffeomorphism (onto its image) over $\Nbd K$ for $K\subseteq M$ closed, then we may take $\tilde\psi=\psi$ over $\Nbd K$.
\qed
\end{theorem}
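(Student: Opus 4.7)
The plan is to reduce this statement to Bing's PL approximation theorem, using Moise triangulations and Whitehead--Munkres smoothing to pass between the smooth and PL categories. The non-relative version is a direct combination of three classical ingredients, each of which admits a well-known relative form: (i) every smooth three-manifold admits a compatible PL triangulation, unique up to PL isotopy (Moise, Whitehead, Munkres); (ii) every homeomorphism between PL three-manifolds is a uniform limit of PL homeomorphisms (Bing); (iii) every PL homeomorphism between compatibly triangulated smooth three-manifolds is a uniform limit of diffeomorphisms (Munkres). The relative form of the theorem will follow by tracking the rel $K$ condition through these three steps.

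Concretely, I would choose nested neighborhoods $\Nbd''K \subseteq \Nbd'K \subseteq \Nbd K$ on all of which $\psi$ is a diffeomorphism. First, fix any PL triangulation of $N$ compatible with its smooth structure and pull it back along $\psi$ on $\Nbd'K$; this produces a compatible PL triangulation of the open set $\Nbd'K \subseteq M$. By (i), any extension to a compatible PL triangulation of $M$ differs from the pullback on $\Nbd'K \setminus \Nbd''K$ only by a PL isotopy, so we may choose such an extension that agrees with the pullback on $\Nbd''K$. With these triangulations, $\psi$ is PL on $\Nbd''K$. Next, apply (ii) in its relative form to obtain PL homeomorphisms $\psi_{PL}:M\to N$ uniformly close to $\psi$ which coincide with $\psi$ on a neighborhood of $K$. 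Finally, apply (iii) in its relative form to smooth $\psi_{PL}$ to a diffeomorphism $\tilde\psi:M\to N$ uniformly close to $\psi_{PL}$ and equal to $\psi_{PL}$, hence to $\psi$, on a neighborhood of $K$.

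The main obstacle is not any substantive new argument but rather bookkeeping: one must locate and carefully cite the relative versions of (i), (ii), and (iii), and choose the nested neighborhoods $\Nbd''K \subseteq \Nbd'K \subseteq \Nbd K$ so that the three approximations can be concatenated without interfering with one another near $K$. The relative versions in question are standard, so no additional ideas are needed beyond the classical work of Moise, Bing, Whitehead, and Munkres.
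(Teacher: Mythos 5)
The paper does not actually prove this theorem: it is stated with a \verb|\qed| and the text immediately afterward simply attributes it to Moise (\emph{Affine structures in $3$-manifolds V}, Theorem 2) and Bing (\emph{An alternative proof that $3$-manifolds can be triangulated}, Theorem 8), with the further remark that alternative proofs are given by Shalen (via the loop theorem) and Hamilton (via the Kirby torus trick). Your proposal is a correct and standard sketch of how to extract the smooth statement, including its relative form, from the classical PL literature via the Moise/Whitehead/Munkres equivalence between PL and DIFF in dimension three; this is exactly the route the paper's citation to Moise and Bing implicitly takes, so you are in agreement with the paper rather than diverging from it. The one thing worth flagging is that none of the three ingredients is a triviality to state in the precise rel-$K$ form you need (in particular the relative Hauptvermutung used to extend the pulled-back triangulation of $\Nbd'K$ to a compatible triangulation of $M$ agreeing with it on $\Nbd''K$, and the relative version of Munkres' smoothing of PL homeomorphisms), so if you were writing this out you would need to give precise citations; but these relative statements are indeed in the literature, and no new idea is required.
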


Theorem \ref{thmbinghomeoapproximation} is due to Moise \cite[Theorem 2]{moiseV} and Bing \cite[Theorem 8]{bingtriangulation}, both using bare-hands methods of point-set topology.
Alternative proofs can be found in Shalen \cite[Approximation Theorem]{shalen} (using methods of smooth three-manifold topology, such as the loop theorem of Papakyriakopoulos \cite{papakyriakopoulos}) and Hamilton \cite[Theorem 1]{hamilton} (using the torus trick of Kirby \cite{kirby,kirbysiebenmann}, also see Hatcher \cite{hatchertorus}).
An immediate corollary of Theorem \ref{thmbinghomeoapproximation} is:

\begin{corollary}\label{smoothstructureexists}
Every topological three-manifold-with-boundary $M$ has a smooth structure.
We may take this smooth structure to coincide with any given smooth structure over $\Nbd K$ for closed $K\subseteq M$.
\qed
\end{corollary}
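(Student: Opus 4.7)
The plan is to construct the smooth structure on $M$ by inductive patching along a locally finite countable open cover, using Theorem~\ref{thmbinghomeoapproximation} to reconcile smooth structures on overlapping charts.

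First I would fix a countable locally finite open cover of $M\setminus K$ by coordinate charts $V_1,V_2,\ldots$, each homeomorphic to $\RR^3$ (or to $\RR^2\times[0,\infty)$ for charts meeting $\partial M$) with $\bar V_i\cap K=\varnothing$, together with nested shrinks $V_i'\Subset V_i''\Subset V_i$ such that $\Nbd K\cup\bigcup_iV_i'$ still covers $M$. Each $V_i$ inherits a smooth structure $\Sigma_i$ pulled back along its defining chart.

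Next I would build inductively a smooth structure $T_n$ on the expanding open set $W_n:=\Nbd K\cup V_1'\cup\cdots\cup V_n'$, restricting to the given smooth structure on $\Nbd K$. The inductive step addresses the overlap $W_n\cap V_{n+1}$, where both $T_n$ and $\Sigma_{n+1}$ are defined smooth structures and the identity map between them is a homeomorphism of smooth three-manifolds. Theorem~\ref{thmbinghomeoapproximation} then supplies a diffeomorphism $\varphi:(W_n\cap V_{n+1},\Sigma_{n+1})\to(W_n\cap V_{n+1},T_n)$ arbitrarily uniformly close to the identity. Extending $\varphi$ by a collared cutoff in $V_{n+1}\cong\RR^3$ to a self-homeomorphism $\hat\varphi$ of $V_{n+1}$ that equals $\varphi$ on $V_{n+1}'\cap W_n$ and is the identity outside $V_{n+1}''$, the pullback $\hat\varphi^*\Sigma_{n+1}$ is a smooth structure on $V_{n+1}$ agreeing with $T_n$ on $V_{n+1}'\cap W_n$; gluing with $T_n$ defines $T_{n+1}$ on $W_{n+1}$. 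Since each modification occurs inside $V_{n+1}'$, disjoint from $K$, the $T_n$ agree with the given smooth structure on $\Nbd K$ throughout, and local finiteness ensures stabilization to a smooth structure on $M$.

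The main technical obstacle is the cutoff extension producing $\hat\varphi$ in the topological category: one must interpolate between the small self-homeomorphism $\varphi$ and the identity across the collar $V_{n+1}''\setminus V_{n+1}'$. This is handled by taking $\varphi$ sufficiently uniformly close to the identity and invoking local contractibility of $\Homeo(\RR^3)$ (Chernavskii/Edwards--Kirby) to produce a small isotopy from the identity to $\varphi$, which may then be cut off via a bump function on the chart to yield the desired $\hat\varphi$.
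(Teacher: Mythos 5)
The paper records this statement without proof, calling it an ``immediate corollary'' of Theorem~\ref{thmbinghomeoapproximation}; your inductive chart-by-chart patching argument is the standard way to fill in that derivation, and it is essentially correct. Two small corrections. First, you want the \emph{pushforward} $\hat\varphi_*\Sigma_{n+1}$ (equivalently $(\hat\varphi^{-1})^*\Sigma_{n+1}$), not the pullback $\hat\varphi^*\Sigma_{n+1}$: on the overlap $\hat\varphi=\varphi$ is a diffeomorphism from $\Sigma_{n+1}$ to $T_n$, so it is the pushforward of $\Sigma_{n+1}$ that restricts to $T_n$ there, while the pullback $\varphi^*\Sigma_{n+1}$ is a third, unwanted structure. (Alternatively, run the argument with $\varphi$ chosen in the other direction.) Second, the phrase ``a small isotopy from the identity to $\varphi$, which may then be cut off via a bump function'' is not quite right in the topological category: the naive cutoff $x\mapsto\phi_{\lambda(x)}(x)$ of a topological isotopy $\phi_t$ need not be injective. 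What you actually want to invoke is the Edwards--Kirby deformation theorem (the compactly supported refinement of local contractibility of $\Homeo(\RR^n)$), which directly furnishes, for $\varphi$ sufficiently uniformly close to the identity and compacta $C\Subset N\Subset V_{n+1}$, a homeomorphism $\hat\varphi$ of $V_{n+1}$ close to the identity with $\hat\varphi=\varphi$ near $C$ and $\hat\varphi=\id$ outside $N$. With these adjustments the proof is complete and agrees with the intended (unwritten) argument; it is worth noting that the resulting proof leans on Edwards--Kirby, which is heavier machinery than the original Moise--Bing proofs of this smoothing theorem, but this is a legitimate trade-off once Theorem~\ref{thmbinghomeoapproximation} has been granted.
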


Here is another corollary which we will need:

\begin{corollary}\label{latticehomeoact}
The lattice $\sL(M;A)$ from \S\ref{sectionlatticethreemanifold} carries a natural action of the group of homeomorphisms of $M$ which are diffeomorphisms over $\Nbd\partial M$ and preserve $A$.
\end{corollary}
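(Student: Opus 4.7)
The plan is to define the group action by smoothing $\psi$ via Theorem \ref{thmbinghomeoapproximation}. Given a homeomorphism $\psi:M\to M$ which is a diffeomorphism over $\Nbd\partial M$ and preserves $A$, together with a smooth representative $B\subseteq M$ of a class in $\sL(M;A)$, I would apply Theorem \ref{thmbinghomeoapproximation} with $K=\partial M$ to produce a diffeomorphism $\tilde\psi:M\to M$ which agrees with $\psi$ over $\Nbd\partial M$ and is uniformly arbitrarily close to $\psi$. Then $\tilde\psi(B)$ is a smooth codimension-zero submanifold with $\tilde\psi(B)\cap\partial M=\psi(A)=A$ and compact properly embedded boundary $\tilde\psi(\partial B)$; since $\tilde\psi$ is a diffeomorphism, incompressibility transfers from $B$ to $\tilde\psi(B)$. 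Define $\psi_*[B]:=[\tilde\psi(B)]\in\sL(M;A)$.

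The main work is to verify well-definedness. Independence of the smooth representative within its isotopy class is immediate, since any smooth ambient isotopy pushes forward under $\tilde\psi$. The substantive point is independence of the choice of smoothing. Given two diffeomorphisms $\tilde\psi_1$ and $\tilde\psi_2$ each equal to $\psi$ over $\Nbd\partial M$ and each sufficiently close to $\psi$ in the uniform topology, the submanifolds $\tilde\psi_1(B)$ and $\tilde\psi_2(B)$ coincide near $\partial M$ and have boundaries which are smooth surfaces uniformly $C^0$-close to one another. I would invoke a tubular neighborhood argument: for sufficiently close approximations, the surface $\partial\tilde\psi_2(B)$ lies in a tubular neighborhood of $\partial\tilde\psi_1(B)$ and is identified with a smooth section of its normal bundle, and convex combination of sections yields a smooth isotopy from $\partial\tilde\psi_1(B)$ to $\partial\tilde\psi_2(B)$ rel $\Nbd\partial M$. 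Extending this to an ambient isotopy by smooth isotopy extension (using compactness of $\partial B$) provides the desired isotopy of codimension-zero submanifolds. I expect this to be the main technical step, though it reduces entirely to classical smooth differential topology once $\tilde\psi_1$ and $\tilde\psi_2$ are produced; the only care needed is in choosing the approximation error in Theorem \ref{thmbinghomeoapproximation} small relative to the width of the tubular neighborhood of $\partial B$.

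Functoriality is then routine. For $\psi=\id$ one may take $\tilde\psi=\id$, so $\id_*=\id$. For a composition $\psi_2\psi_1$, one may take $\tilde\psi_2\tilde\psi_1$ as a joint smoothing: this is a diffeomorphism agreeing with $\psi_2\psi_1$ over $\Nbd\partial M$ and uniformly close to $\psi_2\psi_1$, so $(\psi_2\psi_1)_*[B]=[\tilde\psi_2(\tilde\psi_1(B))]=(\psi_2)_*(\psi_1)_*[B]$ by the well-definedness established above. Preservation of the lattice order, and hence of joins and meets, is immediate, since any diffeomorphism carries an inclusion $B\subseteq B'$ to an inclusion $\tilde\psi(B)\subseteq\tilde\psi(B')$.
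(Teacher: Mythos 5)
Your overall framework — smooth $\psi$ via Theorem~\ref{thmbinghomeoapproximation} (keeping it equal to $\psi$ near $\partial M$) and define $\psi_*[B]:=[\tilde\psi(B)]$ — is the same as the paper's, and your functoriality and order-preservation arguments are fine once well-definedness is in place. The gap is in the well-definedness step. You assert that for two sufficiently close smoothings $\tilde\psi_1,\tilde\psi_2$ the surface $\partial\tilde\psi_2(B)$ lies in a tubular neighborhood of $\partial\tilde\psi_1(B)$ \emph{and is a smooth section of its normal bundle}, so that convex combination of sections gives an isotopy. The first assertion is fine, but the second is false in general. Theorem~\ref{thmbinghomeoapproximation} gives only uniform ($C^0$) approximation, so $\tilde\psi_1$ and $\tilde\psi_2$ are $C^0$-close but need not be $C^1$-close, and a smooth surface that is merely $C^0$-close to $\partial\tilde\psi_1(B)$ can oscillate so as to be tangent to the normal fibers in many places; the tubular neighborhood projection then fails to be a diffeomorphism onto $\partial\tilde\psi_1(B)$, and there is no ``section'' to interpolate. (Already in two dimensions, a diffeomorphism $C^0$-close to the identity can carry a straight line to a curve with small but sharp $S$-bends that is not a graph over the original line.) So the convex-combination step, and hence your isotopy, does not follow.

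The paper sidesteps the $C^0$-vs-$C^1$ issue entirely by exploiting the poset structure of $\sL(M;A)$ established via Proposition~\ref{propcylinder}. Fix two parallel representatives $B'\subseteq B$ of $\B$, with $\partial B'$ having positive distance from $\partial B$. For any two smoothings $\tilde\psi_1,\tilde\psi_2$ sufficiently uniformly close to $\psi$, pure $C^0$-closeness gives the sandwichings $\tilde\psi_1(B')\subseteq\tilde\psi_2(B)$ and $\tilde\psi_2(B')\subseteq\tilde\psi_1(B)$, hence $\tilde\psi_1(\B)\leq\tilde\psi_2(\B)$ and $\tilde\psi_2(\B)\leq\tilde\psi_1(\B)$ in $\sL(M;A)$, and antisymmetry of the partial order gives $\tilde\psi_1(\B)=\tilde\psi_2(\B)$. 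No direct isotopy between the image surfaces is needed, and the ``natural'' part of the statement (compatibility with composition, preservation of the order) then falls out exactly as you argued.
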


\begin{proof}
To define the action of a given homeomorphism $\gamma:M\to M$ smooth near the boundary and preserving $A$, argue as follows.
Let $\B\in\sL(M;A)$, and fix two parallel representatives $B'\subseteq B\subseteq M$.
Now for any two smoothings $\tilde\gamma$ and $\tilde{\tilde\gamma}$ sufficiently close to $\gamma$, we have $\tilde\gamma(B')\subseteq\tilde{\tilde\gamma}(B)$ and $\tilde{\tilde\gamma}(B')\subseteq\tilde\gamma(B)$, which implies (using antisymmetry of $\leq$) that $\tilde\gamma(\B)=\tilde{\tilde\gamma}(\B)$.
Hence we may define $\gamma(\B):=\tilde\gamma(\B)$ for any diffeomorphism $\tilde\gamma$ sufficiently close to $\gamma$.
It is immediate from this definition that this is a group action preserving the partial order.
\end{proof}

We will also need the following taming result for embeddings of surfaces into three-manifolds:

\begin{theorem}\label{tamereembedding}
Every continuous proper embedding $\iota:F\hookrightarrow M$ of a surface into a three-manifold is a uniform limit of tame proper embeddings $\tilde\iota$.
If $\iota$ is tame over $\Nbd K$ for closed $K\subseteq F$, then we may take $\tilde\iota=\iota$ over $\Nbd K$.
\qed
\end{theorem}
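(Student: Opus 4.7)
The plan is to deduce this from Bing's classical approximation theorem for surfaces in $\RR^3$ \cite{bingtriangulation}, propagated chart-by-chart over $M$ with a controlled support condition.

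First, I would localize. Since $\iota$ is proper and $M$ is paracompact, pick a locally finite cover of $M$ by smooth coordinate balls $U_\alpha \cong \RR^3$, arranged so that the $U_\alpha$ meeting $\iota(K)$ lie entirely in a neighborhood of $\iota(K)$ on which $\iota$ is already tame. Enumerate the remaining charts as $U_1, U_2, \ldots$, and fix slightly shrunken copies $U_n' \Subset U_n'' \Subset U_n$ such that the $U_n'$ still cover $\iota(F) \setminus \iota(K)$.

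Second, I would construct $\tilde\iota$ by induction on $n$. At stage $n$, given $\iota_{n-1}$ already tame on $\iota_{n-1}(F) \cap (\Nbd(\iota(K)) \cup \bigcup_{k<n} U_k')$, I apply the relative form of Bing's taming theorem inside the chart $U_n''$ to produce $\iota_n$ which (i) agrees with $\iota_{n-1}$ outside $U_n''$ and on the previously tamed region, (ii) is tame on $\iota_n^{-1}(U_n')$, and (iii) is uniformly within $\varepsilon/2^n$ of $\iota_{n-1}$. The strengthening of Bing's theorem permitting one to fix the embedding on a closed set where it is already tame is what makes (i) compatible with (ii); this is essentially the content of Bing's side approximation work, from which the required relative version follows by combining tame approximations on each side across the already-tame locus.

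The main obstacle is verifying convergence to a proper tame embedding with the required properties. Local finiteness of the cover ensures that every point of $F$ is touched by only finitely many stages, so $\iota_n$ eventually stabilizes in a neighborhood of each point; hence $\tilde\iota := \lim_n \iota_n$ is a well-defined embedding uniformly within $\varepsilon$ of $\iota$, and properness is preserved since each correction has compact support in $M$. Tameness on $U_n'$ persists in the limit because no later stage modifies the map there, and these subcharts together with $\Nbd K$ cover $F$. The relative condition $\tilde\iota = \iota$ on $\Nbd K$ is automatic since no correction ever acts on a neighborhood of $K$.
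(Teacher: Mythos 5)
The paper does not prove this theorem; it is stated with a qed symbol and attributed to Bing \cite[Theorems 7 and 8]{bingapproximation}, with no argument supplied. Your proposal --- a chart-by-chart reduction to the model case in a ball --- is a plausible outline of how the general manifold statement would follow from the local one, and the bookkeeping (locally finite cover, shrunken charts $U_n'\Subset U_n''\Subset U_n$, corrections of geometrically decreasing size with compact support, local stabilization of the sequence) is handled sensibly: local finiteness does give a well-defined limit and preserves properness and tameness of the limit.

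The genuine gap is that you treat the relative form of Bing's taming theorem inside a single chart as available off the shelf. That relative form --- approximate tamely while fixing the embedding over a neighborhood of a closed set where it is already tame, and keep control across the interface between the fixed region and the region being corrected --- is precisely the nontrivial local content of the theorem you are trying to prove. Your one-sentence justification (``this is essentially the content of Bing's side approximation work, from which the required relative version follows by combining tame approximations on each side across the already-tame locus'') asserts rather than derives it. Without it the induction has nothing to run on: at stage $n$ you are invoking the very statement being proved, just with $M$ a coordinate ball. Bing's side approximation theorem is a related but distinct result, and extracting from it a rel-$\Nbd K$ taming statement for properly embedded surfaces-with-boundary in a ball is itself a real argument, not a one-line corollary. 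Since the paper simply cites Bing for this theorem, your approach is morally aligned with the intended source, but as written it is a reduction to an unestablished special case rather than a self-contained proof.
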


Theorem \ref{tamereembedding} is due to Bing \cite[Theorems 7 and 8]{bingapproximation} (later generalized to arbitrary $2$-complexes in \cite[Theorem 5]{bingtriangulation}) on the way to the proof of Theorem \ref{thmbinghomeoapproximation}.
Building on this work, Craggs showed that the tame approximation $\tilde\iota$ is up to isotopy for $2$-complexes with no local cut points in \cite[Theorem 8.2]{craggsisotopy}.
We will only need this result for surfaces:

\begin{theorem}\label{tamereembeddingunique}
Fix a continuous proper embedding $\iota:F\hookrightarrow M$ of a surface into a three-manifold.
For every uniform neighborhood $U_\varepsilon$ of $\iota$, there exists a uniform neighborhood $U_\delta$ of $\iota$ such that for all pairs of tame proper embeddings $\iota_1,\iota_2:F\hookrightarrow M$ in $U_\delta$, there is an isotopy between $\iota_1$ and $\iota_2$ inside $U_\varepsilon$.
\qed
\end{theorem}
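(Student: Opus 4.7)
The plan is to follow Craggs' strategy: first arrange the two tame approximations to sit in a common bicollar structure over $\iota_1$, then vertically interpolate. By Theorem \ref{tamereembedding} applied separately to $\iota_1$ and $\iota_2$, we may assume, after preliminary isotopies of size at most $\varepsilon/4$, that both are smooth embeddings. Since $\iota_1$ is then tame, it admits a bicollar $C_1 : F \times (-\eta, \eta) \hookrightarrow M$ extending $\iota_1$ at $t = 0$ (with the usual $\ZZ/2$-adjustment for one-sided components). For $\delta$ chosen small enough depending on $\eta$ and the uniform continuity modulus of $\iota$, the image $\iota_2(F)$ is contained in the image of $C_1$.

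The main task is to deform $\iota_2$ by a small ambient isotopy so that it becomes a ``graph'' over $\iota_1$, namely of the form $x \mapsto C_1(\phi(x), h(x))$ for some $\phi \in \Diff(F)$ close to $\id$ and some function $h : F \to (-\eta, \eta)$. This graphification is the technical heart of the argument: working patch by patch in small coordinate charts of $\iota_1(F)$, invoke Bing's side approximation theorem together with Alexander's trick (both available in dimension three) to push $\iota_2(F)$ onto a graph inside each patch, while disturbing already-graphified regions only in a controlled way. Iterating over a locally finite cover of $\iota_1(F)$ of small diameter produces $\iota_2$ as a global graph over $\iota_1$.

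With $\iota_2$ expressed as $x \mapsto C_1(\phi(x), h(x))$, define the ambient isotopy $H_s(x) = C_1(\phi_s(x), (1 - s) h(x))$ for $s \in [0, 1]$, where $\phi_s$ is an isotopy from $\phi_0 = \phi$ to $\phi_1 = \id$ inside $\Diff(F)$; such a $\phi_s$ exists because $\phi$ is close to the identity and $\Diff(F)$ is locally path-connected. This $H_s$ gives the desired isotopy from $\iota_2$ to $\iota_1$, which stays inside $U_\varepsilon$ provided the diameters of the coordinate patches and $\delta$ were chosen sufficiently small at the outset relative to $\eta$ and $\varepsilon$.

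The main obstacle is the graphification step. Tame embeddings of surfaces into 3-manifolds are only locally flat (not necessarily $C^1$), so the projection $\iota_2(F) \to F$ through the bicollar need not be a local homeomorphism a priori: $\iota_2(F)$ could in principle ``wrinkle'' across the transverse fibers of $C_1$. Ruling this out in a quantitative way genuinely uses Bing's three-dimensional side approximation technology and Alexander's $2$-sphere theorem, and the analogous strategy fails in higher dimensions, where $C^0$-wild obstructions to taming embeddings exist.
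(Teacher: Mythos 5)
The paper does not prove Theorem \ref{tamereembeddingunique}; it quotes it from Craggs \cite[Theorem 8.2]{craggsisotopy} (uniqueness up to isotopy of tame approximations of $2$-complexes without local cut points), so there is no internal argument to compare against. Your outline follows a plausible bicollar/graphification strategy, but the step you yourself flag as ``the technical heart of the argument'' is a genuine gap, not a proof. Asserting that Bing's side approximation theorem and Alexander's theorem can be combined ``patch by patch'' to push $\iota_2(F)$ onto a graph over $\iota_1(F)$ ``while disturbing already-graphified regions only in a controlled way'' gives no mechanism: you do not say how side approximation (a statement about one-sided tame approximation of possibly wild surfaces) eliminates the folding/wrinkling of the smooth but only $C^0$-close surface $\iota_2(F)$ across the fibers of $C_1$, nor how to reconcile the local graph presentations on overlaps, nor why the aggregate of the infinitely many local isotopies remains uniformly small. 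That step is essentially the entire content of Craggs' theorem; the proposal restates its conclusion rather than derives it.

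Two smaller inaccuracies: Theorem \ref{tamereembedding} produces tame approximations of a \emph{continuous} embedding, and does not assert that an already-tame embedding can be smoothed by a small \emph{isotopy}; the opening reduction to smooth $\iota_1,\iota_2$ should instead invoke PL/smooth compatibility in dimension three. And once $\iota_2=C_1(\phi,h)$, keeping $H_s$ inside $U_\varepsilon$ requires the isotopy $\phi_s$ to be uniformly $C^0$-small, which needs a quantitative local contractibility statement for $\Diff(F)$ (or $\Homeo(F)$) in the $C^0$ sense, not merely that $\Diff(F)$ is locally path-connected in its usual topology; also the application in Proposition \ref{tamerefl} requires an ambient isotopy, so $H_s$ still needs a controlled isotopy extension.
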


\subsection{Setting up the proof}

Given an action $G\acts M$, we consider the following open subsets of $M$:
\begin{itemize}
\item$M^\free\subseteq M$ denotes the set of points $x\in M$ with trivial stabilizer $G_x=1$.
\item$M^\refl\subseteq M$ denotes the set of points $x\in M$ for which either $G_x=1$ or $G_x=\ZZ/2$ and $x\in M^{G_x}_{(2)}$ (i.e.\ $M^{G_x}$ is locally a surface near $x$).
The closed locus $F^\refl\subseteq M^\refl$ with isotropy group $\ZZ/2$ is a topological surface, possibly wildly embedded.
\end{itemize}
We have obvious inclusions $M^\free\subseteq M^\refl\subseteq M$.
An action $G\acts M$ is called \emph{generically free} iff $M^\free\subseteq M$ is dense.
By Theorem \ref{propositionsmiththeory}, an action is generically free as long as no nontrivial element of $G$ acts trivially on an entire connected component of $M$.

\begin{lemma}\label{connectedeffective}
The general case of Theorem \ref{main} follows from the special case of $M$ connected and $\varphi:G\acts M$ generically free.
\end{lemma}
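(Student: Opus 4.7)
The plan is a two-stage reduction: first from a general $M$ to a single $G$-orbit of connected components, and then within that to the effective quotient by the kernel of the action, making the action generically free.

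\emph{Stage 1 (reducing to a single orbit of components).} The group $G$ permutes $\pi_0(M)$, so $M$ decomposes as the disjoint union of $G$-invariant clopen subsets $M_\alpha$, one per $G$-orbit in $\pi_0(M)$. Since each $M_\alpha$ is open in $M$, any uniform neighborhood $U$ of $\Delta_M \subseteq M\times M$ restricts to a uniform neighborhood of $\Delta_{M_\alpha}$ in $M_\alpha\times M_\alpha$, and the rel-$\Nbd K$ clause localizes; thus it suffices to smooth each $\varphi|_{M_\alpha}$ separately and take the disjoint union. For a single orbit, choose a component $M_0\subseteq M_\alpha$, let $H=\Stab_G(M_0)$, and identify $M_\alpha\cong G\times_H M_0\cong\bigsqcup_{gH\in G/H}gM_0$. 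The $G$-action on $M_\alpha$ is equivalent data to the $H$-action on $M_0$ via the induced-action construction, and this equivalence preserves smoothness, uniform proximity, and smoothness near $K\cap M_0$. We are thus reduced to smoothing an action of a finite group $H$ on a connected smooth three-manifold $M_0$.

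\emph{Stage 2 (reducing to generic freeness).} Let $N\trianglelefteq H$ be the kernel of $H\to\Homeo(M_0)$, and let $\bar H:=H/N$ act effectively on $M_0$. I claim this $\bar H$-action is generically free, i.e., $M_0^{[h]}_{(3)}=\varnothing$ for every prime-order $[h]\in\bar H$. By Theorem \ref{propositionsmiththeory}, $M_0^{[h]}_{(3)}$ is open in $M_0$. It is also closed in $M_0$: the fixed set $M_0^{[h]}$ is closed in $M_0$, and each lower-dimensional piece $M_0^{[h]}_{(r)}$ ($r\leq 2$) is open in $M_0^{[h]}$, so their union $M_0^{[h]}\setminus M_0^{[h]}_{(3)}$ is open in $M_0^{[h]}$, making $M_0^{[h]}_{(3)}$ closed in $M_0^{[h]}$ and hence in $M_0$. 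Since $M_0$ is connected, $M_0^{[h]}_{(3)}\in\{\varnothing,M_0\}$; the latter would force $[h]$ to act as $\id$ on $M_0$, contradicting $[h]\neq 1$ in $\bar H$. This is the main (and really only nontrivial) step of the proof; everything else is bookkeeping.

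\emph{Conclusion.} Apply the special case of Theorem \ref{main} (connected, generically free) to $\bar H\acts M_0$ to obtain a smooth approximation $\tilde\psi:\bar H\acts M_0$ uniformly close to the original action, and agreeing with it over $\Nbd(K\cap M_0)$ when the rel clause is in force (noting that $K\cap M_0$ is $H$-invariant, hence descends to a $\bar H$-invariant closed subset of $M_0$). Pull back along $H\twoheadrightarrow\bar H$ to get a smooth $H$-action $\tilde\varphi_{M_0}$ on $M_0$; elements of $N$ act as $\id$ in both the original and approximating actions, so uniform proximity and rel-$\Nbd(K\cap M_0)$ agreement are preserved. The induced-action construction extends $\tilde\varphi_{M_0}$ to a smooth $G$-action on $M_\alpha=G\times_H M_0$ that is uniformly close to $\varphi|_{M_\alpha}$ and agrees with $\varphi$ over $\Nbd K\cap M_\alpha$. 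Finally, take the disjoint union over all orbits $\alpha$ to produce the desired smooth approximation of $\varphi$ on $M$.
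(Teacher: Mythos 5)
Your reduction has the same two-stage structure as the paper's: pass to a single $G$-orbit of components, restrict to $H=\Stab_G(M_0)$ acting on $M_0$, quotient by the kernel to get a generically free action, and your open-and-closed argument for why $M_0^{[h]}_{(3)}\in\{\varnothing, M_0\}$ is essentially the invariance-of-domain argument the paper uses. Stage 2 and the kernel step are fine.

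However, Stage 1 has a genuine gap. You assert that the identification $M_\alpha\cong G\times_H M_0$ makes the correspondence between $G$-actions on $M_\alpha$ and $H$-actions on $M_0$ ``preserve smoothness,'' and in the conclusion you invoke ``the induced-action construction'' to promote the smooth $H$-action $\tilde\varphi_{M_0}$ to a smooth $G$-action on $M_\alpha$. But the identification $M_\alpha\cong G\times_H M_0$ is given by the maps $\varphi(g_j):M_0\to g_jM_0$ for coset representatives $g_j$, and these are merely homeomorphisms — the original action $\varphi$ is only continuous, and each $g_jM_0$ carries its own smooth structure from $M$, which need not be related to that of $M_0$ via $\varphi(g_j)$. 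So the induced smooth structure on $G\times_H M_0$ is generally \emph{not} the given smooth structure on $M_\alpha$, and a smooth induced action on the former does not transport to a smooth action on the latter. Concretely, for $g\notin H$ the map $\varphi(g_{j'})\tilde\varphi_{M_0}(h)\varphi(g_j)^{-1}:g_jM_0\to g_{j'}M_0$ need not be smooth. The paper closes this gap by also approximating the ``transition'' homeomorphisms $M_0\to g_jM_0$ by diffeomorphisms using the Bing--Moise theorem (Theorem \ref{thmbinghomeoapproximation}); once those are diffeomorphisms the identification becomes smooth and the rest of your argument works. You should add this step.
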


\begin{proof}
We reduce to the case of $M$ connected as follows.
First, by decomposing $\pi_0(M)$ into $\varphi(G)$-orbits, we reduce to the case that $G$ acts transitively on $\pi_0(M)$.
Next, fix a connected component $M_0\subseteq M$, so we have $M=\bigsqcup_{g\in G/\Stab([M_0])}gM_0$.
Now the action of $G$ on $M$ is determined uniquely by the data of (1) the action of $\Stab([M_0])$ on $M_0$ and (2) the homeomorphisms $M_0\to gM_0$ provided by any fixed choice of representatives in $G$ of the nontrivial elements of $G/\Stab([M_0])$.
The homeomorphisms (2) can be approximated uniformly by diffeomorphisms by Theorem \ref{thmbinghomeoapproximation}, and smoothing the action (1) of $\Stab([M_0])$ on $M_0$ requires precisely the connected case of Theorem \ref{main}.

Now consider $\varphi:G\acts M$ with $M$ connected.
By invariance of domain, if $M^{\varphi(g)}_{(3)}\ne\varnothing$ then $M^{\varphi(g)}_{(3)}=M$, that is $g$ acts trivially on $M$.
Thus the action of $G/\ker\varphi$ on $M$ is generically free, and it suffices to smooth this action.
\end{proof}

The complement of $M^\refl$ is essentially one-dimensional:

\begin{lemma}\label{reflcomplement}
If $G\acts M$ is generically free, then
\begin{equation}
M\setminus M^\refl=\bigcup_{\begin{smallmatrix}g\in G\setminus\{1\}\\g^p=1\end{smallmatrix}}M^g_{(0)}\cup M^g_{(1)}.
\end{equation}
\end{lemma}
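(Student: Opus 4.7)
The plan is to establish the two inclusions separately. The containment $(\supseteq)$ should be immediate: if $x \in M^g_{(0)} \cup M^g_{(1)}$ for a prime-order $g \in G$, then $g \in G_x \setminus \{1\}$; were $x \in M^\refl$, we would have $G_x = \{1, h\}$ with $x \in M^h_{(2)}$, forcing $g = h$ and contradicting the disjointness of the Smith pieces in Theorem \ref{propositionsmiththeory}.

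For the main containment $(\subseteq)$, I would fix $x \in M \setminus M^\refl$ (so $G_x \neq 1$) and seek a prime-order $g \in G_x$ with $x \in M^g_{(0)} \cup M^g_{(1)}$. If $G_x$ contains an element $g$ of odd prime order $p$, then the parity constraint ``$p(3-r)$ even'' from Theorem \ref{propositionsmiththeory}, combined with generic freeness ($r \neq 3$), forces $r = 1$. Otherwise $G_x$ is a nontrivial $2$-group; if some involution $g \in G_x$ satisfies $x \in M^g_{(0)} \cup M^g_{(1)}$, we are done. So the remaining case is that every involution $g \in G_x$ has $x \in M^g_{(2)}$, and hence reverses orientation near $x$ by Theorem \ref{propositionsmiththeory}. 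If $G_x$ contains two distinct involutions $g, h$, then $gh \neq 1$ preserves orientation near $x$; taking a prime-order power $k$ of $gh$, the element $k$ still preserves orientation locally, so Smith's orientation-reversal statement forces $x \notin M^k_{(2)}$, and generic freeness gives $x \notin M^k_{(3)}$, yielding $x \in M^k_{(0)} \cup M^k_{(1)}$ as desired.

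This leaves two sub-cases: $G_x = \ZZ/2$, which directly puts $x$ in $M^{G_x}_{(2)} \subseteq M^\refl$ and contradicts our hypothesis; and $G_x$ a $2$-group of order $\geq 4$ with a unique involution $g$, which I expect to be the main obstacle. The latter covers cyclic $\ZZ/2^n$ and generalized quaternion $Q_{2^n}$ groups, and in either case $G_x$ contains an element $k$ of order $4$ with $k^2 = g$. I plan to rule this case out by a local side-swapping argument: since $g$ is an orientation-reversing involution with $x \in M^g_{(2)}$, the fixed $2$-manifold separates a small neighborhood of $x$ into two sides which $g$ must exchange (seen by restricting $g$ to a $g$-invariant transverse arc at $x$, on which $g$ acts as an orientation-reversing involution with unique fixed point, hence as a reflection). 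On the other hand $k$ preserves $M^g_{(2)}$ and so either preserves or exchanges the two sides, making $k^2$ preserve them in either case --- contradicting $g = k^2$. The real technical care lies in justifying this side-swapping step rigorously in the purely topological setting, in particular in constructing the invariant transverse arc and verifying the local two-sidedness of $M^g_{(2)}$ near $x$ from Smith theory alone.
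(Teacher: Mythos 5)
Your proof is correct in outline, but it takes a considerably longer route than the paper's one-paragraph argument, and in doing so you set yourself a harder problem than necessary. The paper's key move is to observe that the orientation character $\omega : G_x \to \ZZ/2$ (recording which elements of $G_x$ reverse a local orientation at $x$) has kernel $G_x^+$ of index at most $2$. Thus if $\left|G_x\right|>2$ then $G_x^+$ is nontrivial, and any prime-order element $g\in G_x^+$ preserves orientation at $x$. Theorem \ref{propositionsmiththeory} then rules out $x\in M^g_{(2)}$ (which would force $g$ to reverse orientation), and generic freeness rules out $x\in M^g_{(3)}$, so $x\in M^g_{(0)}\cup M^g_{(1)}$ directly. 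There is no need to split into the cases of odd prime order versus $2$-group, nor to subdivide the $2$-group case by the number of involutions.

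In particular, the sub-case you flag as the main obstacle --- $G_x$ a $2$-group of order $\geq 4$ with a unique involution $g$, which is a square $g=k^2$ --- dissolves algebraically: since $g=k^2$, we have $\omega(g)=\omega(k)^2=1$, so $g$ preserves orientation near $x$, and Theorem \ref{propositionsmiththeory} immediately gives $x\notin M^g_{(2)}$, contradicting your case hypothesis. The local side-swapping argument you sketch (fixed surface locally separating, $g$ exchanging the two sides, $k$ unable to square to a side-swap) is a valid geometric shadow of this algebraic fact, but it requires exactly the technical care you worry about --- local two-sidedness of a possibly wild $M^g_{(2)}$, existence of an invariant transverse arc --- none of which is needed once you notice that a square always lies in the kernel of the orientation homomorphism. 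I would encourage you to replace the entire $2$-group case analysis with the orientation-character observation; it is shorter, avoids the topological subtleties you correctly identify as delicate, and matches what the Smith-theoretic statement in Theorem \ref{propositionsmiththeory} is designed to deliver.
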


\begin{proof}
The non-trivial direction is to show that if $x\in M\setminus M^\refl$ then it is in the right hand side above.
If $x\notin M^\refl$, then either $G_x=\ZZ/2$ and $x\in M^{G_x}_{(r)}$ for $r\leq 1$ (in which case $x$ is by definition contained in the right hand side above), or $\left|G_x\right|>2$.
In the latter case $\left|G_x\right|>2$, the subgroup of $G_x$ which preserves orientation at $x$ (which has index at most $2$) is non-trivial and hence contains some element of prime order, so $x$ is in the right hand side by Theorem \ref{propositionsmiththeory}.
\end{proof}

\subsection{Smoothing over the free locus}

\begin{proposition}\label{smoothoverfree}
Every continuous action $\varphi:G\acts M$ of a finite group on a smooth three-manifold is a uniform limit of actions $\tilde\varphi:G\acts M$ which are smooth over $M^{\tilde\varphi\mhyphen\free}=M^{\varphi\mhyphen\free}$ and coincide with $\varphi$ over the complement.
If $\varphi$ is smooth over $\Nbd K$ for $K\subseteq M$ closed and $\varphi(G)$-invariant, then we may take $\tilde\varphi=\varphi$ over $\Nbd K$.
\end{proposition}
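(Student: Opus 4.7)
The plan is to follow the strategy sketched in the introduction: impose a new smooth structure on $M^\free$ for which $\varphi$ is smooth, and then compensate by a small diffeomorphism approximating the identity.

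First, since $\varphi$ acts freely and properly discontinuously on the open set $M^\free\subseteq M$, the quotient $M^\free/G$ is a topological three-manifold. By Corollary \ref{smoothstructureexists} it admits a smooth structure, which may be chosen to agree with the one descending from $M$ over $q(\Nbd K\cap M^\free)$ (the descent being defined since $\varphi$ is smooth over $\Nbd K$). Pulling this back along the covering $q:M^\free\to M^\free/G$ gives a $G$-invariant smooth structure on $M^\free$; let $M^\free_s$ denote the resulting smooth manifold. By construction, $\varphi|_{M^\free}:G\acts M^\free_s$ is smooth, and the two smooth structures on $M^\free$ agree over $\Nbd K\cap M^\free$.

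Next I would apply Theorem \ref{thmbinghomeoapproximation} to the identity map $\mathrm{id}:M^\free\to M^\free_s$, a homeomorphism of smooth three-manifolds which is already a diffeomorphism over $\Nbd K\cap M^\free$. For any preassigned neighborhood $U$ of the diagonal of $M\times M$, Theorem \ref{thmbinghomeoapproximation} (applied with the neighborhood $U\cap(M^\free\times M^\free)$ of the diagonal of $M^\free\times M^\free$) produces a diffeomorphism $\alpha:M^\free\to M^\free_s$ satisfying $(x,\alpha(x))\in U$ for all $x\in M^\free$ and $\alpha=\mathrm{id}$ over $\Nbd K$. I then define
\begin{equation}
\tilde\varphi(g) := \begin{cases} \alpha^{-1}\varphi(g)\alpha & \text{on } M^\free, \\ \varphi(g) & \text{on } M\setminus M^\free. \end{cases}
\end{equation}
This is a group action (conjugation respects composition), is smooth over $M^\free$ (factoring through the smooth action $\varphi|_{M^\free}$ on $M^\free_s$ via the diffeomorphism $\alpha$), agrees with $\varphi$ over $\Nbd K$ and over $M\setminus M^\free$, and has free locus exactly $M^\free$ (free on $M^\free$ by conjugation, identical to $\varphi$ elsewhere).

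It remains to verify continuity and uniform closeness on $M$. Continuity across the frontier of $M^\free$ holds because $\alpha$ and $\alpha^{-1}$ both tend to the identity as one approaches $M\setminus M^\free$ (which is exactly $(x,\alpha(x))\in U$ interpreted as closeness in $M\times M$): if $M^\free\ni x_n\to x\in M\setminus M^\free$ then $\alpha(x_n)\to x$, so $\varphi(g)\alpha(x_n)\to\varphi(g)x$ by continuity of $\varphi(g)$, and then $\alpha^{-1}\varphi(g)\alpha(x_n)\to\varphi(g)x$ by closeness of $\alpha^{-1}$ to the identity. Uniform closeness $(\varphi(g)x,\tilde\varphi(g)x)\in U'$ on $M^\free$ follows by a triangle-type argument from the closeness of $\alpha,\alpha^{-1}$ to the identity combined with the equicontinuity of the finite collection $\{\varphi(g)\}_{g\in G}$ (valid on the locally compact Hausdorff space $M$), arranged by taking $U$ sufficiently small at the outset. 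The main obstacle is purely bookkeeping: calibrating the ``uniform'' in Theorem \ref{thmbinghomeoapproximation} with respect to the larger manifold $M$ rather than the open subset $M^\free$, so that the conjugated action extends continuously to the non-free locus. Everything else reduces cleanly to the Bing--Moise smoothing theorem.
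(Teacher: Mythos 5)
Your approach is exactly the one the paper takes: smooth the quotient $M^{\varphi\mhyphen\free}/G$ via Corollary \ref{smoothstructureexists}, pull back, and conjugate by a Bing--Moise approximation $\alpha$ of the identity map $M^\free\to M^\free_s$, including the observation that the smooth structure on the quotient can be chosen to agree with the ambient one over the image of $\Nbd K$.

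However, the step in which you argue that $\alpha$ extends continuously to $M$ by the identity on $M\setminus M^\free$ does not work as written. You apply Theorem \ref{thmbinghomeoapproximation} with the neighborhood $U\cap(M^\free\times M^\free)$ of the diagonal, where $U$ is a neighborhood of the diagonal in $M\times M$, and then assert that $(x,\alpha(x))\in U$ forces $\alpha(x_n)\to x_0$ whenever $x_n\to x_0\in M\setminus M^\free$. That implication is false: a fixed neighborhood of the diagonal has fibers $U_{x}$ of definite size as $x\to x_0$, so it does not constrain $\alpha(x_n)$ to converge to $x_0$ (e.g.\ $\alpha$ could shift points near the frontier by a bounded amount parallel to the frontier). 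What one actually needs is to apply Bing--Moise with a neighborhood of the diagonal \emph{in $M^\free\times M^\free$} whose thickness decays to zero as one approaches $M\setminus M^\free$ (and is also eventually contained in $U$); such neighborhoods are admissible in the uniform topology on the non-compact $M^\free$, and Theorem \ref{thmbinghomeoapproximation} then produces $\alpha$ extending continuously by the identity. This is the content of the paper's one-line remark that ``Theorem \ref{thmbinghomeoapproximation} allows us to take $\alpha$ to extend continuously to a homeomorphism $\bar\alpha:M\to M$ acting as the identity on the complement''. You already flag this as ``bookkeeping'' in your closing paragraph, so the idea is present, but the parenthetical justification you give for the extension is the wrong one and should be replaced by the shrinking-control argument.
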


\begin{proof}
The quotient $M^{\varphi\mhyphen\free}/\varphi(G)$ is a topological three-manifold, which by Corollary \ref{smoothstructureexists} has a smooth structure.
Denote by $(M^{\varphi\mhyphen\free})^s$ the pullback smooth structure on $M^{\varphi\mhyphen\free}$, so now $\varphi:G\acts(M^{\varphi\mhyphen\free})^s$ is smooth.
Now the identity map $\id:M^{\varphi\mhyphen\free}\to(M^{\varphi\mhyphen\free})^s$ is a homeomorphism, which by Theorem \ref{thmbinghomeoapproximation} can be approximated by a diffeomorphism $\alpha:M^{\varphi\mhyphen\free}\to(M^{\varphi\mhyphen\free})^s$.
Thus the action $\alpha^{-1}\varphi\alpha:G\acts M^{\varphi\mhyphen\free}$ is smooth.
Theorem \ref{thmbinghomeoapproximation} allows us to take $\alpha$ to extend continuously to a homeomorphism $\bar\alpha:M\to M$ acting as the identity on the complement of $M^{\varphi\mhyphen\free}$.
Hence $\tilde\varphi:=\bar\alpha^{-1}\varphi\bar\alpha:G\acts M$ is the desired approximation of $\varphi$.
\end{proof}

\subsection{Smoothing over the tame reflection locus}

Let $F^\refl_\tame\subseteq F^\refl$ denote the open subset where $F^\refl$ is tamely embedded inside $M^\refl$, and let $M^\trefl:=M^\free\cup F^\refl_\tame$.

\begin{proposition}\label{smoothovertrefl}
Every continuous action $\varphi:G\acts M$ of a finite group on a smooth three-manifold is a uniform limit of actions $\tilde\varphi:G\acts M$ which are smooth over $M^{\varphi\mhyphen\trefl}\subseteq M^{\tilde\varphi\mhyphen\trefl}$ and coincide with $\varphi$ over the complement of $M^{\varphi\mhyphen\trefl}$.
If $\varphi$ is smooth over $\Nbd K$ for $K\subseteq M$ closed and $\varphi(G)$-invariant, then we may take $\tilde\varphi=\varphi$ over $\Nbd K$.
\end{proposition}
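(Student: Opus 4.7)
The plan is to mirror the proof of Proposition~\ref{smoothoverfree}, replacing $M^\free$ with $M^\trefl=M^\free\cup F^\refl_\tame$ and working with the branched quotient $M^\trefl/\varphi(G)$ in place of the free quotient $M^\free/\varphi(G)$.  The essential structural input is the claim that $M^\trefl/\varphi(G)$ is a topological three-manifold with boundary, with interior the image of $M^\free$ and boundary the image of $F^\refl_\tame$.

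Granting this claim, the argument runs exactly as in Proposition~\ref{smoothoverfree}.  First apply Corollary~\ref{smoothstructureexists} (in its rel form) to equip $M^\trefl/\varphi(G)$ with a smooth structure agreeing with the pushforward of the given smooth structure of $M$ on a $\varphi(G)$-invariant neighborhood of the image of $K\cap M^\trefl$; pull this back through the quotient map to a smooth structure $(M^\trefl)^s$ on $M^\trefl$ with respect to which $\varphi$ acts smoothly and which coincides with the original smooth structure on $\Nbd K\cap M^\trefl$.  Next, apply Theorem~\ref{thmbinghomeoapproximation} (rel $K$) to approximate $\id:M^\trefl\to(M^\trefl)^s$ by a diffeomorphism $\alpha$ equal to the identity on $\Nbd K$ and sufficiently uniformly close to $\id$ that it extends continuously by the identity across $M\setminus M^\trefl$ to a homeomorphism $\bar\alpha:M\to M$.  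Then $\tilde\varphi:=\bar\alpha^{-1}\varphi\bar\alpha$ is the desired smoothing: it is smooth over $M^\trefl$ since $\alpha$ is a diffeomorphism onto $(M^\trefl)^s$ on which $\varphi$ is smooth, it agrees with $\varphi$ off $M^\trefl$ since $\bar\alpha=\id$ there, and it agrees with $\varphi$ on $\Nbd K$ since $\bar\alpha=\id$ on $\Nbd K$.

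The main technical obstacle is verifying the structural claim above near the image of $F^\refl_\tame$.  At a point $x\in F^\refl_\tame$ with stabilizer $G_x=\langle\sigma\rangle\cong\ZZ/2$, we know that $F^\refl$ is locally flatly embedded near $x$ (by definition of $F^\refl_\tame$) and that $\sigma$ reverses orientation across $F^\refl$ (by Theorem~\ref{propositionsmiththeory}).  What must be produced is a $\sigma$-invariant open neighborhood $V\ni x$ and a homeomorphism of triples $(V,F^\refl\cap V,\sigma)\cong\bigl(F\times(-\varepsilon,\varepsilon),\,F\times\{0\},\,(y,t)\mapsto(y,-t)\bigr)$ (with $F=F^\refl\cap V$), for then $V/\sigma\cong F\times[0,\varepsilon)$ is manifestly a half-space.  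Such an equivariant bicollar can be obtained either by a direct $\ZZ/2$-equivariant adaptation of Brown's collar theorem for locally flat codimension-one submanifolds, or by combining Theorems~\ref{tamereembedding} and~\ref{tamereembeddingunique}: use Bing's taming theorem to push $F^\refl$ near $x$ by a small isotopy to a surface smooth in some auxiliary smooth structure on a neighborhood, apply the standard smooth equivariant tubular neighborhood theorem to a smoothing of $\sigma$ there, and transport the result back using Craggs' isotopy uniqueness.  This local topological standardization of tame reflections is the only genuinely new ingredient beyond Proposition~\ref{smoothoverfree}; everything else is a formal extension of the free-quotient argument.
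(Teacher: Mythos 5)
Your high-level plan — mirror Proposition~\ref{smoothoverfree} with $M^\trefl/\varphi(G)$ in the role of $M^\free/\varphi(G)$ — is exactly the paper's, and the skeleton of the argument (smooth the quotient via Corollary~\ref{smoothstructureexists}, lift, approximate $\id$ by a diffeomorphism via Theorem~\ref{thmbinghomeoapproximation}, conjugate) is correct.

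However, you have misplaced the technical weight. The ``structural claim'' that $M^\trefl/\varphi(G)$ is a topological manifold-with-boundary is in fact immediate and requires no equivariant bicollar: near $x\in F^\refl_\tame$, tameness says $F^\refl$ is bicollared (locally flat), so the map sending each point to its image under $\id$ on $\overline{V^+}$ and under $\sigma$ on $V^-$ is a closed continuous surjection $V\to\overline{V^+}$ which descends to a homeomorphism $V/\sigma\cong\overline{V^+}$, manifestly a half-space. The real new content, which you pass over with ``pull this back through the quotient map,'' is lifting the chosen smooth structure from the quotient back across the branch locus $F^\refl_\tame$ — this is precisely the step for which the paper fixes a smooth boundary collar on the smoothed quotient and uses it to install the product model $\partial\times(-\varepsilon,\varepsilon)$ with $\sigma:(y,t)\mapsto(y,-t)$ upstairs. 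Your second suggested method for producing an equivariant bicollar — ``apply the smooth equivariant tubular neighborhood theorem to a smoothing of $\sigma$'' — is circular, since a local smoothing of $\sigma$ near $F^\refl_\tame$ is exactly what this proposition is constructing; and your first suggestion (an equivariant Brown's collar theorem) is, once unwound, just the collar-the-quotient-and-lift mechanism the paper uses, which is better stated in that form. So: right approach and right answer, but the one genuinely nontrivial step should be located at the smooth-structure lift, and handled via a boundary collar of the quotient, not via a purported equivariant tubular neighborhood upstairs.
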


\begin{proof}
The proof of Proposition \ref{smoothoverfree} applies without significant change.
The quotient $M^{\varphi\mhyphen\trefl}/\varphi(G)$ is now a topological three-manifold-with-boundary, again smoothable by Corollary \ref{smoothstructureexists}.
Choosing arbitrarily a (germ of) smooth boundary collar for $M^{\varphi\mhyphen\trefl}/\varphi(G)$ provides a lift of this smooth structure to $M^{\varphi\mhyphen\trefl}$, and the rest of the proof is the same.
\end{proof}

\subsection{Taming the reflection locus}

Let $F^\refl_\wild:=F^\refl\setminus F^\refl_\tame$ denote the closed subset where $F^\refl$ is wildly embedded inside $M^\refl$.

\begin{proposition}\label{tamerefl}
Every continuous action $\varphi:G\acts M$ of a finite group on a smooth three-manifold is a uniform limit of actions $\tilde\varphi:G\acts M$ for which $F^{\tilde\varphi\mhyphen\refl}_\wild$ is contained in the $1$-skeleton of a $G$-invariant triangulation of $F^{\tilde\varphi\mhyphen\refl}$ and $M^{\varphi\mhyphen\refl}=M^{\tilde\varphi\mhyphen\refl}$.
Moreover, we may take $\tilde\varphi=\varphi$ except over a neighborhood of $F^{\varphi\mhyphen\refl}_\wild$ inside $M^{\varphi\mhyphen\refl}$.
\end{proposition}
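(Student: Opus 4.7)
The plan is to conjugate $\varphi$ by a homeomorphism $\alpha:M\to M$ close to the identity and supported in a small $G$-invariant open neighborhood of $F^{\varphi\mhyphen\refl}_\wild$ inside $M^{\varphi\mhyphen\refl}$, chosen so that $\alpha^{-1}(F^{\varphi\mhyphen\refl})\subseteq M$ becomes a tamely embedded surface; setting $\tilde\varphi:=\alpha^{-1}\varphi\alpha$ then gives the desired action.

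To set up, I would first equip $F:=F^{\varphi\mhyphen\refl}$ with a $G$-invariant smooth structure and a $G$-invariant smooth triangulation $T$. The surface $F$ is a topological $2$-manifold carrying a continuous $G$-action, so the classical two-dimensional smoothing of finite group actions cited in the introduction provides such a smooth structure, and then standard methods give a $G$-invariant triangulation. Next, I would apply Theorem \ref{tamereembedding} to the inclusion $\iota:F\hookrightarrow M$: since $F^{\varphi\mhyphen\refl}_\wild$ is closed and $G$-invariant and $\iota$ is tame on the $G$-invariant open complement $F^{\varphi\mhyphen\refl}_\tame$, I can fix a $G$-invariant open neighborhood $U\subseteq F$ of $F^{\varphi\mhyphen\refl}_\wild$ with $\overline U\setminus F^{\varphi\mhyphen\refl}_\wild\subseteq F^{\varphi\mhyphen\refl}_\tame$ and obtain a tame proper embedding $\tilde\iota:F\hookrightarrow M$ uniformly close to $\iota$ with $\tilde\iota=\iota$ on $F\setminus U$.

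The key technical step is to upgrade $\tilde\iota$ to an ambient homeomorphism $\alpha:M\to M$ close to the identity, supported in a small $G$-invariant open neighborhood $V\subseteq M^{\varphi\mhyphen\refl}$ of $F^{\varphi\mhyphen\refl}_\wild$, satisfying $\alpha\circ\tilde\iota=\iota$. To construct $\alpha$, I would combine Craggs' uniqueness (Theorem \ref{tamereembeddingunique}) with topological ambient isotopy extension for properly embedded tame surfaces in three-manifolds: the former supplies, via comparison with further tame approximations provided by Theorem \ref{tamereembedding}, an isotopy of embeddings carrying $\tilde\iota$ to $\iota$ inside a small uniform neighborhood, and the latter realizes this isotopy as an ambient isotopy with support near $\iota(U)$. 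This ambient extension with controlled support is the main obstacle I expect to face.

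Setting $\tilde\varphi:=\alpha^{-1}\varphi\alpha$ then gives a continuous action close to $\varphi$ in the uniform topology. The identity $\alpha\circ\tilde\iota=\iota$ together with $\varphi$-equivariance of $\iota$ forces $\tilde\varphi(g)\circ\tilde\iota=\tilde\iota\circ\varphi(g)$, so $F^{\tilde\varphi\mhyphen\refl}=\alpha^{-1}(F^{\varphi\mhyphen\refl})=\tilde\iota(F)$ and the pushforward $\tilde\iota_\ast T$ is a $\tilde\varphi$-invariant triangulation of $F^{\tilde\varphi\mhyphen\refl}$ all of whose closed simplices are tamely embedded in $M$. In particular $F^{\tilde\varphi\mhyphen\refl}_\wild=\varnothing$ is trivially contained in the $1$-skeleton. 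Since $\alpha$ is supported inside $M^{\varphi\mhyphen\refl}$, we have $M^{\tilde\varphi\mhyphen\refl}=\alpha^{-1}(M^{\varphi\mhyphen\refl})=M^{\varphi\mhyphen\refl}$ and $\tilde\varphi=\varphi$ off a neighborhood of $F^{\varphi\mhyphen\refl}_\wild$ in $M^{\varphi\mhyphen\refl}$, as required.
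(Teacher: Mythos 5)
Your proposal hinges on constructing an ambient homeomorphism $\alpha:M\to M$ satisfying $\alpha\circ\tilde\iota=\iota$, i.e.\ carrying the tame surface $\tilde\iota(F)$ onto the wild surface $F=\iota(F)$. No such $\alpha$ exists: tameness (equivalently, local flatness) of a surface in a three-manifold is invariant under ambient homeomorphism, so if $\tilde\iota(F)$ is tame and $\alpha$ is a homeomorphism of $M$, then $\alpha(\tilde\iota(F))$ is again tame, contradicting the assumption that $F^{\varphi\mhyphen\refl}_\wild\neq\varnothing$. Note also that Craggs' Theorem \ref{tamereembeddingunique} is being misapplied: it produces isotopies between two \emph{tame} approximations $\iota_1,\iota_2$ of $\iota$, not between a tame approximation and the possibly-wild $\iota$ itself.

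More structurally, the whole strategy of conjugation by a single homeomorphism cannot work. If $\tilde\varphi=\alpha^{-1}\varphi\alpha$, then $F^{\tilde\varphi\mhyphen\refl}=\alpha^{-1}(F^{\varphi\mhyphen\refl})$ and hence $F^{\tilde\varphi\mhyphen\refl}_\wild=\alpha^{-1}(F^{\varphi\mhyphen\refl}_\wild)$, so the wild locus of the conjugated action is a homeomorphic copy of the original wild locus; in particular, if $F^{\varphi\mhyphen\refl}_\wild$ has nonempty interior, so does $F^{\tilde\varphi\mhyphen\refl}_\wild$, and it cannot be contained in any $1$-skeleton. The paper's proof avoids this by \emph{not} conjugating: for each $2$-simplex $U\subseteq F^{\varphi\mhyphen\refl}$ with stabilizing involution $\sigma$, it builds a genuinely new involution $\tilde\sigma$ whose fixed set is the tame surface $\tilde\iota(U)$, by prescribing $\tilde\sigma=h_1^{-1}\circ\sigma$ on one side of $\tilde\iota(U)$ and $\tilde\sigma=\sigma\circ h_1$ on the other (where $h_1$ is the Craggs isotopy between the two tame approximations $\tilde\iota$ and $\sigma\circ\tilde\iota$). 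This piecewise definition, which crucially exploits the fact that $\sigma$ exchanges the two sides, produces a $\tilde\sigma$ that is \emph{not} conjugate to $\sigma$ near $U$. Gluing these local modifications only over open $2$-simplices (and not attempting to extend over the $1$-skeleton, which would require a relative version of Craggs' theorem) yields exactly the statement of the proposition.
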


\begin{proof}
Fix a very fine $G$-equivariant triangulation of $F^{\varphi\mhyphen\refl}$ (note that $G$ acts with constant stabilizer on $F^{\varphi\mhyphen\refl}$ and that $F^\refl_\wild$ is $G$-invariant).
It suffices to describe how to modify $\varphi$ in an arbitrarily small neighborhood of each $G$-orbit of open $2$-simplices intersecting the wild locus (then just do all of these modifications simultaneously).
Note that $F^{\varphi\mhyphen\refl}\subseteq M$ is not generally a closed subset, and hence there may be infinitely many such $2$-simplices in any neighborhood of some points of $M$, but that this is not an issue.

Let a $G$-orbit of $2$-simplices inside $F^{\varphi\mhyphen\refl}$ be given.
Fix an open $2$-simplex $U\subseteq F^{\varphi\mhyphen\refl}$ in this orbit, with stabilizer an involution $\sigma\in G$.
It suffices to modify the action of $\sigma$ in a neighborhood of $U$ (choosing coset representatives for $G/\sigma$ as in the proof of Lemma \ref{connectedeffective} extends this to a modification of the action of $G$ near the union of translates of $U$).

To find the desired new action of $\sigma$ locally near $U$, we follow the argument of Craggs \cite[Theorem 3.1]{craggs}.
Note that, as a consequence of Alexander duality (see Lemma \ref{alexanderduality}), $U$ divides $M$ locally into two `sides' and $\sigma$ exchanges these two sides since it reverses orientation near $U$.
Let $\iota:U\hookrightarrow M$ be the identity map embedding, and let $\tilde\iota:U\hookrightarrow M$ be a tame reembedding as produced by Bing's Theorem \ref{tamereembedding}.
Now $\sigma\circ\tilde\iota:U\to M$ is another tame reembedding, so by Craggs' Theorem \ref{tamereembeddingunique}, there is a uniformly small ambient isotopy $\{h_t:M\to M\}_{t\in[0,1]}$ supported near $U$ from $h_0=\id$ to a homeomorphism $h_1$ such that $h_1\circ\tilde\iota=\sigma\circ\tilde\iota$.
Now on one side of $\tilde\iota(U)$, we define $\tilde\sigma:=h_1^{-1}\circ\sigma$, and on the other side we take its inverse $\sigma\circ h_1$.
This is a new involution $\tilde\sigma$, coinciding with $\sigma$ outside a neighborhood of $U$, and with fixed set $\tilde\iota(U)$ which is by definition tame.
\end{proof}

\begin{remark}
Given a relative version of Craggs' Theorem \ref{tamereembeddingunique}, in the sense that the isotopies could be made to be constant over a locus where $\iota=\iota_1=\iota_2$ (this may even be proved in \cite{craggsisotopy}), we could iterate the process in the above proof over a neighborhood of the $1$-simplices and then the $0$-simplices, thus taming the entire $F^\refl$.
This is a moot point, however, since the weaker statement of Proposition \ref{tamerefl} is all that is needed to prove Theorem \ref{main}.
\end{remark}

\subsection{Smoothing over the remainder}

It is here that the results of \S\ref{secnielsen} and \S\ref{sectionlattice} are put to use.

\begin{theorem}\label{smoothoverremainder}
Every generically free continuous action $\varphi:G\acts M$ of a finite group on a smooth three-manifold which is smooth over $M^{\varphi\mhyphen\refl}$ minus the $1$-skeleton of a $G$-invariant triangulation of $F^{\varphi\mhyphen\refl}$ is a uniform limit of smooth actions $\tilde\varphi:G\acts M$.
If $\varphi$ is smooth over $\Nbd K$ for $K\subseteq M$ closed and $\varphi(G)$-invariant, then we may take $\tilde\varphi=\varphi$ over $\Nbd K$.
\end{theorem}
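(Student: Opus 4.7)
The plan executes the third step of the introduction's proof sketch, reducing to the Nielsen realization Theorem \ref{nielsenall}. Let $B\subseteq M$ denote the locus over which $\varphi$ is not yet smooth, i.e., $M\setminus M^\refl$ together with the $1$-skeleton of the given triangulation of $F^\refl$; by Lemma \ref{reflcomplement}, $B$ has covering dimension at most $1$. First, choose a small $G$-invariant closed neighborhood $M_0\subseteq M\setminus\Nbd K$ of $B$ with smooth boundary, together with a $G$-equivariant finite open cover $M_0=\bigcup_iU_i$ with each $U_i$ contained in a small coordinate ball, indices permuted by $G$, and $U_i\cap U_j\cap U_k=\varnothing$ for distinct $i,j,k$. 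Such data exist because $B$ has dimension at most $1$, and taking all diameters small will, in the end, force uniform closeness of $\tilde\varphi$ to $\varphi$: for $x\in U_i\cap M_0$, both $\varphi(g)(x)$ and $\tilde\varphi(g)(x)$ will lie within $\overline{U_{g(i)}}$.

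Second, for each pair $\{i,j\}$ with $U_i\cap U_j\ne\varnothing$, invoke the lattice machinery of \S\ref{sectionlatticethreemanifold} to produce a nonempty lattice of isotopy classes of incompressible surfaces in $U_i\cap U_j$ separating the $U_i$-end from the $U_j$-end. By Corollary \ref{latticehomeoact}, the pair stabilizer $G_{\{i,j\}}$ acts on this lattice, and taking the join of any orbit yields a $G_{\{i,j\}}$-fixed isotopy class. Spreading through $G$-orbits of pairs gives a $G$-equivariant-up-to-isotopy family $[F_{ij}]$ satisfying $\varphi(g)([F_{ij}])=[F_{g(i),g(j)}]$. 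Apply Bing's Theorem \ref{tamereembedding} and Craggs' Theorem \ref{tamereembeddingunique} to pick tame representatives $F_{ij}$, arranged to be pairwise transverse, meeting $\partial M_0$ transversely, and strictly $G$-equivariant (the latter by iteratively averaging the ambient isotopies supplied by Theorem \ref{tamereembeddingunique} over $G_{\{i,j\}}$). The $F_{ij}$ then decompose $M_0$ into compact smooth irreducible pieces $N_i\subseteq U_i$ (additional $G$-equivariant cuts along $2$-spheres may be invoked to ensure irreducibility), each of which embeds into $\RR^3$ since $U_i$ does.

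Third, $\varphi$ induces a homotopy action by homotopy equivalences $G\hacts\bigsqcup_iN_i$: the data for $g\in G$ is $\varphi(g)|_{N_i}$ composed with any ambient isotopy from $\varphi(g)(F_{ij})$ to $F_{g(i),g(j)}$. Apply Theorem \ref{nielsenall} piece-by-piece to realize this as a strict smooth $G$-action, performing the realization relative to the boundary: first realize strictly the induced action on each surface $F_{ij}$ via Proposition \ref{nielsensurfacekpione}, then realize the action on each $N_i$ relative to this already-strict boundary action via Corollary \ref{relsurfaceboundary}, so that adjacent pieces glue consistently. Components $N_i\cong B^3$ (not covered by Theorem \ref{nielsenall}) are handled separately by the Alexander trick. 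Reassembling and extending by $\varphi$ over $M\setminus M_0^\circ$ yields the desired $\tilde\varphi$. The main difficulty is coordinating the relative-boundary Nielsen realization coherently across all $G$-orbits of pieces $N_i$ so that the strict actions on adjacent pieces induce the same strict action on their shared boundary surface; Corollary \ref{relsurfaceboundary} is tailored precisely to enable this rel-boundary passage, after which uniform closeness of $\tilde\varphi$ to $\varphi$ follows automatically from the smallness of $M_0$ and the $U_i$.
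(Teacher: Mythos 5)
Your high-level architecture matches the paper's: cover a small $G$-invariant neighborhood $M_0$ of the bad locus by small sets $U_i$ with nerve of dimension $\leq 1$, use the lattice of incompressible codimension-zero pieces to find $G$-invariant-up-to-isotopy separating surfaces in the $U_i\cap U_j$, cut $M_0$ into pieces $N_i$, obtain a homotopy action, and realize it strictly piece-by-piece via Theorem \ref{nielsenall} and Corollary \ref{relsurfaceboundary}, handling $B^3$ pieces separately. However, there are several genuine gaps in the details.

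First, the lattice $\sL(U_i\cap U_j;A_{ij})$ requires the ambient three-manifold to be ($P^2$-)irreducible, and this is \emph{not} automatic even if each $U_i$ sits inside a small coordinate ball (the intersection $U_i\cap U_j$ and, before that, the individual $U_i$ can acquire handles). Your parenthetical ``additional $G$-equivariant cuts along $2$-spheres may be invoked to ensure irreducibility'' targets the pieces $N_i$, not the sets $U_i\cap U_j$ where the lattice argument actually takes place, and making such cuts $G$-equivariantly is itself nontrivial. The paper resolves this by carefully arranging the construction so that each $U_i$ (hence $U_i\cap U_j$) is \emph{saturated} (complement connected), which forces irreducibility by Alexander's theorem. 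Also, the lattice $\sL(M;A)$ presupposes a choice of boundary pattern $A$, which itself must be made $G$-equivariantly; the paper does this in two stages, first finding $G$-invariant $\A_{ij}\in\sL(\partial(U_i\cap U_j))$ (a lattice argument on the boundary surface) and only then passing to $\sL(U_i\cap U_j;A_{ij})$. Your proposal jumps straight to the three-dimensional lattice.

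Second, your claim that one can ``arrange'' the tame representatives $F_{ij}$ to be \emph{strictly} $G$-equivariant by ``iteratively averaging the ambient isotopies'' is unjustified and almost certainly not achievable: ambient isotopies do not have a natural averaging operation, and $\varphi(g)(F_{ij})$ need not even be tame since $\varphi(g)$ is only a homeomorphism. The paper explicitly does \emph{not} make the $B_{ij}$ globally $G$-equivariant, only $G$-invariant near $\partial(U_i\cap U_j)$, and absorbs the failure of global equivariance into the passage from the strict action $\varphi$ to a homotopy action $\bar\varphi$. Related to this, you assert that an ambient isotopy exists from $\varphi(g)(F_{ij})$ to $F_{g(i),g(j)}$, but this requires first taming the continuous image $\varphi(g)(F_{ij})$; the paper avoids the issue by first applying Bing--Moise to approximate $\varphi^0$ by diffeomorphisms on each $U_i\cap U_j$, paying the price that the higher coherences $\varphi^k$ become merely homotopy equivalences. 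This preliminary smoothing step, which bootstraps the well-definedness of the isotopy classes and the cutting procedure, is missing from your proposal.
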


\begin{proof}
Let $X\subseteq M$ be the (necessarily closed and $G$-invariant) locus where $\varphi$ fails to be smooth.
By hypothesis, $X$ is contained within $M\setminus M^{\varphi\mhyphen\refl}$ union the $1$-skeleton of a $G$-invariant triangulation of $F^{\varphi\mhyphen\refl}$.
Appealing to Lemma \ref{reflcomplement} on the structure of $M\setminus M^\refl$, we conclude that $X/G$ has Lebesgue covering dimension at most $1$.

Let $M_0\subseteq M$ be a small $G$-invariant closed neighborhood of $X$ with smooth boundary, and let $M_0=\bigcup_iU_i$ be a locally finite $G$-equivariant (i.e.\ the action of $G$ permutes the $U_i$) open cover by small open subsets of $M_0$, whose nerve has dimension at most $1$ (i.e.\ all triple intersections $U_i\cap U_j\cap U_k$ for distinct $i,j,k$ are empty) and such that $G$ does not exchange any pair $(i,j)$ with $U_i\cap U_j\ne\varnothing$ (i.e.\ the action of $G$ on the nerve of the cover does not invert any edge).

\newcommand\Uz{(0,0)circle(2)}
\newcommand\Ua{(6,0)circle(1.7)}
\newcommand\Ub{(4,0)ellipse(1.8 and 1.4)}
\newcommand\Uc{(2,0)ellipse(1 and 1.5)}
\newcommand\Ud{(-2,-2)circle(1.4)}
\newcommand\Ue{(-3.3,-3.7)circle(1.3)}
\newcommand\Uf{(0,2.4)ellipse(1.8 and 1)}
\newcommand\Ug{(0,3.5)ellipse(1.9 and 0.8)}
\newcommand\Uh{(0,4.8)ellipse(1.8 and 0.7)}
\newcommand\Udasheddotted{
\draw[thick,dotted]\Uz;
\draw[thick,dashed]\Uc;
\draw[thick,dotted]\Ub;
\draw[thick,dotted]\Ua;
\draw[thick,dashed]\Uf;
\draw[thick,dotted]\Ug;
\draw[thick,dashed]\Uh;
\draw[thick,dashed]\Ud;
\draw[thick,dotted]\Ue;}
\newcommand\mboundone{(1,5)to[out=-90,in=90](0.6,4)to[out=-90,in=90](1.5,3.7)to[out=-90,in=90](1.4,3)to[out=-90,in=90](1.5,2.5)to[out=-90,in=90](1,1.3)to[out=-90,in=180](1.3,1)to[out=0,in=180](2,0.9)to[out=0,in=180](4,0.8)to[out=0,in=180](6,1)}
\newcommand\mboundtwo{(6,-1)to[out=-180,in=0](4,-0.8)to[out=-180,in=0](2,-0.9)to[out=-180,in=20](1.3,-1)to[out=200,in=20](0.5,-1.6)to[out=200,in=45](-0.7,-1.7)to[out=225,in=45](-1.3,-2.7)to[out=225,in=45](-2.2,-3.3)to[out=225,in=45](-2.5,-3.8)}
\newcommand\mboundthree{(-3.5,-2.8)to[out=45,in=225](-3.1,-2.4)to[out=45,in=225](-2.7,-1.3)to[out=45,in=240](-1.7,-0.7)to[out=60,in=240](-1.6,0.7)to[out=60,in=-90](-0.9,1.7)to[out=90,in=-90](-1.3,3.5)to[out=90,in=-90](-0.6,4.3)to[out=90,in=-90](-1.2,5)}

%%%%%%%%%%% IMPORTANT NOTE TO COPYEDITORS %%%%%%%%%%%%%
%%%%%%%%%%%%%%%%%%%%%%%%%%%%%%%%%%%%%%%%%%%%%%%%%%%%%%%
%%% If any of the figures needs to be resized, by far the simplest way to do this is
%%% to adjust the "scale" parameter in the \begin{tikzpicture} command.  That is,
%%% \begin{tikzpicture}[scale=\textwidth/22cm]
%%% should be replaced with
%%% \begin{tikzpicture}[scale=\textwidth/???cm]
%%% where ??? is whatever number you want to make the figure(s) the correct size.
%%% A *bigger* number means the figure will be *smaller*.  This number should preferrably
%%% be the same for every figure (so that they are all the same size).

\begin{figure}[htbp]
\centering
\begin{tikzpicture}[scale=\textwidth/22cm]
%%%%%%%%%%%%%%%%% background grid
%\draw[dotted](0,-5)--(0,5);\draw[dotted](1,-5)--(1,5);\draw[dotted](2,-5)--(2,5);\draw[dotted](3,-5)--(3,5);\draw[dotted](4,-5)--(4,5);\draw[dotted](5,-5)--(5,5);\draw[dotted](-1,-5)--(-1,5);\draw[dotted](-2,-5)--(-2,5);\draw[dotted](-3,-5)--(-3,5);\draw[dotted](-4,-5)--(-4,5);\draw[dotted](-5,-5)--(-5,5);\draw[dotted](-5,0)--(5,0);\draw[dotted](-5,1)--(5,1);\draw[dotted](-5,2)--(5,2);\draw[dotted](-5,3)--(5,3);\draw[dotted](-5,4)--(5,4);\draw[dotted](-5,5)--(5,5);\draw[dotted](-5,-1)--(5,-1);\draw[dotted](-5,-2)--(5,-2);\draw[dotted](-5,-3)--(5,-3);\draw[dotted](-5,-4)--(5,-4);\draw[dotted](-5,-5)--(5,-5);\node at(0,0){$\bullet$};
\draw(0,0)--(0,1)--(0.2,1.2)--(0,2)--(-0.2,1.8)--(-0.1,2.3)--(0.5,1.5)--(0.8,1.8)--(0.5,1.8)--(0.8,2.1)--(0.5,2.1)--(0.8,2.4)--(0.5,2.4)--(1.1,3)--(0.5,2.7)--(0,3)--(0.7,3.2)--(0,3.4)--(0,4.5)--(0,5.1);
\draw(0,0)--(1,0.3)--(0.7,0)--(1.2,-0.1)--(1.1,0)--(1.4,0.3)--(1.7,0)--(1.5,-0.2)--(1.5,-0.4)--(1.7,-0.6)--(2.3,0)--(2.7,0)--(2.5,-0.2)--(2.3,-0.7)--(5,0)--(5.5,0)--(6.1,0);
\draw(0,0)--(-1,-0.4)--(-0.6,-0.7)--(-0.6,-0.4)--(-0.2,-0.7)--(-0.9,-1.6)--(-1.2,-1.4)--(-2,-2)--(-3,-3.4)--(-3.3,-3.7);
\node at(6.6,0){$X$};
\begin{scope}
\clip(6,5)--(-3.7,5)--(-3.7,-4.1)--(6,-4.1)--cycle;
\Udasheddotted
\node at(2,-2){$U_i'$};
\end{scope}
\end{tikzpicture}
\quad
\begin{tikzpicture}[scale=\textwidth/22cm]
%\Xcore
\begin{scope}
\clip\mboundone--\mboundtwo--\mboundthree--cycle;
\Udasheddotted
\end{scope}
\draw\mboundone;
\draw\mboundtwo;
\draw\mboundthree;
\node at(3,3){$M_0$};
\end{tikzpicture}
\caption{Left: The locus $X$ and the open covering $X\subseteq\bigcup_iU_i'$.  Right: the neighborhood $X\subseteq M_0\subseteq\bigcup_iU_i'$ and the covering $M_0=\bigcup_iU_i$ defined by $U_i:=U_i'\cap M_0$.}\label{figurecoverconstruction}
\end{figure}
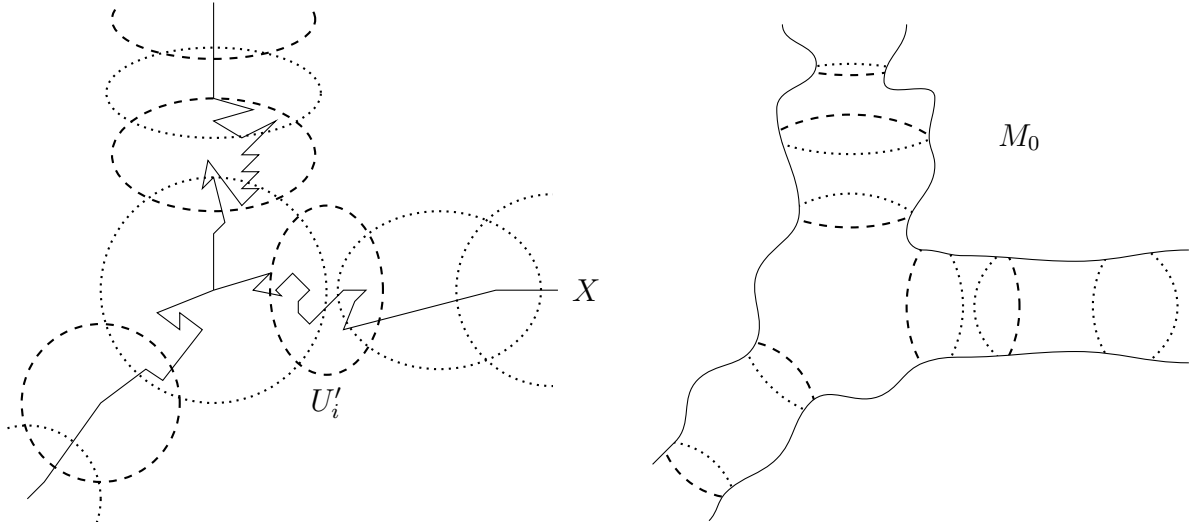

To construct $M_0$ and this open cover, argue as follows (see Figure \ref{figurecoverconstruction}).
Choose a very fine locally finite closed cover of $X/G$ whose nerve has dimension at most $1$, and pull it back to $X$.
This produces a $G$-invariant cover of $X$ (i.e.\ each set in the cover is stabilized by $G$).
By further breaking up each of these inverse images into finitely many disjoint pieces permuted by $G$, we obtain an arbitrarily fine locally finite $G$-equivariant closed cover $X=\bigcup_iV_i$ with nerve of dimension at most $1$.
There may be some bad pairs $(i,j)$ with $V_i\cap V_j\ne\varnothing$ and $G$ exchanging $i$ and $j$.
In this case, we may add a small neighborhood of $V_i\cap V_j$ to the cover and shrink $V_i$ and $V_j$ accordingly (the effect of this operation on the nerve of the covering is to add a vertex at the middle of the flipped edge $(i,j)$).
This operation takes place in a small neighborhood of $V_i\cap V_j$, so we may simply do it to all bad pairs simultaneously.
Now choose open neighborhoods $U_i'$ of $V_i$ inside $M$ such that $U_i'\cap U_j'\ne\varnothing$ only if $V_i\cap V_j\ne\varnothing$.
Finally, pick any $G$-invariant $M_0\subseteq M$, closed with smooth boundary, containing $X$, and contained inside $\bigcup_iU_i'$, and set $U_i:=M_0\cap U_i'$.

So that we may apply the results of \S\ref{sectionlattice}, we further specify the construction of $M_0$ and the open cover $M_0=\bigcup_iU_i$ as follows.
Let us call a bounded open subset $U\subseteq\RR^3$ \emph{saturated} iff its complement is connected (equivalently $H_2(U)=0$).
Every bounded open set $U\subseteq\RR^3$ is contained in a unique minimal bounded saturated $U^+\subseteq\RR^3$, obtained by adding to $U$ the bounded component of $\RR^3\setminus F$ for every embedded surface $F\subseteq U$.
If $U$ is saturated, then it is irreducible by Alexander's theorem \cite{alexander}.
Note that if $U$ and $V$ are both saturated, then so is their intersection $U\cap V$.
The notion of being saturated also makes sense (and the above discussion continues to apply) for open subsets of $M$ of small diameter.

Let us now argue that we can choose the $U_i$ (and hence also their pairwise intersections $U_i\cap U_j$) to be saturated (hence, in particular, irreducible).
First, fix an open covering of $X$ by small open balls $B_\alpha\subseteq M$.
Let us require that the covering $V_i$ of $X$ is chosen such that each $V_i$ is contained within some $B_\alpha$.
Since $X$ has covering dimension at most $1$, so does each $V_i$, implying that $\check H^2(V_i)=0$, and hence removing $V_i$ does not disconnect $B_\alpha$.
It follows that $V_i$ has arbitrarily small neighborhoods $U_i'$ (contained in a compact subset of the same $B_\alpha$) whose removal does not disconnect $B_\alpha$, i.e.\ $U_i'$ is saturated.
We now turn to the choice of $M_0$.
Beginning with an arbitrary choice of $M_0$ as above (i.e.\ $G$-invariant, closed with smooth boundary, containing $X$, and contained inside $\bigcup_iU_i'$), consider those boundary components $F$ of $M_0$ which lie entirely inside a given $U_i'$.
Since every $U_i'$ is saturated, we may add to $M_0$ the compact region inside $U_i'$ bounded by each such $F$, and the condition that $M_0\subseteq\bigcup_iU_i'$ is preserved (as are the other conditions).
It now follows that $U_i=U_i'\cap M_0$ is also saturated: the contrary would be the existence of a closed surface $F\subseteq U_i'\cap M_0$ bounding a compact region in $B_\alpha$ not both entirely inside $U_i'$ and inside $M_0$, however by construction these do not exist.

We now consider the lattices $\sL(\partial(U_i\cap U_j))$ (recall \S\ref{sectionlatticesurface}).
We claim that there exists a $G$-invariant collection of elements $\A_{ij}\in\sL(\partial(U_i\cap U_j))$ with $\A_{ij}=\A_{ji}^\complement$ such that $\A_{ij}$ contains the $U_i$ end of $\partial(U_i\cap U_j)$ (note that the boundary $(\overline{U_i\cap U_j})\setminus(U_i\cap U_j)$ is the open disjoint union of its intersection with $U_i$ and its intersection with $U_j$).
To see this, start with any not necessarily $G$-invariant collection of $\A_{ij}=\A_{ji}^\complement$.
Considering all $G$-translates, we get a collection of finite multisets $S_{ij}\subseteq\sL(\partial(U_i\cap U_j))$.
Now choose a $G$-invariant preferred order $(i,j)$ for every unordered pair of indices with $U_i\cap U_j$ non-empty (this is possible since $G$ doesn't swap any such pair of indices).
For $(i,j)$ in this preferred order, define $\A_{ij}$ to be the least upper bound of $S_{ij}$ (and $\A_{ji}$ to be its complement, i.e.\ the greatest lower bound of $S_{ji}$).
This is the desired collection $\A_{ij}$.
We now fix representatives $A_{ij}\subseteq\partial(U_i\cap U_j)$ of $\A_{ij}$ which satisfy $A_{ij}=A_{ji}^\complement$ and $G$-invariance on the nose rather than only up to isotopy (for instance, we could choose $\partial A_{ij}$ to be geodesics in a $G$-invariant hyperbolic metric on $\partial(U_i\cap U_j)$).

We now consider the lattices $\sL(U_i\cap U_j;A_{ij})$ (recall \S\ref{sectionlatticethreemanifold}), and we claim that there exists a $G$-invariant collection of elements $\B_{ij}\in\sL(U_i\cap U_j;A_{ij})$ with $\B_{ij}=\B_{ji}^\complement$ such that $\B_{ij}$ contains the $U_i$ end of $U_i\cap U_j$ (recall from Corollary \ref{latticehomeoact} that $G$ does indeed act on these lattices).
Indeed, such $\B_{ij}$ may be obtained using the same procedure used above to construct the $\A_{ij}$.
Fix representatives $B_{ij}\subseteq U_i\cap U_j$ with $B_{ij}=B_{ji}^\complement$ which are $G$-invariant in a neighborhood of $\partial(U_i\cap U_j)$ (but \emph{not} necessarily globally $G$-invariant).

\newcommand\patha{(5,-1.5)to[out=90,in=-90](4.7,-0.4)to[out=90,in=-90](5.1,0.2)to[out=90,in=-90](5,1.5)}
\newcommand\pathb{(2.7,-1)to[out=90,in=-90](2.5,0.2)to[out=90,in=-90](2.8,1.2)}
\newcommand\pathc{(1.5,-1.5)to[out=90,in=-90](1.7,0)to[out=90,in=-90](1.4,1.5)}
\newcommand\pathd{(-0.7,-1.9)to[out=90,in=0](-1.2,-1.2)to[out=180,in=-45](-2,-0.5)}
\newcommand\pathe{(-2.1,-3.4)to[out=135,in=-45](-2.5,-3)to[out=135,in=-45](-3.5,-2.3)}
\newcommand\pathf{(1.5,1.6)to[out=180,in=0](0,1.7)to[out=180,in=0](-1.5,1.6)}
\newcommand\pathg{(2,2.9)to[out=180,in=0](0.3,3.2)to[out=180,in=0](-0.2,2.9)to[out=180,in=0](-2,3)}
\newcommand\pathh{(1,4.2)to[out=180,in=0](-1,4.2)}

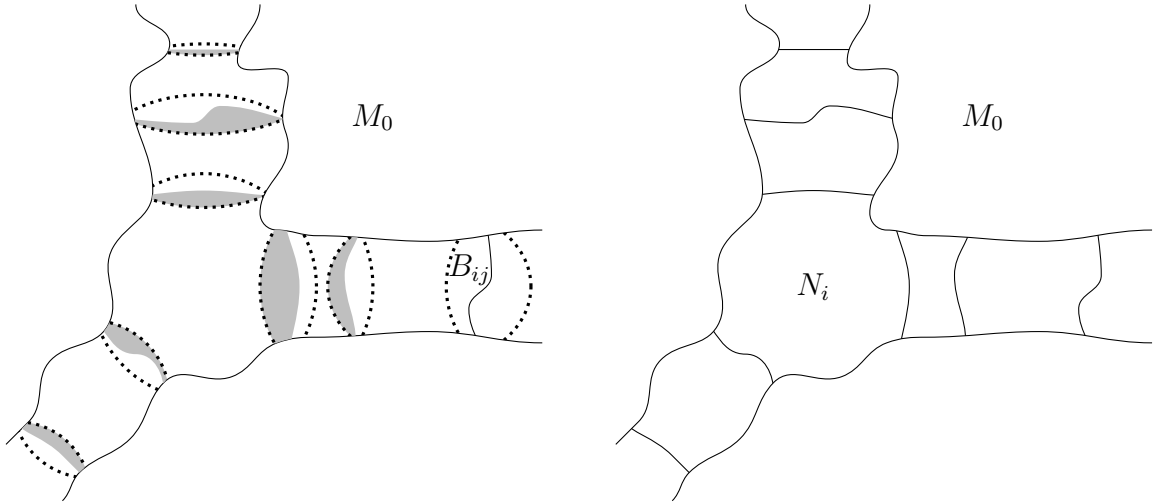
\begin{figure}[htbp]
\centering
\begin{tikzpicture}[scale=\textwidth/22cm]
\draw\mboundone;
\draw\mboundtwo;
\draw\mboundthree;
\node at(3,3){$M_0$};
\begin{scope}
\clip\mboundone--\mboundtwo--\mboundthree--cycle;
%\begin{scope}\clip\Ua;\path[fill=lightgray]\patha--(0,0)--cycle;\end{scope}
\begin{scope}\clip\Ua;\draw\patha;\end{scope}\node at(4.71,0.3){$B_{ij}$};
\begin{scope}\clip\Ub;\path[fill=lightgray]\pathb--(0,0)--cycle;\end{scope}
\begin{scope}\clip\Uc;\path[fill=lightgray]\pathc--(0,0)--cycle;\end{scope}
\begin{scope}\clip\Ud;\path[fill=lightgray]\pathd--(0,0)--cycle;\end{scope}
\begin{scope}\clip\Ue;\path[fill=lightgray]\pathe--(0,0)--cycle;\end{scope}
\begin{scope}\clip\Uf;\path[fill=lightgray]\pathf--(0,0)--cycle;\end{scope}
\begin{scope}\clip\Ug;\path[fill=lightgray]\pathg--(0,0)--cycle;\end{scope}
\begin{scope}\clip\Uh;\path[fill=lightgray]\pathh--(0,0)--cycle;\end{scope}
\draw[very thick, dotted]\Uz;
\draw[very thick, dotted]\Ua;
\draw[very thick, dotted]\Ub;
\draw[very thick, dotted]\Uc;
\draw[very thick, dotted]\Ud;
\draw[very thick, dotted]\Ue;
\draw[very thick, dotted]\Uf;
\draw[very thick, dotted]\Ug;
\draw[very thick, dotted]\Uh;
\end{scope}
\end{tikzpicture}
\qquad
\begin{tikzpicture}[scale=\textwidth/22cm]
\begin{scope}
\clip\mboundone--\mboundtwo--\mboundthree--cycle;
\draw\patha;
\draw\pathb;
\draw\pathc;
\draw\pathd;
\draw\pathe;
\draw\pathf;
\draw\pathg;
\draw\pathh;
\end{scope}
\node at(0,0){$N_i$};
\draw\mboundone;
\draw\mboundtwo;
\draw\mboundthree;
\node at(3,3){$M_0$};
\end{tikzpicture}
\caption{Left: the submanifolds $B_{ij}\subseteq U_i\cap U_j$.  Right: the resulting partition of $M_0$ into submanifolds $N_i$.}\label{figurepartition}
\end{figure}

We now consider the partition $M_0=\bigcup_iN_i$ where $N_i:=U_i\setminus\bigcup_jB_{ji}^\circ$ (informally, we cut $M_0$ along $\partial B_{ij}$; see Figure \ref{figurepartition}), and we argue that the given strict action $\varphi:G\acts M_0$ can be deformed (over compact subsets of $\bigsqcup_{i,j}U_i\cap U_j$ and away from $\partial M_0$) to a homotopy action by homotopy equivalences $\bar\varphi:G\hacts M_0$ which preserves the partition $M_0=\bigcup_iN_i$ and agrees with $\varphi$ near $\partial M_0$.
In the proof of Theorem \ref{nielsenall}, we cut a homotopy action along the tori of the JSJ decomposition, and we will use a similar strategy here.
First, use Bing--Moise to approximate $\varphi^0$ by diffeomorphisms on the pairwise intersections $U_i\cap U_j$.
These approximating diffeomorphisms are homotopic (via a small homotopy) to the original $\varphi^0$, so we may deform $\varphi$ to obtain $\bar\varphi$ (coinciding with $\varphi$ away from the $U_i\cap U_j$) for which $\bar\varphi^0$ are diffeomorphisms on $U_i\cap U_j$ (note, however, that this comes at the cost that the higher components of $\bar\varphi$ now may only be homotopy equivalences on $U_i\cap U_j$ rather than homeomorphisms).
Now we may further deform $\bar\varphi$ inside $U_i\cap U_j$ first so that it stabilizes $\partial N_{ij}$ (using incompressibility of $\partial N_{ij}$) and then so that it restricts to a strict action on $\partial N_{ij}$ using Corollary \ref{surfacerelboundary}.
Finally, using the same argument from the proof of Theorem \ref{nielsenall}, we deform $\bar\varphi$ (relative $\partial N_{ij}$) by induction on $k\geq 1$ so that it preserves the $N_{ij}$ as well.
We thus have a homotopy action $\bar\varphi$ which coincides with $\varphi$ away from $U_i\cap U_j$ and which preserves $N_{ij}$ and acts strictly on $\partial N_{ij}$.
Hence $\bar\varphi$ determines via cutting a homotopy action by homotopy equivalences on $\bigsqcup_iN_i$ strict near the boundary.
On those components of $\bigsqcup_iN_i$ which are not $B^3$, we may use Theorem \ref{nielsenall} and Corollary \ref{relsurfaceboundary} to homotope $\bar\varphi$ rel boundary to be strict.
On those components of $\bigsqcup_iN_i$ which are $B^3$, we recall that every strict action of a finite group on $\bigsqcup\partial B^3=\bigsqcup S^2$ preserves some spherical metric \cite[Theorem 2.4]{scottgeometries}, and thus extends to a strict action on $\bigsqcup B^3$.
\end{proof}

\begin{proof}[Proof of Theorem \ref{main}]
By Lemma \ref{connectedeffective} it is enough to treat the generically free case.
Using Proposition \ref{tamerefl} we tame $F^\refl$ away from a $1$-skeleton.
Then using Proposition \ref{smoothovertrefl}, we smooth the action over $M^\trefl$.
Finally, Theorem \ref{smoothoverremainder} smooths the rest.
\end{proof}

\bibliographystyle{amsalpha}
\bibliography{tamefinite}
\addcontentsline{toc}{section}{References}

\end{document}